\newcommand{\filename}{Evolution-Anis-torus-solu6.tex}
\newcommand{\mycomment}[1]{}
\newtheorem{lemma}{LEMMA}[section]
\newtheorem{theorem}[lemma]{THEOREM}
\newtheorem{definition}[lemma]{DEFINITION}
\newtheorem{remark}[lemma]{REMARK}
\newcommand{\be}{\begin{equation}}
\newcommand{\ee}{\end{equation}}
\newcommand{\bes}{\begin{equation*}}
\newcommand{\ees}{\end{equation*}}
\newcommand{\bea}{\begin{eqnarray}}
\newcommand{\eea}{\end{eqnarray}}
\newcommand{\beas}{\begin{eqnarray*}}
\newcommand{\eeas}{\end{eqnarray*}}
 \newcommand{\nc}{\newcommand}
 \nc{\ha}{\frac{1}{2}}
 \nc{\tha}{\frac{3}{2}}
 \nc{\ov}{\overline}
 \nc{\pa}{\partial}
 \nc{\pO}{{\partial\T}}
\nc{\C}{{\mathbb{C}}}
\newcommand{\F}{{\cal F}}
 \nc{\N}{{\mathbb{N}}}
\newcommand{\R}{{\mathbb R}}
\newcommand{\T}{{\mathbb T}}
\newcommand{\Z}{\mathbb{Z}}
\newcommand{\sZ}{\sum_{\xi\in\Z^n}}
\renewcommand{\Re}{{\rm Re}}
\newcommand{\bs}{\boldsymbol}
\renewcommand{\div}{{\rm div}}
\makeatletter \@addtoreset{equation}{section}\makeatother
\renewcommand{\@oddhead}{\vbox{\hbox to\textwidth{\scriptsize %
 \hfill S.E.Mikhailov\hfill \arabic{page}\vspace{1ex}}
 \hrule
 }}
 \renewcommand{\@evenhead}{\vbox{\hbox to\textwidth{\scriptsize %
 %\hfill
 \filename\hfill NS-torus%, \timetwentyfour
 }
\hrule
 }}
 \numberwithin{equation}{section}
\begin{document}

\title%[Spatially-Periodic Solutions for Evolution Anisotropic Navier-Stokes Equations: 1. Existence]  %------- Running title --------------
{\bf
Spatially-Periodic Solutions for Evolution Anisotropic \\
Variable-Coefficient Navier-Stokes Equations: \\
I. Existence}

\author
{Sergey E. Mikhailov\footnote{
%Corresponding author:
e-mail: {\sf sergey.mikhailov@brunel.ac.uk}, 
%Phone: +44\,189\,267361,
%     Fax: +44\,189\,5269732
}\\
     Brunel University London,
     Department of Mathematics,\\
     Uxbridge, UB8 3PH, UK}

 \maketitle

%\tableofcontents

\begin{abstract}\noindent
We consider evolution (non-stationary) space-periodic solutions to the $n$-dimensional non-linear Navier-Stokes equations of anisotropic fluids with the viscosity coefficient tensor variable in space and time and satisfying the relaxed ellipticity condition. 
Employing the Galerkin algorithm with the basis constituted by the eigenfunctions of the periodic Bessel-potential operator, we prove the existence of a global weak solution.\\

\noindent{\bf Keywords}. Partial differential equations;  Evolution Navier-Stokes equations; Anisotropic Navier-Stokes; Spatially periodic solutions; Variable coefficients; Relaxed ellipticity condition.

\noindent {\bf MSC classes}:	35A1, 35B10, 35K45, 35Q30, 76D05
\end{abstract}

\section{Introduction}
Analysis of Stokes and Navier-Stokes equations is an established and active field of research in the applied mathematical analysis, see, e.g., \cite{Constantin-Foias1988, Galdi2011, Lions1969, RRS2016, Seregin2015, Sohr2001, Temam1995, Temam2001} and references therein.
In \cite{KMW2020, KMW-DCDS2021, KMW-LP2021, KMW-transv2022, Mikhailov2022, Mikhailov2023} this field has been extended to the transmission and boundary-value problems for stationary Stokes and Navier-Stokes equations of anisotropic fluids, particularly, with relaxed ellipticity condition on the viscosity tensor. 

In this paper, we consider evolution (non-stationary) space-periodic solutions in $\R^n$, $n\ge 2$, to the  Navier-Stokes equations of anisotropic fluids with the viscosity coefficient tensor variable in space coordinates and time and satisfying the relaxed ellipticity condition. 
By the Galerkin algorithm with the basis constituted by the eigenfunctions of the periodic Bessel-potential operator, the solution existence is analysed in the spaces of Banach-valued functions mapping a finite time interval to  periodic Sobolev (Bessel-potential) spaces on $n$-dimensional flat torus. 
%The solution uniqueness and regularity are planned to be considered elsewhere.

\subsection*{Anisotropic Stokes and Navier-Stokes PDE systems}

Let $n\ge 2$ be an integer, $\mathbf x\in\mathbb R^n$ denote the space coordinate vector, and $t\in\R$ be time.
Let
$\boldsymbol{\mathfrak L}$ denote the second-order differential operator represented in the component-wise divergence form as
\begin{align}
\label{L-oper}
&(\boldsymbol{\mathfrak L}{\mathbf u})_k:=
\partial _\alpha\big(a_{kj}^{\alpha \beta }E_{j\beta }({\mathbf u})\big),\ \ k=1,\ldots ,n,
\end{align}
where ${\mathbf u}\!=\!(u_1,\ldots ,u_n)^\top$, $E_{j\beta }({\mathbf u})\!:=\!\frac{1}{2}(\partial_j u_\beta +\partial _\beta u_j)$ are the entries of the symmetric part, ${\mathbb E}({\mathbf u})$, of the gradient,  $\nabla {\mathbf u}$,  in space coordinates,
and $a_{kj}^{\alpha \beta }(\mathbf x,t)$ are variable components of the tensor viscosity coefficient, cf. \cite{Duffy1978}, 
${\mathbb A}(\mathbf x,t)=\!\left\{{a_{kj}^{\alpha \beta }}(\mathbf x,t)\right\}_{1\leq i,j,\alpha ,\beta \leq n}$, depending on the space coordinate vector $\mathbf x$ and time $t$.
We also denoted $\partial_j=\dfrac{\partial}{\partial x_j}$, $\partial_t=\dfrac{\partial}{\partial t}$.
Here and further on, the Einstein  convention on summation in repeated indices from $1$ to $n$ is used unless stated otherwise.

The following symmetry conditions are assumed (see \cite[(3.1),(3.3)]{Oleinik1992}),
\begin{align}
\label{Stokes-sym}
a_{kj}^{\alpha \beta }(\mathbf x,t)=a_{\alpha j}^{k\beta }(\mathbf x,t)=a_{k\beta }^{\alpha j}(\mathbf x,t).
\end{align}

In addition, we require that tensor ${\mathbb A}$ satisfies the relaxed ellipticity condition  in terms of all {\it symmetric} matrices in ${\mathbb R}^{n\times n}$ with {\it zero matrix trace}, see \cite{KMW-DCDS2021}, \cite{KMW-LP2021}. Thus, we assume that there exists a constant $C_{\mathbb A} >0$ such that, 
\begin{align}
\label{mu}
&C_{\mathbb A}a_{kj}^{\alpha \beta }(\mathbf x,t)\zeta _{k\alpha }\zeta _{j\beta }\geq |\bs\zeta|^2\,,
\ \ 
\mbox{for a.e. } \mathbf x, t,\\
&\forall\ \bs\zeta =\{\zeta _{k\alpha }\}_{k,\alpha =1,\ldots ,n}\in {\mathbb R}^{n\times n}
\mbox{ such that }\, \bs\zeta=\bs\zeta^\top \mbox{ and }
\sum_{k=1}^n\zeta _{kk}=0,
\nonumber
\end{align}
where $|\bs\zeta |=|\bs\zeta |_F:=(\zeta _{k\alpha }\zeta _{k\alpha })^{1/2}$ is the Frobenius matrix norm and the superscript $^\top $ denotes the transpose of a  matrix.
Note that in the more common, strong ellipticity condition, inequality \eqref{mu} should be satisfied for all matrices (not only symmetric with zero trace), which makes it much more restrictive.

We assume that $a_{ij}^{\alpha \beta}\in L_\infty(\R^n\times [0,T])$, where $[0,T]$ is some finite time interval, and the tensor ${\mathbb A}$  is endowed with the norm
\begin{align}\label{TensNorm}
\|{\mathbb A}\|:=\|{\mathbb A}\|_{L_\infty (\R^n\times[0,T]),F}
:=\left| \left\{\|a_{ij}^{\alpha \beta}\|_{L_\infty (\T\times[0,T])}\right\}_{\alpha,\beta,i,j=1}^n\right|_F<\infty,
\end{align}
where $\left|\left\{b_{ij}^{\alpha \beta}\right\}_{\alpha,\beta,i,j=1}^n\right|_F:=\left(b_{ij}^{\alpha \beta}b_{ij}^{\alpha \beta}\right)^{1/2}$ is the Frobenius norm of a 4-th order tensor.

Symmetry conditions \eqref{Stokes-sym} lead to the following equivalent form of the operator $\boldsymbol{\mathfrak L}$
\begin{equation}
\label{Stokes-0}
(\boldsymbol{\mathfrak L}{\mathbf u})_k=\partial _\alpha\big(a_{kj}^{\alpha \beta }\partial _\beta u_j\big),\ \ k=1,\ldots ,n.
\end{equation}

Let ${\mathbf u}(\mathbf x,t)$ be an unknown vector velocity field, $p(\mathbf x,t)$ be an unknown (scalar) pressure field, and ${\mathbf f}(\mathbf x,t)$ be a given vector field 
$\R^n $, where $t\in\R$ is the time variable. 
Then the linear PDE system
\begin{equation*}%\label{Stokes}
\partial_t\mathbf u - \boldsymbol{\mathfrak L}{\mathbf u}+\nabla p={\bf f},\ {\rm{div}}\ {\mathbf u}=0,
\end{equation*}
determines the {\it anisotropic evolution incompressible Stokes system}.

The nonlinear system
\begin{align*}%\label{eq:mik6}
\partial_t\mathbf u -\boldsymbol{\mathfrak L}{\mathbf u}+\nabla p+({\mathbf u}\cdot \nabla ){\mathbf u}={\mathbf f}\,, \ \ {\rm{div}} \, {\mathbf u}=0
\end{align*}
is the {\it evolution anisotropic incompressible Navier-Stokes system}, where we use the notation $({\mathbf u}\cdot \nabla ):=u_j\partial_j$.

In the {\it isotropic case}, the tensor ${\mathbb A}$ reduces to
\begin{align*}%\label{isotropic}
a_{kj}^{\alpha \beta}{(\mathbf x,t)}={\lambda{(\mathbf x,t)} \delta _{k\alpha }\delta _{j\beta }}+\mu{(\mathbf x,t)} \left(\delta_{\alpha j}\delta _{\beta k}+\delta_{\alpha \beta }\delta _{kj}\right),\ 1\leq k,j, \alpha ,\beta \leq n\,,
\end{align*}
where $\lambda,\mu\in L_\infty (\R^n\times[0,T])$, and
$c_\mu^{-1}\leq\mu(\mathbf x,t) \leq {c_\mu}$ for a.e. $ \mathbf x $ and $t$,
with some constant $c_\mu>0$ (cf., e.g., Appendix III, Part I, Section 1 in \cite{Temam2001}).
Then it is immediate that condition \eqref{mu} is fulfilled with $C_{\mathbb A}=c_\mu/2$ and thus our results apply also to the Stokes
and Navier-Stokes systems in the {\it isotropic case}.
Moreover, \eqref{L-oper} becomes
\begin{equation*}%\label{Stokes-0i}
\boldsymbol{\mathfrak L}{\mathbf u}
=(\lambda +\mu)\nabla{\rm div}\,\mathbf u
+\mu\Delta \mathbf u
+(\nabla\lambda){\rm div}\,\mathbf u
+2(\nabla \mu)\cdot{\mathbb E}({\mathbf u}).
\end{equation*}
 Assuming $\lambda=0$ and $\mu=1$ we arrive at the classical mathematical formulations of isotropic, constant-coefficient Stokes and Navier-Stokes systems in the familiar form
\begin{align*}%\label{eq:mik6const}
\partial_t\mathbf u -\Delta{\mathbf u}+\nabla p+{({\mathbf u}\cdot \nabla ){\mathbf u}}={\mathbf f}\,, \ \ {\rm{div}} \, {\mathbf u}=0. 
\end{align*}

\section{Periodic function spaces}\label{S2}

Let us introduce some function spaces on torus, i.e., periodic function spaces (see, e.g., 
\cite[p.26]{Agmon1965},  \cite{Agranovich2015}, \cite{McLean1991}, \cite[Chapter 3]{RT-book2010}, \cite[Section 1.7.1]{RRS2016}  
\cite[Chapter 2]{Temam1995}, 
for more details).

Let $n\ge 1$  be an integer and $\T$ be the $n$-dimensional flat torus that can be parametrized as the semi-open cube $\T=\T^n=[0,1)^n\subset\R^n$, cf.  \cite[p. 312]{Zygmund2002}.
In what follows, ${\mathcal D}(\T)=\mathcal C^\infty(\T)$ denotes the (test) space of infinitely smooth real or complex functions on the torus.
As usual, $\N$ denotes the set of natural numbers,  $\N_0$ the set of natural numbers augmented by 0, and $\mathbb{Z}$ the set of integers.

Let   $\bs\xi \in \mathbb{Z}^n$ denote the $n$-dimensional vector with integer components. 
We will further need also the set 
$$\dot\Z^n:=\Z^n\setminus\{\mathbf 0\}.$$
Extending the torus parametrisation to $\R^n$, it is often useful to identify $\T$ with the quotient space $\R^n\setminus \Z^n$. 
Then the space of functions $\mathcal C^\infty(\T)$ on the torus can be identified with the space of $\T$-periodic (1-periodic) functions 
$\mathcal C^\infty_\#=\mathcal C^\infty_\#(\R^n)$ that consists of functions $\phi\in \mathcal C^\infty(\R^n)$ such that
\begin{align}
\label{E3.1}
\phi(\mathbf x+\bs\xi)=\phi(\mathbf x)\quad \forall\,  \bs\xi \in \mathbb{Z}^n.
\end{align}
Similarly, the Lebesgue space on the torus $L_{p}(\T)$, $1\le p\le\infty$,  can be identified with the periodic Lebesgue space $L_{p\#}=L_{p\#}(\R^n)$ that consists of functions $\phi\in L_{p,\rm loc}(\R^n)$, which satisfy the periodicity condition \eqref{E3.1} for a.e. $\mathbf x$.

The space dual  to $\mathcal D(\T)$, i.e.,  the space of linear bounded functionals on $\mathcal D(\T)$,  called the space of torus distributions, is denoted by $\mathcal D'(\T)$ and can be identified with the space of periodic distributions $\mathcal D'_\#$ acting on $\mathcal C^\infty_\#$.

The toroidal/periodic Fourier transform 
mapping a  function $g\in \mathcal C_\#^\infty$ to a set of its Fourier coefficients $\hat g$ is defined as (see, e.g., \cite[Definition 3.1.8]{RT-book2010})  
\begin{align*}%\label{FC}
 \hat g(\bs\xi)=[\F_{\T} g](\bs\xi):=\int_{\T}e^{-2\pi i \mathbf x\cdot\bs\xi}g(\mathbf x)d\mathbf x,\quad \bs\xi\in\Z^n,
\end{align*}
and can be generalised to  the Fourier transform acting on a distribution $g\in\mathcal D'_\#$.

For any $\bs\xi\in\Z^n$, let $|\bs\xi|:=(\sum_{j=1}^n \xi_j^2)^{1/2}$ be the Euclidean norm in $\Z^n$ and let us denote
 \begin{align*}%\label{rho}
\varrho(\bs\xi):=2\pi(1+|\bs\xi|^2)^{1/2}.
\end{align*}
Evidently,
\begin{align}\label{eq:mik9}
\frac{1}{2}\varrho(\bs\xi)^2\le |2\pi\bs\xi|^2\le \varrho(\bs\xi)^2\quad\forall\,\bs\xi\in \dot\Z^n.
\end{align}

Similar to \cite[Definition 3.2.2]{RT-book2010}, for $s\in\R$ we define the {\em periodic/toroidal Sobolev (Bessel-potential) spaces} $H_\#^s:=H_\#^s(\R^n):=H^s(\T)$ that consist of the torus distributions $ g\in\mathcal D'(\T)$, for which the norm
\begin{align}\label{eq:mik10}
\| g\|_{H_\#^s}:=\| \varrho^s\widehat g\|_{\bs\ell_2(\Z^n)}:=\left(\sum_{\bs\xi\in\Z^n}\varrho(\bs\xi)^{2s}|\widehat g(\bs\xi)|^2\right)^{1/2}
\end{align}
is finite, i.e., the series in \eqref{eq:mik10} converges.
Here $\| \cdot\|_{\bs\ell_2(\Z^n)}$ is the standard norm in the space of square summable sequences with indices in $\Z^n$.
By \cite[Proposition 3.2.6]{RT-book2010}, $H_\#^s$ is the Hilbert space with 
the inner (scalar) product in $H_\#^{s}$  defined as
\begin{align}\label{E3.3i}
(g,f)_{H_\#^s}
:=\sum_{\bs\xi\in\Z^n}\varrho(\bs\xi)^{2s}\hat g(\bs\xi)\overline{\hat f(\bs\xi)},\quad \forall\, g,f\in H_\#^{s},\ s\in\R,
\end{align}
where the bar denotes complex conjugate.
Evidently, $H_\#^{0}=L_{2\#}$.

The dual product between $g\in H_\#^s$ and $f\in H_\#^{-s}$, $s\in\R$, is defined (cf. \cite[Definition 3.2.8]{RT-book2010}) as
\begin{align}\label{E3.3}
\langle g,f\rangle_{\T}
:=\sum_{\bs\xi\in\Z^n}\hat g(\bs\xi)\hat f(-\bs\xi).
\end{align}
If $s=0$, i.e., $g,f\in L_{2\#}$,  then \eqref{E3.3i} and \eqref{E3.3} reduce to
$$
\langle g,f\rangle_{\T}=\int_{\T}g(\mathbf x)f(\mathbf x)d\mathbf x=(g,\bar f)_{L_{2\#}}.
$$
For real function $g,f\in L_{2\#}$ we, of course, have $\langle g,f\rangle_{\T}=(g,f)_{L_{2\#}}$.

For any $s\in\R$, the space  $H_\#^{-s}$ is Banach adjoint (dual) to  $H_\#^{s}$, i.e., $H_\#^{-s}=(H_\#^{s})^*$.
Similar to, e.g., \cite[p.76]{McLean2000} one can show that 
\begin{align*}%\label{eq:mik10f}
\| g\|_{H_\#^s}=\sup_{f\in H_\#^{-s}, f\ne 0}\frac{|\langle g,f\rangle_{\T}|}{\|f\|_{H_\#^{-s}}}.
\end{align*}

For $g\in H_\#^s$, $s\in\R$, and  $m\in\N_0$, let us consider the partial sums 
$$g_m(\mathbf x)=\sum_{\bs\xi\in\Z^n, |\bs\xi|\le m}\hat g(\bs\xi)e^{2\pi i \mathbf x\cdot\bs\xi}.$$ 
 Evidently, $g_m\in \mathcal C_\#^\infty$, $\hat g_m(\bs\xi)=\hat g(\bs\xi)$ if $|\bs\xi|\le m$ and  $\hat g_m(\bs\xi)=0$ if $|\bs\xi|> m$.
This implies that $\|g-g_m\|_{H_\#^s}\to 0$ as $m\to\infty$ and hence we can write 
\begin{align}\label{eq:mik11}
g(\mathbf x)=\sum_{\bs\xi\in\Z^n}\hat g(\bs\xi)e^{2\pi i \mathbf x\cdot\bs\xi},
\end{align}
where the Fourier series converges in the sense of norm \eqref{eq:mik10}.
Moreover, since $g$ is an arbitrary distribution from $H_\#^s$, this also implies that the space ${\mathcal C}^\infty_\#$ is dense in $H_\#^s$ for any $s\in\R$ (cf. \cite[Exercise 3.2.9]{RT-book2010}).

There holds the compact embedding $H_\#^t\hookrightarrow H_\#^s$ if $t>s$,  embeddings $H_\#^s\subset \mathcal C_\#^m$ if $m\in\N_0$, $s>m+\frac{n}{2}$, and moreover, $\bigcap_{s\in\R}H_\#^s={\mathcal C}^\infty_\#$ (cf. \cite[Exercises 3.2.10, 3.2.10, and Corollary 3.2.11]{RT-book2010}). 

Note that for each $s$, the periodic norm \eqref{eq:mik10} is equivalent to the periodic norm that we used in \cite{Mikhailov2022, Mikhailov2023}, which is obtained from \eqref{eq:mik10} by replacing there $\varrho(\bs\xi)=2\pi(1+|\bs\xi|^2)^{1/2}$ with $\rho(\bs\xi)=(1+|\bs\xi|^2)^{1/2}$.
We employ here the norm \eqref{eq:mik10} to simplify some norm estimates further in the paper.
Note also that the periodic norms on $H_\#^s$ are equivalent to the corresponding standard (non-periodic) Bessel potential norms on $\T$ as a cubic domain, see, e.g., \cite[Section 13.8.1]{Agranovich2015}.

By \eqref{eq:mik10}, 
$\| g\|^2_{H_\#^s}=|\widehat g(\mathbf 0)|^2 +| g|^2_{H_\#^s},$ 
where
\begin{align*}
| g|_{H_\#^s}:=\| \varrho^s\widehat g\|_{\bs\ell_2(\dot\Z^n)}:=\left(\sum_{\bs\xi\in\dot\Z^n}\varrho(\bs\xi)^{2s}|\widehat g(\bs\xi)|^2\right)^{1/2}
\end{align*}
is the seminorm in $H_\#^s$.

For any $s\in\R$, let us also introduce  the space
\begin{align*}%\label{E3.6}
\dot H_\#^s:=\{g\in H_\#^s: \langle g,1\rangle_{\T}=0\}.
\end{align*}
The definition implies that if $g\in \dot H_\#^s$, then $\widehat g(\mathbf 0)=0$ and 
\begin{align}\label{eq:mik12}
\| g\|_{\dot H_\#^s}=\| g\|_{H_\#^s}=| g|_{H_\#^s}=\| \varrho^s\widehat g\|_{\bs\ell_2(\dot\Z^n)}\ .
\end{align}
The space $\dot H_\#^s$ is the Hilbert space with inner product inherited from \eqref{E3.3i}, that is,
\begin{align}\label{E3.3idot}
(g_1,g_2)_{\dot H_\#^s}
:=\sum_{\bs\xi\in\dot\Z^n}\varrho(\bs\xi)^{2s}\hat g_1(\bs\xi)\overline{\hat g_2(\bs\xi)},\quad \forall\, g_1,g_2\in \dot H_\#^{s},\ s\in\R.
\end{align}
Due to the Riesz representation theorem, the dual product between $g_1\in \dot H_\#^s$ and $f_2\in (\dot H_\#^{s})^*$, $s\in\R$, can be represented as 
\begin{align*}%\label{E3.3dot}
\langle g_1,f_2\rangle_{\T}
:=\sum_{\bs\xi\in\dot\Z^n}\hat g_1(\bs\xi)\hat f_2(-\bs\xi)
=(g_1,g_2)_{\dot H_\#^s}
=\sum_{\bs\xi\in\dot\Z^n}\varrho(\bs\xi)^{2s}\hat g_1(\bs\xi)\overline{\hat g_2(\bs\xi)}.
\end{align*}
where 
\begin{align*}
\hat f_2(\bs\xi)=\varrho(\bs\xi)^{2s}\overline{\hat g_2(-\bs\xi)},\quad 
\hat g_2(\bs\xi)=\varrho(\bs\xi)^{-2s}\overline{\hat f_2(-\bs\xi)},\quad
\bs\xi\in \dot\Z^n
\end{align*}
for some $g_2\in \dot H_\#^s$.
This implies that 
\begin{align}\label{E3.13}
 f_2(\mathbf x)=\left(\Lambda_\#^{2s}\,\overline{g_2}\right)(\mathbf x),
\end{align}
where $\Lambda_\#^{r}: H_\#^s\to H_\#^{s-r}$ is the continuous periodic Bessel-potential operator of the order $r\in\R$ defined as
\begin{align}\label{E3.14}
 \left(\Lambda_\#^{r}\,{g}\right)(\mathbf x)
:=\sum_{\bs\xi\in\Z^n}\varrho(\bs\xi)^{r}\hat g(\bs\xi)e^{2\pi i \mathbf x\cdot\bs\xi}\quad \forall\, g\in  H_\#^{s},\ s\in\R,
\end{align}
see, e.g., \cite[Section 13.8.1]{Agranovich2015}.
Note that \eqref{E3.14} implies
\begin{align*}%\label{E3.14a}
 \left(\Lambda_\#^{2}\,{g}\right)(\mathbf x)
=\sum_{\bs\xi\in\Z^n}(2\pi)^2(1+|\bs\xi|^2)\hat g(\bs\xi)e^{2\pi i \mathbf x\cdot\bs\xi}
=(2\pi)^2 g(\mathbf x) - \Delta^2 g(\mathbf x)
\quad \forall\, g\in  H_\#^{s},\ s\in\R.
\end{align*}

If $\hat g(\mathbf 0)=0$ then \eqref{E3.14} implies that $\widehat{\Lambda_\#^{r}\,g}(\mathbf 0)=0$, and 
thus the operator 
\begin{align}\label{E3.15}
\Lambda_\#^{r}: \dot H_\#^s\to \dot H_\#^{s-r}
\end{align}
is continuous as well.
Hence by \eqref{E3.13} we conclude that
$(\dot H_\#^{s})^*=\dot H_\#^{-s}$.

Denoting 
$
\dot {\mathcal C}^\infty_\#:=\{g\in {\mathcal C}^\infty_\#: \langle g,1\rangle_{\T}=0\}
$,
then $\bigcap_{s\in\R}\dot H_\#^s=\dot {\mathcal C}^\infty_\#$.

The corresponding spaces of $n$-component vector functions/distributions are denoted as $\mathbf L_{q\#}:=(L_{q\#})^n$, $\mathbf H_\#^s:=(H_\#^s)^n$, etc.

Note that  the norm $\|\nabla (\cdot )\|_{{\mathbf H}_\#^{s-1}}$
is an equivalent norm in $\dot H_\#^s$. 
Indeed, by \eqref{eq:mik11}
\begin{align*}%\label{E2.17}
\nabla g(\mathbf x)=2\pi i\sum_{\bs\xi\in\dot\Z^n}\bs\xi e^{2\pi i \mathbf x\cdot\bs\xi}\hat g(\bs\xi),\quad
\widehat{\nabla g}(\bs\xi)=2\pi i\bs\xi \hat g(\bs\xi)\quad \forall\,g\in \dot H_\#^s,
\end{align*}
and 
then \eqref{eq:mik9} and \eqref{eq:mik12} imply
\begin{align}%\label{eq:mik13-}
\frac12|g|^2_{H_\#^s}&\le \| \nabla g\|^2_{{\mathbf H}_\#^{s-1}}
\le |g|^2_{H_\#^s} \quad \forall\,g\in H_\#^s,
\nonumber\\
\label{eq:mik13}
\frac12\|g\|^2_{H_\#^s}=\frac12\|g\|^2_{\dot H_\#^s}=\frac12|g|^2_{H_\#^s}&\le \| \nabla g\|^2_{{\mathbf H}_\#^{s-1}}
\le |g|^2_{H_\#^s}=\|g\|^2_{\dot H_\#^s}=\| g\|^2_{H_\#^s} \quad \forall\,g\in \dot H_\#^s.
\end{align}
The vector counterpart of \eqref{eq:mik13} takes form
\begin{align}\label{eq:mik14}
\frac12\| \mathbf v\|^2_{{\mathbf H}_\#^s}=\frac12\| \mathbf v\|^2_{\dot{\mathbf H}_\#^s}
\le \| \nabla \mathbf v\|^2_{(H_\#^{s-1})^{n\times n}}
\le \| \mathbf v\|^2_{\dot{\mathbf H}_\#^s}=\| \mathbf v\|^2_{{\mathbf H}_\#^s} \quad \forall\,\mathbf v\in \dot {\mathbf H}_\#^s.
\end{align}
Note that the second inequalities in \eqref{eq:mik13} and \eqref{eq:mik14} are valid also in the more general cases, i.e., for $\,g\in H_\#^s$ and $\mathbf v\in  {\mathbf H}_\#^s$, respectively.

We will further need the first Korn inequality
\begin{align}
\label{eq:mik15}
\|\nabla {\mathbf v}\|^2_{(L_{2\#})^{n\times n}}\leq 2\|\mathbb E ({\mathbf v})\|^2_{(L_{2\#})^{n\times n}}\quad\forall\, \mathbf v\in {\mathbf H}_\#^1 
\end{align}
that can be easily proved by adapting, e.g., the proof in \cite[Theorem 10.1]{McLean2000} to the periodic Sobolev space; cf. also \cite[Theorem 2.8]{Oleinik1992}.

Let us also define the Sobolev spaces of divergence-free functions and distributions,
\begin{align*}%\label{E3.8}
\dot{\mathbf H}_{\#\sigma}^{s}
&:=\left\{{\bf w}\in\dot{\mathbf H}_\#^{s}:{\div}\, {\bf w}=0\right\},\quad s\in\R,
\end{align*}
endowed with the same norm \eqref{eq:mik10}.
Similarly, ${\mathbf C}^\infty_{\#\sigma}$ and $\mathbf L_{q\#\sigma}$ denote the subspaces of divergence-free vector-functions from 
${\mathbf C}^\infty_{\#}$ and $\mathbf L_{q\#}$, respectively, etc.

The space $\dot{\mathbf H}_{\#\sigma}^s$ is the Hilbert space with inner product inherited from \eqref{E3.3i} and \eqref{E3.3idot}, that is,
\begin{align*}%\label{E3.3idot-sigma}
(\mathbf g_1,\mathbf g_2)_{\dot{\mathbf H}_{\#\sigma}^s}
:=\sum_{\bs\xi\in\dot\Z^n}\varrho(\bs\xi)^{2s}\widehat{\mathbf g}_1(\bs\xi)\overline{\widehat{\mathbf g}_2(\bs\xi)},\quad \forall\, \mathbf g_1,\mathbf g_2\in \dot{\mathbf H}_{\#\sigma}^{s},\ s\in\R.
\end{align*}
Due to the Riesz representation theorem, the dual product between $\mathbf g_1\in \dot{\mathbf H}_{\#\sigma}^s$ and ${\mathbf f}_2\in (\dot{\mathbf H}_{\#\sigma}^{s})^*$, $s\in\R$, can be represented as 
\begin{align*}%\label{E3.3dot-sigma}
\langle \mathbf g_1,{\mathbf f}_2\rangle_{\T}
:=\sum_{\bs\xi\in\dot\Z^n}\widehat{\mathbf g}_1(\bs\xi)\widehat{\mathbf f}_2(-\bs\xi)
=(\mathbf g_1,\mathbf g_2)_{\dot{\mathbf H}_{\#\sigma}^s}
=\sum_{\bs\xi\in\dot\Z^n}\varrho(\bs\xi)^{2s}\widehat{\mathbf g}_1(\bs\xi)\overline{\widehat{\mathbf g}_2(\bs\xi)}.
\end{align*}
where 
\begin{align*}
\widehat{\mathbf f}_2(\bs\xi)=\varrho(\bs\xi)^{2s}\overline{\widehat{\mathbf g}_2(-\bs\xi)},\quad \bs\xi\in \dot\Z^n
\end{align*}
for some $\mathbf g_2\in \dot{\mathbf H}_{\#\sigma}^s$.
This implies that 
\begin{align}\label{E3.13-sigma}
 {\mathbf f}_2(\mathbf x)=\left(\Lambda_\#^{2s}\,\overline{\mathbf g_2}\right)(\mathbf x),
\end{align}
where
the operator 
\begin{align}\label{3.23}
\Lambda_\#^{r}: \dot{\mathbf H}_{\#\sigma}^s\to \dot{\mathbf H}_{\#\sigma}^{s-r}
\end{align}
defined as in \eqref{E3.14} is continuous.
Hence by \eqref{E3.13-sigma} we conclude that
$$
(\dot{\mathbf H}_{\#\sigma}^{s})^*=\dot{\mathbf H}_{\#\sigma}^{-s}.
$$

Let us also introduce the space 
\begin{align*}%\label{Hg}
\dot{\mathbf H}_{\# g}^{s}
&:=\left\{{\bf w}=\nabla q,\ q\in\dot{H}_\#^{s+1}\right\},\quad s\in\R,
\end{align*}
endowed with the norm \eqref{eq:mik10}.

Let $s\in\R$, $\mathbf w\in \dot{\mathbf H}_{\# g}^{s}$ and $\mathbf v\in \dot{\mathbf H}_{\# \sigma}^{s}$
By \eqref{E3.3i}, for their inner product in  $\dot{\mathbf H}_{\#}^{s}$ we obtain
\begin{multline*}%\label{E3.3igs}
(\mathbf w,\mathbf v)_{H_\#^s}
:=\sum_{\bs\xi\in\Z^n}\varrho(\bs\xi)^{2s}\widehat{\mathbf w}(\bs\xi)\cdot\overline{\widehat{\mathbf v}(\bs\xi)}
=\sum_{\bs\xi\in\Z^n}\varrho(\bs\xi)^{2s}2\pi i\bs\xi\widehat{q}(\bs\xi)\cdot\overline{\widehat{\mathbf v}(\bs\xi)}
\\
=-\sum_{\bs\xi\in\Z^n}\varrho(\bs\xi)^{2s}\widehat{q}(\bs\xi)\overline{2\pi i\bs\xi\cdot\widehat{\mathbf v}(\bs\xi)}
=-\sum_{\bs\xi\in\Z^n}\varrho(\bs\xi)^{2s}\widehat{q}(\bs\xi)\overline{\widehat{\div\,\mathbf v}(\bs\xi)}=0.
\end{multline*}
Hence $\dot{\mathbf H}_{\# g}^{s}$ and $\dot{\mathbf H}_{\# \sigma}^{s}$ are orthogonal subspaces of $\dot{\mathbf H}_{\#}^{s}$ in the sense of inner product.

On the other hand, if $s\in\R$, $\mathbf w\in \dot{\mathbf H}_{\# g}^{s}$ and $\mathbf v\in \dot{\mathbf H}_{\# \sigma}^{-s}$, then
for their dual product  we obtain
\begin{align*}%\label{E3.3igs}
\langle\mathbf w,\mathbf v\rangle=\langle\nabla q,\mathbf v\rangle=-\langle q,\div\,\mathbf v\rangle=0.
\end{align*}
Hence the spaces $\dot{\mathbf H}_{\# g}^{s}$ and $\dot{\mathbf H}_{\# \sigma}^{-s}$ are orthogonal in the sense of dual product.

For $s\in\R$ and $\mathbf F\in \dot{\mathbf H}_{\#}^{s}$, let us introduce the operators $\mathbb P_g$ and $\mathbb P_\sigma$ as follows,
\begin{align*}
&(\mathbb P_g\, \mathbf F)(\mathbf x):=
\sum_{\bs\xi\in\dot\Z^n}
\bs\xi\frac{\bs\xi\cdot\widehat{\mathbf F}(\bs\xi)}{|\bs\xi|^2}e^{2\pi i \mathbf x\cdot\bs\xi},
\\
&(\mathbb P_\sigma\, \mathbf F)(\mathbf x):=
\sum_{\bs\xi\in\dot\Z^n}\left(\widehat{\mathbf F}(\bs\xi) 
-\bs\xi\frac{\bs\xi\cdot\widehat{\mathbf F}(\bs\xi)}{|\bs\xi|^2}\right)e^{2\pi i \mathbf x\cdot\bs\xi}.
\end{align*}
Note that 
\begin{align}\label{E2.30}
\mathbf F(\mathbf x)=(\mathbb P_\sigma\, \mathbf F)(\mathbf x)+(\mathbb P_g\, \mathbf F)(\mathbf x)\quad
\forall\, \mathbf F\in \dot{\mathbf H}_{\#}^{s},\ s\in\R.
\end{align}
Evidently
\begin{align*}
&(\mathbb P_g\, \mathbf F)(\mathbf x)=\nabla q(\mathbf x),
\text{ where }
q(\mathbf x)=\sum_{\bs\xi\in\dot\Z^n}
\frac{\bs\xi\cdot\widehat{\mathbf F}(\bs\xi)}{2\pi i|\bs\xi|^2}e^{2\pi i \mathbf x\cdot\bs\xi},
\end{align*}
hence $q\in \dot{H}_{\#}^{s+1}$. 

One can check that $\mathbb P_g(\mathbb P_g\, \mathbf F)=\mathbb P_g\, \mathbf F$ and thus 
$
\mathbb P_g:\dot{\mathbf H}_\#^{s}\to \dot{\mathbf H}_{\#g}^{s}
$
is a bounded projector.
On the other hand,
$\div\,\mathbb P_\sigma\, \mathbf F=0$, $\mathbb P_\sigma(\mathbb P_\sigma\, \mathbf F)=\mathbb P_\sigma\, \mathbf F$ and hence
$
\mathbb P_\sigma:\dot{\mathbf H}_\#^{s}\to \dot{\mathbf H}_{\#\sigma}^{s}
$
is also a bounded projector.
Since $\dot{\mathbf H}_{\# g}^{s}$ and $\dot{\mathbf H}_{\# \sigma}^{s}$ are orthogonal subspaces of $\dot{\mathbf H}_{\#}^{s}$, the projectors $\mathbb P_g$ and $\mathbb P_\sigma$ are orthogonal in $\dot{\mathbf H}^{s}$.
The projector $\mathbb P_\sigma$ is called the Leray projector (see, e.g., \cite[Section 2.1]{RRS2016}).

Decomposition \eqref{E2.30} implies the representation 
$\dot{\mathbf H}_{\#}^{s}= \dot{\mathbf H}_{\# g}^{s}\oplus \dot{\mathbf H}_{\#\sigma}^{s}$ called  the Helmholtz-Weyl decomposition.
Note that the orthogonality of $\dot{\mathbf H}_{\# g}^{s}$ and $\dot{\mathbf H}_{\# \sigma}^{s}$ implies that for any $\mathbf F\in \dot{\mathbf H}_{\#}^{s}$, the representation $\mathbf F=\mathbf F_g+ \mathbf F_\sigma$, where $\mathbf F_g\in \dot{\mathbf H}_{\# g}^{s}$ and $\mathbf F_\sigma\in \dot{\mathbf H}_{\#\sigma}^{s}$, is unique and hence is given by  \eqref{E2.30}.

Summarising the obtained results, we arrive at the following assertion (cf., e.g., \cite[Theorem 2.6]{RRS2016}, where a similar statement is proved for $s=0$ and $n=3$). 
\begin{theorem} 
Let $s\in\R$ and $n\ge 2$. 

(a) The space $\dot{\mathbf H}_{\#}^{s}$ has the Helmholtz-Weyl decomposition,
$
\dot{\mathbf H}_{\#}^{s}=\dot{\mathbf H}_{\# g}^{s}\oplus\dot{\mathbf H}_{\# \sigma}^{s},
$
that is, any $\mathbf F\in \dot{\mathbf H}_{\#}^{s}$ can be uniquely represented as 
$\mathbf F=\mathbf F_g+ \mathbf F_\sigma$, where $\mathbf F_g\in \dot{\mathbf H}_{\# g}^{s}$ and 
$\mathbf F_\sigma\in \dot{\mathbf H}_{\#\sigma}^{s}$.

(b) The spaces $\dot{\mathbf H}_{\# g}^{s}$ and $\dot{\mathbf H}_{\# \sigma}^{s}$ are orthogonal subspaces of $\dot{\mathbf H}_{\#}^{s}$ in the sense of inner product, i.e., $(\mathbf w,\mathbf v)_{H_\#^s}=0$ for any $\mathbf w\in \dot{\mathbf H}_{\# g}^{s}$ and $\mathbf v\in \dot{\mathbf H}_{\# \sigma}^{-s}$.

(c) The spaces $\dot{\mathbf H}_{\# g}^{s}$ and $\dot{\mathbf H}_{\# \sigma}^{-s}$ are orthogonal in the sense of dual product, i.e., $\langle\mathbf w,\mathbf v\rangle=0$ for any $\mathbf w\in \dot{\mathbf H}_{\# g}^{s}$ and 
$\mathbf v\in \dot{\mathbf H}_{\# \sigma}^{-s}$.

(d) There exist the bounded orthogonal projector operators $\mathbb P_g:\dot{\mathbf H}_\#^{s}\to \dot{\mathbf H}_{\#g}^{s}$ and $\mathbb P_\sigma:\dot{\mathbf H}_\#^{s}\to \dot{\mathbf H}_{\#\sigma}^{s}$ (the Leray projector), while $\mathbf F=\mathbb P_g\mathbf F+\mathbb P_\sigma\mathbf F$ for any $\mathbf F\in \dot{\mathbf H}_\#^{s}$.
\end{theorem}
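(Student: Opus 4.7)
The plan is to observe that essentially all the ingredients needed are already established in the paragraphs immediately preceding the statement, so the proof reduces to assembling the intermediate facts and handling the boundedness and uniqueness carefully. I would structure the proof around the explicit Fourier-series definitions of $\mathbb P_g$ and $\mathbb P_\sigma$, since these give direct access to every claim through the action of Fourier multipliers.

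First, for part (d) I would verify continuity of $\mathbb P_g$ and $\mathbb P_\sigma$ on $\dot{\mathbf H}_\#^s$. The Fourier symbol of $\mathbb P_g$ at mode $\bs\xi\in\dot\Z^n$ is the rank-one orthogonal projection matrix $\bs\xi\bs\xi^\top/|\bs\xi|^2$ in $\R^n$, and that of $\mathbb P_\sigma$ is its complement $I-\bs\xi\bs\xi^\top/|\bs\xi|^2$; both have spectral norm bounded by $1$ uniformly in $\bs\xi$. Hence, using the Fourier characterisation \eqref{eq:mik10} of the $\dot{\mathbf H}_\#^s$-norm, one obtains $\|\mathbb P_g \mathbf F\|_{\dot{\mathbf H}_\#^s}\le\|\mathbf F\|_{\dot{\mathbf H}_\#^s}$ and similarly for $\mathbb P_\sigma$. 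Idempotence follows from the idempotence of these pointwise matrix projections, and the fact that the range of $\mathbb P_g$ lies in $\dot{\mathbf H}_{\# g}^s$ (that $\mathbb P_g\mathbf F=\nabla q$ with $q\in\dot H_\#^{s+1}$) and the range of $\mathbb P_\sigma$ lies in $\dot{\mathbf H}_{\#\sigma}^s$ (that $\div\,\mathbb P_\sigma\mathbf F=0$) are already recorded before the statement.

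Next, for the existence part of (a) I would simply invoke the termwise identity \eqref{E2.30}, which is valid by inspection of the Fourier coefficients of $\mathbb P_g\mathbf F$ and $\mathbb P_\sigma\mathbf F$ and the fact that $\widehat{\mathbf F}(\mathbf 0)=0$. Parts (b) and (c) have already been derived explicitly in the text: (b) by a direct Fourier-side computation using $\widehat{\nabla q}(\bs\xi)=2\pi i\bs\xi\,\hat q(\bs\xi)$ and $\bs\xi\cdot\widehat{\mathbf v}(\bs\xi)=\widehat{\div\,\mathbf v}(\bs\xi)=0$, and (c) by the integration-by-parts identity $\langle\nabla q,\mathbf v\rangle=-\langle q,\div\,\mathbf v\rangle=0$. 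I would simply re-state these as consequences.

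Finally, for uniqueness in (a) I would argue that if $\mathbf F_g+\mathbf F_\sigma=\tilde{\mathbf F}_g+\tilde{\mathbf F}_\sigma$ with $\mathbf F_g,\tilde{\mathbf F}_g\in\dot{\mathbf H}_{\# g}^s$ and $\mathbf F_\sigma,\tilde{\mathbf F}_\sigma\in\dot{\mathbf H}_{\#\sigma}^s$, then $\mathbf w:=\mathbf F_g-\tilde{\mathbf F}_g=\tilde{\mathbf F}_\sigma-\mathbf F_\sigma\in\dot{\mathbf H}_{\# g}^s\cap\dot{\mathbf H}_{\#\sigma}^s$, and by the inner-product orthogonality in part (b) we have $(\mathbf w,\mathbf w)_{\dot{\mathbf H}_\#^s}=0$, so $\mathbf w=0$. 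Apart from the boundedness check of the projectors, there is no genuine obstacle here; the only subtlety worth emphasising is that one must work on $\dot{\mathbf H}_\#^s$ rather than ${\mathbf H}_\#^s$ so that the division by $|\bs\xi|^2$ in the symbols of $\mathbb P_g,\mathbb P_\sigma$ is unproblematic, which is why the zero-mean constraint $\widehat{\mathbf F}(\mathbf 0)=0$ is built into the setting.
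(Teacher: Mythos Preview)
Your proposal is correct and follows essentially the same approach as the paper, which explicitly states that the theorem is obtained by ``summarising the obtained results'' from the preceding paragraphs. The only addition you make beyond the paper is the explicit spectral-norm-$\le 1$ argument for the boundedness of $\mathbb P_g$ and $\mathbb P_\sigma$, which the paper simply asserts; otherwise your assembly of the decomposition, the two orthogonalities, and the uniqueness via $(\mathbf w,\mathbf w)_{\dot{\mathbf H}_\#^s}=0$ mirrors the paper's reasoning exactly.
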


For the evolution problems we will systematically use the spaces  $L_q(0,T;H^s_\#)$, $s\in\R$, $1\le q\le\infty$, $0<T<\infty$, which consists of functions that map $t\in(0,T)$ to a function or distributions from $H^s_\#$.
For $1\le q<\infty$, the space $L_q(0,T;H^s_\#)$ is endowed with the norm
\begin{align*}%\label{LqHsnorm}
 \|h\|_{L_q(0,T;H^s_\#)}=\left(\int_0^T\|h(\cdot,t)\|^q_{H^s_\#} d t\right)^{1/q}
=\left(\int_0^T\left[\sZ\varrho(\bs\xi)^{2s}|\widehat h(\bs\xi,t)|^2\right]^{q/2} d t\right)^{1/q}<\infty,
\end{align*}
and for $q=\infty$ with the norm
\begin{align*}%\label{LinfHsnorm}
 \|h\|_{L_\infty(0,T;H^s_\#)}={\rm ess\,sup}_{t\in(0,T)}\|h(\cdot,t)\|_{H^s_\#}
={\rm ess\,sup}_{t\in(0,T)}\left[\sZ\varrho(\bs\xi)^{2s}|\widehat h(\bs\xi,t)|^2\right]^{1/2}<\infty.
\end{align*}

For a function (or distribution) $h(\mathbf x,t)$, we will use the notation $h'(\mathbf x,t):=\partial_t h(\mathbf x,t):=\dfrac{\partial}{\partial t}h(\mathbf x,t)$ for the partial derivative in the time variable $t$. 
Let $X$ and $Y$ be some Hilbert spaces. We will further need the space 
\begin{align*}%\label{E4.16}
W^1(0,T;X, Y):=\{u\in L_2(0,T; X) : u'\in L_2(0,T; Y)\}
\end{align*}
endowed with the norm
\begin{align*}%\label{E4.17}
\|u\|_{W^1(0,T;X, Y)}=(\|u\|_{L_2(0,T; X)}^2 +\|u'\|_{L_2(0,T; Y)}^2)^{1/2}.
\end{align*}
Spaces of such type are considered in \cite[Chapter 1, Section 2.2]{Lions-Magenes1}. 
We will particularly need the spaces $W^1(0,T;H^s_\#, H^{s'}_\#)$  and their vector counterparts.

Unless stated otherwise, we will assume in this paper that all vector and scalar variables are real-valued (however with complex-valued Fourier coefficients).

\section{Weak formulation of the evolution spatially-periodic anisotropic Navier-Stokes problem}

Let us consider the following Navier-Stokes problem
for the real-valued unknowns $({\mathbf u},p )$, 
\begin{align}
\label{NS-problem-div0}
\partial_t\mathbf u -\bs{\mathfrak L}{\mathbf u}+\nabla p+({\mathbf u}\cdot \nabla ){\mathbf u}&=\mathbf{f}
\quad \mbox{in } \T\times(0,T), 
\\
\label{NS-problem-div0-div}
{\rm{div}}\, {\mathbf u}&=0\quad \mbox{in } \T\times(0,T),
\\
\label{NS-problem-div0-IC}
\mathbf u(\cdot,0)&=\mathbf u^0\ \mbox{in } \T ,
\end{align}
with given data
${\mathbf f}\in L_2(0,T;\dot{\mathbf H}_\#^{-1})$,  $\mathbf u^0\in \dot{\mathbf H}_{\#\sigma}^{0}$. 
Note that the time-trace $\mathbf u(\cdot,0)$ for $\mathbf u$ solving the weak form of \eqref{NS-problem-div0}--\eqref{NS-problem-div0-div} is well defined, see Definition \ref {D6.1} and Remark \ref{R4.3}.

Let us introduce the bilinear form
\begin{align}
\label{NS-a-vsigma}
a_{\T}({\mathbf u},{\mathbf v})=
a_{\T}(t;{\mathbf u},{\mathbf v})
:=
\left\langle a_{ij}^{\alpha \beta }(\cdot,t)E_{j\beta }({\mathbf u}),E_{i\alpha }({\mathbf v})\right\rangle _{\T}
&\,\
\forall \ {\mathbf u}, {\mathbf v}\in \dot{\mathbf H}_{\#}^1.
\end{align}
By the boundedness condition \eqref{TensNorm} and inequality \eqref{eq:mik14} we have
\begin{align}
\label{NS-a-1-v2-S-0v}
|a_{\T}(t;{\mathbf u},{\mathbf v})|
\le \|\mathbb A\| \|{\mathbb E}({\mathbf u})\|_{L_{2\#}^{n\times n}}\|{\mathbb E}({\bf v})\|_{L_{2\#}^{n\times n}}
&\le \|\mathbb A\| \|\nabla{\mathbf u}\|_{L_{2\#}^{n\times n}}\|\nabla{\bf v}\|_{L_{2\#}^{n\times n}}
\nonumber\\
&\le \|\mathbb A\| \|{\mathbf u}\|_{\dot{\mathbf H}_{\#}^1}\|{\bf v}\|_{\dot{\mathbf H}_{\#}^1}
\quad
\forall \ {\mathbf u}, {\mathbf v}\in \dot{\mathbf H}_{\#}^1.
\end{align}
If the relaxed ellipticity condition \eqref{mu} holds, taking into account the relation $\sum_{i=1}^nE_{ii}({\bf w})=\div {\bf w}=0$ for  ${\bf w}\in 
{\dot{\mathbf H}_{\#\sigma}^1}$, 
equivalence of the norm $\|\nabla (\cdot )\|_{L_{2\#}^{n\times n}}$
to the norm $\|\cdot \|_{\dot{\mathbf H}_{\#\sigma}^1}$ in $\dot{\mathbf H}_{\#\sigma}^1$, see \eqref{eq:mik14}, and the first Korn inequality \eqref{eq:mik15},
we obtain
\begin{align}
\label{NS-a-1-v2-S-0}
a_{\T}(t;{\bf w},{\mathbf w})=\left\langle a_{ij}^{\alpha \beta}(\cdot,t)E_{j\beta }({\bf w}),E_{i\alpha }({\bf w})\right\rangle _{\T}
&\geq C_{\mathbb A}^{-1}\|{\mathbb E}({\bf w})\|_{L_{2\#}^{n\times n}}^2
\nonumber\\
&\geq\frac{1}{2}C_{\mathbb A}^{-1}\|\nabla {\bf w}\|_{L_{2\#}^{n\times n}}^2
\geq \frac14 C_{\mathbb A}^{-1}\|{\bf w}\|_{\dot{\mathbf H}_{\#\sigma}^1}^2
\quad\forall \, {\bf w}\in \dot{\mathbf H}_{\#\sigma}^1.
\end{align}
Then  \eqref{NS-a-1-v2-S-0v} and \eqref{NS-a-1-v2-S-0} give
\begin{align}
\label{NS-a-1-v2-S-}
\frac14 C_{\mathbb A}^{-1}\|{\bf w}\|_{\dot{\mathbf H}_{\#\sigma}^1}^2
\le a_{\T}(t;{\bf w},{\mathbf w})
\le\|\mathbb A\| \|{\bf w}\|_{\dot{\mathbf H}_{\#\sigma}^1}^2\quad
\forall \, {\bf w}\in \dot{\mathbf H}_{\#\sigma}^1.
\end{align}
This inequality implies that $\sqrt{a_{\T}(t;{\bf w},{\mathbf w})}$ is an equivalent norm 
in $\dot{\mathbf H}_{\#\sigma}^1$ for almost any $t$ and, moreover, 
\begin{align}\label{eqnorm}
|\!|\!|\mathbf w|\!|\!|_{L_2(0,T;\dot{\mathbf H}_{\#}^1)}:=\displaystyle \left(\int_0^Ta_{\T}(t;{\bf w}(\cdot,t),{\mathbf w}(\cdot,t))dt\right)^{1/2}
\end{align} 
is an equivalent norm 
in $L_2(0,T;\dot{\mathbf H}_{\#\sigma}^{1})$.

We use the following definition of weak solution, 
that for $n\in\{2,3,4\}$ reduces to the weak formulations employed, e.g., in 
 \cite[Chapter 1, Problem 6.2]{Lions1969},
\cite[Definition 8.5]{Constantin-Foias1988},
\cite[Problem 2.1]{Temam1995}, \cite[Chapter 3, Problem 3.1]{Temam2001}.
However the definition that we use is applicable also to higher dimensions (and allows for those dimensions more general test functions than in \cite[Chapter 1, Problem 6.2]{Lions1969}). 
\begin{definition}\label{D6.1} Let $n\ge 2$, $T>0$, ${\mathbf f}\in L_2(0,T;\dot{\mathbf H}_\#^{-1})$ and  
$\mathbf u^0\in \dot{\mathbf H}_{\#\sigma}^{0}$.
A function ${\mathbf u}\in  L_{\infty}(0,T;\dot{\mathbf H}_{\#\sigma}^{0})\cap L_2(0,T;\dot{\mathbf H}_{\#\sigma}^{1})$ is called a weak solution of the evolution space-periodic anisotropic Navier-Stokes initial value problem \eqref{NS-problem-div0}--\eqref{NS-problem-div0-IC} if it solves  the initial-variational problem
\begin{align}
\label{NS-eq:mik51a}
&\left\langle {\mathbf u}'(\cdot,t)+\mathbb P_\sigma[({\mathbf u}(\cdot,t)\cdot \nabla ){\mathbf u}(\cdot,t)],{\mathbf w}\right\rangle _{\T }
+a_{\T}({\mathbf u}(\cdot,t),{\mathbf w})
=\langle\mathbf{f}(\cdot,t), \mathbf w\rangle_{\T},\ 
\text{for a.e. } t\in(0,T),\
\forall\,\mathbf w\in \dot{\mathbf H}_{\#\sigma}^{1},
\\ 
\label{NS-eq:mik51in}
&\langle {\mathbf u}(\cdot,0), \mathbf w\rangle_{\T}=\langle\mathbf u^0, \mathbf w\rangle_{\T},\
\forall\,\mathbf w\in \dot{\mathbf H}_{\#\sigma}^{0}.
\end{align}
The associated pressure $p$ is a distribution on $\T\times(0,T)$ satisfying \eqref{NS-problem-div0} in the sense of distributions, 
i.e.,
\begin{align}
\label{NS-eq:mik51a-d}
\left\langle {\mathbf u}'(\cdot,t)+({\mathbf u}(\cdot,t)\cdot \nabla ){\mathbf u}(\cdot,t),{\mathbf w}\right\rangle _{\T }
+a_{\T}({\mathbf u}(\cdot,t),{\mathbf w})
&+\langle\nabla p(\cdot,t),{\mathbf w}\rangle _{\T }
\nonumber\\
&=\langle\mathbf{f}(\cdot,t), \mathbf w\rangle_{\T},\ 
\text{for a.e. }  t\in(0,T),\ 
\forall\,\mathbf w\in {\mathbf C}^\infty_{\#}.
\end{align}
\end{definition}

To justify the weak formulation \eqref{NS-eq:mik51a}, let us act on \eqref{NS-problem-div0} by the Leray projector $\mathbb P_\sigma$ and taking into account that $\mathbb P_\sigma\partial_t\mathbf u=\partial_t\mathbf u$ and $\mathbb P_\sigma\nabla p=\mathbf 0$, we obtain
\begin{align}
\label{NS-problem-just}
\partial_t\mathbf u +\mathbb P_\sigma[({\mathbf u}\cdot \nabla ){\mathbf u}]-\mathbb P_\sigma\bs{\mathfrak L}{\mathbf u}&=\mathbb P_\sigma\mathbf{f}
\quad \mbox{in } \T\times(0,T).
\end{align}
Assuming that ${\mathbf u}\in  L_2(0,T;\dot{\mathbf H}_{\#\sigma}^{1})$, $a_{ij}^{\alpha \beta }\in L_\infty(0,T;L_{\infty\#})$,  by \eqref{Stokes-0} we obtain that $\bs{\mathfrak L}{\mathbf u}\in L_2(0,T;\dot{\mathbf H}_{\#}^{-1})$ and due to the symmetry conditions \eqref{Stokes-sym}, we get for any 
$\mathbf w\in \dot{\mathbf H}_{\#\sigma}^{1}$ and for a.e.  $t\in(0,T)$,
$$
-\langle\mathbb P_\sigma\bs{\mathfrak L}{\mathbf u}, \mathbf w\rangle_\T
=-\langle\bs{\mathfrak L}{\mathbf u}, \mathbf w\rangle_\T
=\langle a_{kj}^{\alpha \beta }(\cdot,t)E_{j\beta }({\mathbf u}),\partial_\alpha {\mathbf w}_k\rangle _{\T}
=\langle a_{kj}^{\alpha \beta }(\cdot,t)E_{j\beta }({\mathbf u}),E_{k,\alpha} ({\mathbf w})\rangle _{\T}
=a_{\T}({\mathbf u},{\mathbf w}).
$$
For ${\mathbf f}\in L_2(0,T;\dot{\mathbf H}_\#^{-1})$, we also have 
$\langle\mathbb P_\sigma\mathbf{f}(\cdot,t), \mathbf w\rangle_{\T}=\langle\mathbf{f}(\cdot,t), \mathbf w\rangle_{\T}$.
Hence taking the dual product of equation \eqref{NS-problem-just} with $\mathbf w$, we arrive at equation \eqref{NS-eq:mik51a}.  The boundedness of the first dual product in \eqref{NS-eq:mik51a} and the weak initial condition \eqref{NS-eq:mik51in} are justified in Lemma \ref{L4.2} and Remark \ref{R4.3} below.
Equation \eqref{NS-eq:mik51a-d} is deduced in a similar way.

\begin{lemma}\label{L4.2}
Let $n\ge 2$, $T>0$, $a_{ij}^{\alpha \beta }\in L_\infty(0,T;L_{\infty\#})$, ${\mathbf f}\in L_2(0,T;\dot{\mathbf H}_\#^{-1})$ and  
$\mathbf u^0\in \dot{\mathbf H}_{\#\sigma}^{0}$.
Let ${\mathbf u}\in  L_{\infty}(0,T;\dot{\mathbf H}_{\#\sigma}^{0})\cap L_2(0,T;\dot{\mathbf H}_{\#\sigma}^{1})$ solve equation  \eqref{NS-eq:mik51a}. 

(i) Then 
\begin{align}\label{E4.13}
\mathbf{Du}:={\mathbf u}'+\mathbb P_\sigma[({\mathbf u}\cdot \nabla ){\mathbf u}]\in L_2(0,T;\dot{\mathbf H}_{\#\sigma}^{-1})\quad
\text{and}\quad \mathbf{Du}(\cdot,t)\in \dot{\mathbf H}_{\#\sigma}^{-1} \text{ for a.e. } t\in [0,T],
\end{align}
while 
\begin{align}\label{E4.13A}
&({\mathbf u}\cdot \nabla ){\mathbf u}\in L_2(0,T;\dot{\mathbf H}_{\#}^{-n/2})\quad\text{and}\quad 
({\mathbf u}\cdot \nabla ){\mathbf u}(\cdot,t)\in \dot{\mathbf H}_{\#}^{-n/2} \quad \text{ for a.e. } t\in [0,T],
\\
\label{E4.13B}
&
{\mathbf u}'\in L_2(0,T;\dot{\mathbf H}_{\#\sigma}^{-n/2})
\quad \text{and}\quad 
{\mathbf u}'(\cdot,t)\in \dot{\mathbf H}_{\#\sigma}^{-n/2}\quad \text{ for a.e. } t\in [0,T],
\end{align}
and hence ${\mathbf u}\in W^1(\dot{\mathbf H}_{\#\sigma}^{1},\dot{\mathbf H}_{\#\sigma}^{-n/2})$.

In addition,
\begin{align}
\label{E4.29-3an}
&\partial_t\|{\mathbf u}\|_{\dot{\mathbf H}^{{-(n-2)/4}}_{\#\sigma}}^2
=2\langle \Lambda_\#^{-n/2} \mathbf u', \Lambda_\# {\mathbf u}\rangle _{\T } 
=2\langle \mathbf u',\Lambda_\#^{1-n/2}{\mathbf u}\rangle _{\T } 
=2\langle \Lambda_\#^{1-n/2}\mathbf u',{\mathbf u}\rangle _{\T } 
\end{align}
for a.e. $t\in(0,T)$ and also in the distribution sense on $(0,T)$.

(ii) Moreover, $\mathbf u$ is almost everywhere on $[0,T]$ equal to a function $\widetilde{\mathbf u}\in \mathcal C^0([0,T];\dot{\mathbf H}_{\#\sigma}^{-(n-2)/4})$,
and $\widetilde{\mathbf u}$ is also $\dot{\mathbf H}^0_{\#\sigma}$-weakly continuous in time on $[0,T]$, that is,
$$
\lim_{t\to t_0}\langle\widetilde{\mathbf u}(\cdot,t),\mathbf w\rangle_\T=\langle\widetilde{\mathbf u}(\cdot,t_0),\mathbf w\rangle_\T\quad \forall\, \mathbf w\in \mathbf H^0_{\#}, \ \forall\, t_0\in[0,T].
$$

(iii) There exists the associated pressure $p\in L_2(0,T;\dot{H}_{\#}^{-n/2+1})$ that for the given $\mathbf u$ is the unique solution of equation \eqref{NS-problem-div0} in this space. 
\end{lemma}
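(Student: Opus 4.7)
The plan is to read off the regularities in (i) from the weak equation \eqref{NS-eq:mik51a} term by term, deduce (ii) by the Lions--Magenes continuity theorem on the interpolation scale, and recover the pressure in (iii) by applying the gradient projector $\mathbb P_g$ of the Helmholtz--Weyl decomposition to \eqref{NS-problem-div0}. I would establish \eqref{E4.13A} first, then derive \eqref{E4.13}, \eqref{E4.13B}, and finally the energy identity \eqref{E4.29-3an} by duality.

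For \eqref{E4.13A} I use $\div\mathbf u=0$ to rewrite $\langle(\mathbf u\cdot\nabla)\mathbf u,\mathbf w\rangle_\T=-\int_\T u_iu_j\,\partial_jw_i\,d\mathbf x$, then apply H\"older with exponents $(2,\,2n/(n-2),\,n)$ and the critical periodic Sobolev embeddings $\dot{\mathbf H}_\#^1\hookrightarrow\mathbf L_{2n/(n-2)\#}$ together with $\dot H_\#^{n/2-1}\hookrightarrow L_{n\#}$ (with the dimension-specific adjustment $\|\mathbf u\|_{L_{4\#}}^2\le C\|\mathbf u\|_{L_{2\#}}\|\nabla\mathbf u\|_{L_{2\#}}$, i.e. Ladyzhenskaya, in the borderline case $n=2$) to obtain
\begin{equation*}
\|(\mathbf u\cdot\nabla)\mathbf u\|_{\dot{\mathbf H}_\#^{-n/2}}\le C\,\|\mathbf u\|_{\mathbf L_{2\#}}\|\mathbf u\|_{\dot{\mathbf H}_\#^1}.
\end{equation*}
Squaring and integrating in time using $\mathbf u\in L_\infty(0,T;\dot{\mathbf H}_{\#\sigma}^0)\cap L_2(0,T;\dot{\mathbf H}_{\#\sigma}^1)$ yields \eqref{E4.13A}. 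For \eqref{E4.13}, the continuity \eqref{NS-a-1-v2-S-0v} of $a_\T$ shows that $\mathbf w\mapsto\langle\mathbf f(\cdot,t),\mathbf w\rangle_\T-a_\T(t;\mathbf u(\cdot,t),\mathbf w)$ defines an element of $\dot{\mathbf H}_{\#\sigma}^{-1}$ with $L_2(0,T;\dot{\mathbf H}_{\#\sigma}^{-1})$-norm controlled by $\|\mathbf f\|_{L_2(0,T;\dot{\mathbf H}_\#^{-1})}+\|\mathbb A\|\,\|\mathbf u\|_{L_2(0,T;\dot{\mathbf H}_{\#\sigma}^1)}$, and equation \eqref{NS-eq:mik51a} identifies this functional with $\mathbf{Du}$. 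Subtracting $\mathbb P_\sigma[(\mathbf u\cdot\nabla)\mathbf u]$ and using the boundedness of $\mathbb P_\sigma$ on $\dot{\mathbf H}_\#^{-n/2}$ together with the embedding $\dot{\mathbf H}_{\#\sigma}^{-1}\hookrightarrow\dot{\mathbf H}_{\#\sigma}^{-n/2}$ (valid for $n\ge 2$) gives \eqref{E4.13B}, and hence $\mathbf u\in W^1(0,T;\dot{\mathbf H}_{\#\sigma}^1,\dot{\mathbf H}_{\#\sigma}^{-n/2})$. Identity \eqref{E4.29-3an} is then the Lions--Magenes integration-by-parts formula on the interpolation scale: the norm $\|\cdot\|_{\dot{\mathbf H}_{\#\sigma}^{-(n-2)/4}}$ coincides with that of $[\dot{\mathbf H}_{\#\sigma}^1,\dot{\mathbf H}_{\#\sigma}^{-n/2}]_{1/2}$ realised through the diagonal isomorphism $\Lambda_\#^r$ of \eqref{3.23}, while the three displayed dual-pairing expressions are equal because $\Lambda_\#^r$ is formally self-adjoint with respect to $\langle\cdot,\cdot\rangle_\T$.

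Statement (ii) is the classical continuity embedding $W^1(0,T;X,Y)\hookrightarrow\mathcal C^0([0,T];[X,Y]_{1/2})$ applied with $X=\dot{\mathbf H}_{\#\sigma}^1$, $Y=\dot{\mathbf H}_{\#\sigma}^{-n/2}$. The $\mathbf H^0_\#$-weak continuity of $\widetilde{\mathbf u}$ is then recovered from the essential boundedness of $\mathbf u$ in $\dot{\mathbf H}_{\#\sigma}^0$ via a standard $\varepsilon$-approximation argument: given $\mathbf w\in\mathbf H^0_\#$, the Helmholtz--Weyl decomposition splits $\mathbf w=\mathbf w_\sigma+\mathbf w_g+\widehat{\mathbf w}(\mathbf 0)$, the last two pieces pair to zero with $\widetilde{\mathbf u}$ (which is zero-mean and divergence-free), and $\mathbf w_\sigma\in\dot{\mathbf H}_{\#\sigma}^0$ is approximated in $\mathbf L_{2\#}$ by elements of the dense subspace $\dot{\mathbf H}_{\#\sigma}^{(n-2)/4}$ on which pointwise continuity is automatic from part~(ii) first sentence. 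For (iii), applying $\mathbb P_g$ to the distributional identity \eqref{NS-problem-div0} and using $\mathbb P_g\partial_t\mathbf u=\mathbf 0$, $\mathbb P_g\nabla p=\nabla p$ gives
\begin{equation*}
\nabla p=\mathbb P_g\!\left[\mathbf f+\bs{\mathfrak L}\mathbf u-(\mathbf u\cdot\nabla)\mathbf u\right]\in L_2(0,T;\dot{\mathbf H}_{\#g}^{-n/2})
\end{equation*}
by part~(i) together with the boundedness of $\mathbb P_g$; hence $p\in L_2(0,T;\dot H_\#^{-n/2+1})$, uniqueness being enforced by the zero-mean normalisation $\langle p,1\rangle_\T=0$ encoded in $\dot H_\#^{-n/2+1}$.

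The main technical obstacle is the borderline periodic Sobolev embedding $\dot H_\#^{n/2-1}\hookrightarrow L_{n\#}$ driving the convective estimate: it is scale-critical and is precisely what dictates the exponent $-n/2$ appearing everywhere in the statement; dimension $n=2$ must be handled separately via the Ladyzhenskaya inequality because $\dot{\mathbf H}_\#^1\not\hookrightarrow\mathbf L_{\infty\#}$. A secondary but more conceptual point is to identify $[\dot{\mathbf H}_{\#\sigma}^1,\dot{\mathbf H}_{\#\sigma}^{-n/2}]_{1/2}=\dot{\mathbf H}_{\#\sigma}^{-(n-2)/4}$ rigorously on the zero-mean, divergence-free scale; however, since $\Lambda_\#^r$ acts diagonally on Fourier coefficients, the interpolation is effectively a relabelling of indices, which makes this step routine rather than difficult.
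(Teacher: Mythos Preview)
Your proposal is correct and follows the same overall architecture as the paper: read the regularities of $\mathbf{Du}$, $(\mathbf u\cdot\nabla)\mathbf u$, and $\mathbf u'$ term by term from \eqref{NS-eq:mik51a}, invoke the Lions--Magenes embedding $W^1(0,T;X,Y)\hookrightarrow\mathcal C^0([0,T];[X,Y]_{1/2})$ for the continuity, and recover the pressure via $\mathbb P_g$ and the gradient isomorphism.

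The one genuine technical difference is in the convective estimate \eqref{E4.13A}. The paper writes $(\mathbf u\cdot\nabla)\mathbf u=\div(\mathbf u\otimes\mathbf u)$ and applies the Sobolev--space multiplication theorem (Theorem~\ref{RS-T1-S4.6.1}(b) with $s_1=s_2=1/2$) followed by the interpolation inequality $\|\mathbf u\|_{\mathbf H_\#^{1/2}}^2\le\|\mathbf u\|_{\mathbf H_\#^0}\|\mathbf u\|_{\mathbf H_\#^1}$, obtaining the bound uniformly in $n\ge 2$ without case distinction. Your route via H\"older with exponents $(2,\,2n/(n-2),\,n)$ and the critical embeddings $\dot{\mathbf H}_\#^1\hookrightarrow\mathbf L_{2n/(n-2)\#}$, $\dot H_\#^{n/2-1}\hookrightarrow L_{n\#}$, plus Ladyzhenskaya for $n=2$, is more elementary (no black-box product theorem) but costs a dimension split. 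Both yield the same final inequality $\|(\mathbf u\cdot\nabla)\mathbf u\|_{\dot{\mathbf H}_\#^{-n/2}}\le C\|\mathbf u\|_{\mathbf L_{2\#}}\|\mathbf u\|_{\dot{\mathbf H}_\#^1}$. For the weak continuity in (ii) the paper simply cites Temam's lemma (Lemma~\ref{L1.4Tem}), whereas your density argument with the Helmholtz--Weyl splitting of the test function is essentially a proof of that lemma specialised to the present setting; either is fine.
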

\begin{proof}
(i) By  \eqref{NS-eq:mik51a} we obtain
\begin{multline*}
\left|\langle \mathbf{Du}(\cdot,t),{\mathbf w}\rangle _{\T }\right|\le
|a_{\T}(t;{\mathbf u},{\mathbf w})| + |\langle\mathbf{f}(\cdot,t), \mathbf w\rangle_{\T}|
\\
\le \|\mathbb A\| \|\mathbf u(\cdot,t)\|_{{\mathbf H}_{\#}^1} \|\mathbf w\|_{{\mathbf H}_{\#}^1}
+\|\mathbf{f}(\cdot,t)\|_{{\mathbf H}_{\#}^{-1}} \|\mathbf w\|_{{\mathbf H}_{\#}^1},\ 
\text{for a.e. } t\in(0,T),\
\forall\,\mathbf w\in \dot{\mathbf H}_{\#\sigma}^{1}.
\end{multline*}
In addition, $\div\,\mathbf{Du}:=\div\,{\mathbf u}'+\div\,\mathbb P_\sigma[({\mathbf u}\cdot \nabla ){\mathbf u}]=0$.
Hence
$\|\mathbf{Du}(\cdot,t)\|_{\dot{\mathbf H}_{\#\sigma}^{-1}}\le \|\mathbb A\| \|\mathbf u(\cdot,t)\|_{{\mathbf H}_{\#}^1}
+\|\mathbf{f}(\cdot,t)\|_{{\mathbf H}_{\#}^{-1}}$ for a.e.  $t\in(0,T)$ and thus\\
$$\|\mathbf{Du}\|_{L_2(0,T;\dot{\mathbf H}_{\#\sigma}^{-1})}\le \|\mathbb A\| \|\mathbf u\|_{L_2(0,T;{\mathbf H}_{\#}^1)}
+\|\mathbf{f}\|_{L_2(0,T;{\mathbf H}_{\#}^{-1})}$$
which implies inclusions \eqref{E4.13}.

By  the multiplication Theorem \ref{RS-T1-S4.6.1} and the Sobolev interpolation inequality, we obtain
\begin{multline}\label{E4.37A}
\|(\mathbf u\cdot \nabla )\mathbf u\|_{\dot{\mathbf H}_{\#}^{-n/2}}
=\left\|\div(\mathbf u\otimes \mathbf u)\right\|_{{\mathbf H}_{\#}^{-n/2}}
\le \left\|\mathbf u\otimes \mathbf u\right\|_{(H_{\#}^{1-n/2})^{n\times n}}\\
\le C_*(1/2,1/2,n)\|\mathbf u\|^2_{{\mathbf H}_{\#}^{1/2}}
\le C_*(1/2,1/2,n)\|\mathbf u\|_{{\mathbf H}_{\#}^{0}}\|\mathbf u\|_{{\mathbf H}_{\#}^1}.
\end{multline}
Thus
$$
\|({\mathbf u}\cdot \nabla ){\mathbf u}\|_{L_2(0,T;\dot{\mathbf H}_{\#}^{-n/2})}
\le C_*(1/2,1/2,n)\|\mathbf u\|_{L_\infty(0,T;{\mathbf H}_{\#}^{0})}\|\mathbf u\|_{L_2(0,T;{\mathbf H}_{\#}^1)},
$$
which implies inclusions \eqref{E4.13A}.
Further,
\begin{align*}
\|\mathbb P_\sigma [({\mathbf u}\cdot \nabla ){\mathbf u}]\|_{L_2(0,T;\dot{\mathbf H}_{\#\sigma}^{-n/2})}
\le\|({\mathbf u}\cdot \nabla ){\mathbf u}\|_{L_2(0,T;\dot{\mathbf H}_{\#}^{-n/2})}
\le C_*(1/2,1/2,n)\|\mathbf u\|_{L_\infty(0,T;{\mathbf H}_{\#}^{0})}\|\mathbf u\|_{L_2(0,T;{\mathbf H}_{\#}^1)},
\end{align*}
implying that $\mathbb P_\sigma [({\mathbf u}\cdot \nabla ){\mathbf u}]\in{L_2(0,T;\dot{\mathbf H}_{\#\sigma}^{-n/2})}$. 
Then the first inclusion in \eqref{E4.13} leads to the inclusion ${\mathbf u}'\in{L_2(0,T;\dot{\mathbf H}_{\#\sigma}^{-n/2})}$ and hence to  inclusions \eqref{E4.13B}.

(ii) Since $\mathbf u\in L_{2}(0,T;\dot{\mathbf H}_{\#\sigma}^{1})$ and $\mathbf u'\in L_{2}(0,T;\dot{\mathbf H}_{\#\sigma}^{-n/2})$, 
relations \eqref{E4.29-3an} are implied by Lemma \ref{L4.9}(i).
Moreover, Theorem \ref{LM-T3.1} implies that $\mathbf u$ is almost everywhere on $[0,T]$ equal to a function $\widetilde{\mathbf u}\in \mathcal C^0([0,T];\dot{\mathbf H}_{\#\sigma}^{-(n-2)/4})$.

We have that $\widetilde{\mathbf u} \in L_\infty(0,T;\dot{\mathbf H}_{\#\sigma}^{0})$, $\widetilde{\mathbf u}\in \mathcal C^0([0,T];\dot{\mathbf H}_{\#\sigma}^{-(n-2)/4})$ and 
$\dot{\mathbf H}_{\#\sigma}^{0}\subset \dot{\mathbf H}_{\#\sigma}^{-(n-2)/4}$ with continuous injection. 
Then Lemma \ref{L1.4Tem} (taken from \cite[Chapter 3, Lemma 1.4]{Temam2001})  implies that $\widetilde{\mathbf u}$ is 
$\dot{\mathbf H}_{\#\sigma}^{0}$-weakly continuous in time.

(iii) The associated pressure $p$ satisfies \eqref{NS-problem-div0} that after applying the projector $\mathbb P_g$ can be re-written as
\begin{align}
\label{Eq-p}
\nabla p&=\mathbf{F},
\end{align}
where
\begin{align}
\mathbf F:=\mathbb P_g[\mathbf{f} - \partial_t\mathbf u +\bs{\mathfrak L}{\mathbf u}-({\mathbf u}\cdot \nabla ){\mathbf u}]
=\mathbb P_g\mathbf{f} +\mathbb P_g\bs{\mathfrak L}{\mathbf u}-\mathbb P_g[({\mathbf u}\cdot \nabla ){\mathbf u}]
\in L_2(0,T;\dot{\mathbf H}_{\# g}^{-n/2})
\end{align}
due to the first inclusion in \eqref{E4.13A}.
By Lemma \ref{div-grad-is} for gradient, with $s=1-n/2$, equation \eqref{Eq-p} has a unique solution $p$ in 
$L_2(0,T;\dot{H}_{\#}^{-n/2+1})$.
\end{proof}

Note  that inclusions \eqref{E4.13} do not generally imply that ${\mathbf u}'(\cdot,t)$ and 
$\mathbb P_\sigma[({\mathbf u}\cdot \nabla ){\mathbf u}](\cdot,t)$ belong to $\dot{\mathbf H}_{\#\sigma}^{-1}$ for a.e. $t\in [0,T]$, but only that their sum does. 
This is why the first dual product in \eqref{NS-eq:mik51a} is not written as the sum of the two respective dual products.

\begin{remark}\label{R4.3}
The initial condition \eqref{NS-eq:mik51in} should be understood for the function $\mathbf u$ re-defined as the function 
$\widetilde{\mathbf u}$ that was introduced in Lemma \ref{L4.2}(ii) and is $\mathbf H^0_{\#}$-weakly continuous in time.
\end{remark}

\section{Existence for evolution spatially-periodic anisotropic Navier-Stokes problem}

In this section, we prove solution existence for the evolution anisotropic incompressible Navier-Stokes systems, accommodating to anisotropy, variable coefficients and arbitrary $n\ge 2$ the approaches presented, e.g., in \cite[Chapter 1, Section 6.5]{Lions1969}, \cite[Chapter 8]{Constantin-Foias1988}, \cite[Chapter 3]{Temam1995}, \cite[Chapter 3, Section 3]{Temam2001}, \cite[Section 4]{RRS2016} for the constant-coefficient isotropic Navier-Stokes equations. 

\begin{theorem}
\label{NS-problemTh-sigma}
Let $n\ge 2$ and $T>0$. 
Let $a_{ij}^{\alpha \beta }\in L_\infty(0,T;L_{\infty\#})$
and the relaxed ellipticity condition \eqref{mu} hold.  
Let ${\mathbf f}\in L_2(0,T;\dot{\mathbf H}_\#^{-1})$, $\mathbf u^0\in \dot{\mathbf H}_{\#\sigma}^{0}$.

(i) Then there exists a weak solution 
${\mathbf u}\in  L_{\infty}(0,T;\dot{\mathbf H}_{\#\sigma}^{0})\cap L_2(0,T;\dot{\mathbf H}_{\#\sigma}^{1})$
of the anisotropic Navier-Stokes 
initial value problem  \eqref{NS-problem-div0}--\eqref{NS-problem-div0-IC} in the sense of Definition \ref{D6.1}.
Particularly,
$
\lim_{t\to 0}\langle{\mathbf u}(\cdot,t),\mathbf v\rangle_\T
=\langle{\mathbf u}^0,\mathbf v\rangle_\T\quad \forall\, \mathbf v\in \dot{\mathbf H}^0_{\#\sigma}.
$
There exists also the unique pressure $p\in L_2(0,T;\dot{H}_{\#}^{-n/2+1})$ associated with the obtained $\mathbf u$, that is the solution of equation \eqref{NS-problem-div0} in $L_2(0,T;\dot{H}_{\#}^{-n/2+1})$. 

(ii) Moreover, $\mathbf u$ satisfies  the following (strong) energy inequality,
\begin{align}
\| {\mathbf u}(\cdot,t)\| _{\mathbf L_{2\#}}^2
+2\int_{t_0}^{t}a_{\T}({\mathbf u}(\cdot,\tau),{\mathbf u}(\cdot,\tau)) d\tau
\label{NS-eq:mik51ai=a}
\le  \| {\mathbf u}(\cdot,t_0)\| ^2_{\mathbf L_{2\#}}
+2\int_{t_0}^{t}\langle\mathbf{f}(\cdot,\tau), {\mathbf u}(\cdot,\tau)\rangle_{\T}d\tau
\end{align}
for any $[t_0,t]\subset[0,T]$. It particularly implies
the standard energy inequality,
\begin{align}
\label{E4.9}
\| {\mathbf u}(\cdot,t)\| _{\mathbf L_{2\#}}^2
+2\int_0^ta_{\T}({\mathbf u}(\cdot,\tau),{\mathbf u}(\cdot,\tau)) d\tau
\le \| {\mathbf u}^0\| ^2_{\mathbf L_{2\#}}
+2\int_0^t\langle\mathbf{f}(\cdot,\tau), {\mathbf u}(\cdot,\tau)\rangle_{\T}\,d\tau
\quad \forall\,
t\in[0,T].
\end{align}

\end{theorem}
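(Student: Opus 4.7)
The plan is to implement the Galerkin method based on the divergence-free eigenfunctions $\{\mathbf w_k\}_{k\in\N}$ of the Bessel-potential operator $\Lambda_\#^2$ acting on $\dot{\mathbf H}_{\#\sigma}^0$, as advertised in the Introduction. Since $\Lambda_\#^2$ is a Fourier multiplier, its eigenfunctions lie in $\bigcap_{s\in\R}\dot{\mathbf H}_{\#\sigma}^s$ and form an orthogonal basis in each $\dot{\mathbf H}_{\#\sigma}^s$ simultaneously, so the $\dot{\mathbf H}_{\#\sigma}^0$-orthogonal projector $P_m$ onto $V_m:=\mathrm{span}\{\mathbf w_1,\ldots,\mathbf w_m\}$ is bounded on every $\dot{\mathbf H}_{\#\sigma}^s$ uniformly in $m$. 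I look for the approximation $\mathbf u_m(\mathbf x,t)=\sum_{k=1}^m g_{km}(t)\mathbf w_k(\mathbf x)$ satisfying, for $j=1,\ldots,m$ and a.e. $t\in(0,T)$,
\begin{equation*}
\langle\mathbf u_m'(\cdot,t),\mathbf w_j\rangle_\T+\langle(\mathbf u_m\cdot\nabla)\mathbf u_m,\mathbf w_j\rangle_\T+a_\T(t;\mathbf u_m,\mathbf w_j)=\langle\mathbf f(\cdot,t),\mathbf w_j\rangle_\T,
\end{equation*}
together with $\mathbf u_m(\cdot,0)=P_m\mathbf u^0$. This is a Carath\'eodory system of ODEs for the coefficients $(g_{km})$ with polynomial (hence locally Lipschitz) nonlinearity and $L_2(0,T)$-integrable data, so a unique local solution exists.

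Testing against $\mathbf u_m$ annihilates the trilinear term, because $\div\mathbf u_m=0$ yields $\langle(\mathbf u_m\cdot\nabla)\mathbf u_m,\mathbf u_m\rangle_\T=0$, giving the Galerkin energy equality
\begin{equation*}
\tfrac{d}{dt}\|\mathbf u_m(\cdot,t)\|_{\mathbf L_{2\#}}^2+2a_\T(t;\mathbf u_m,\mathbf u_m)=2\langle\mathbf f(\cdot,t),\mathbf u_m(\cdot,t)\rangle_\T.
\end{equation*}
Combining the coercivity \eqref{NS-a-1-v2-S-}, Cauchy--Schwarz, Young's inequality and Gronwall's lemma yields, uniformly in $m$, a bound on $\mathbf u_m$ in $L_\infty(0,T;\dot{\mathbf H}_{\#\sigma}^0)\cap L_2(0,T;\dot{\mathbf H}_{\#\sigma}^1)$, which in turn guarantees global existence on $[0,T]$. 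Repeating the reasoning of Lemma~\ref{L4.2}(i) at the Galerkin level, together with the trilinear estimate \eqref{E4.37A} and the uniform boundedness of $P_m$, produces a uniform bound on $\mathbf u_m'$ in $L_2(0,T;\dot{\mathbf H}_{\#\sigma}^{-n/2})$.

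\textbf{The main obstacle} is the passage to the limit in the trilinear term, since weak convergence alone is insufficient. Banach--Alaoglu extracts a subsequence along which $\mathbf u_m$ converges to some $\mathbf u$ weak-$*$ in $L_\infty(0,T;\dot{\mathbf H}_{\#\sigma}^0)$, weakly in $L_2(0,T;\dot{\mathbf H}_{\#\sigma}^1)$, and $\mathbf u_m'\rightharpoonup\mathbf u'$ in $L_2(0,T;\dot{\mathbf H}_{\#\sigma}^{-n/2})$. Using the compact embedding $\dot{\mathbf H}_{\#\sigma}^1\hookrightarrow\hookrightarrow\dot{\mathbf H}_{\#\sigma}^0\hookrightarrow\dot{\mathbf H}_{\#\sigma}^{-n/2}$, the Aubin--Lions--Simon compactness theorem delivers $\mathbf u_m\to\mathbf u$ strongly in $L_2(0,T;\mathbf L_{2\#})$. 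For fixed $\mathbf w\in V_k$ and $\varphi\in\mathcal C^1_c((0,T))$ the Galerkin identity is rewritten, exploiting $\div\mathbf u_m=0$, as
\begin{equation*}
-\int_0^T\langle\mathbf u_m,\mathbf w\rangle_\T\varphi'\,dt-\int_0^T\langle(\mathbf u_m\cdot\nabla)\mathbf w,\mathbf u_m\rangle_\T\varphi\,dt+\int_0^T a_\T(t;\mathbf u_m,\mathbf w)\varphi\,dt=\int_0^T\langle\mathbf f,\mathbf w\rangle_\T\varphi\,dt.
\end{equation*}
Because $\mathbf w\in\bigcap_s\dot{\mathbf H}_{\#\sigma}^s$, the strong $L_2$-convergence of $\mathbf u_m$ is enough to pass to the limit in the quadratic term, while weak convergence handles the linear ones; density of $\bigcup_k V_k$ in $\dot{\mathbf H}_{\#\sigma}^1$ then promotes the identity to every test function in $\dot{\mathbf H}_{\#\sigma}^1$, giving \eqref{NS-eq:mik51a}.

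For the remaining claims, Lemma~\ref{L4.2}(ii) supplies the $\mathbf L_{2\#}$-weakly continuous representative $\widetilde{\mathbf u}$ of $\mathbf u$ on $[0,T]$; integrating the Galerkin identity against $\mathbf w\varphi(t)$ with $\varphi(0)=1$, $\varphi(T)=0$, passing to the limit, and using $P_m\mathbf u^0\to\mathbf u^0$ in $\mathbf L_{2\#}$, identifies $\widetilde{\mathbf u}(\cdot,0)=\mathbf u^0$, which is \eqref{NS-eq:mik51in} and yields the stated weak convergence as $t\to 0$. The associated pressure $p\in L_2(0,T;\dot H_\#^{-n/2+1})$ is supplied directly by Lemma~\ref{L4.2}(iii). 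Finally, integrating the Galerkin energy equality from $t_0$ to $t$ and passing to the $\liminf$, using weak lower semicontinuity of $\|\cdot\|_{\mathbf L_{2\#}}^2$ at each Lebesgue point $t$ (and the equality at $t_0=0$ coming from $P_m\mathbf u^0\to\mathbf u^0$) together with lower semicontinuity of the coercivity integral, yields the strong energy inequality \eqref{NS-eq:mik51ai=a}; the standard inequality \eqref{E4.9} is the special case $t_0=0$.
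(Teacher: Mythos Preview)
Your proposal is correct and follows essentially the same Faedo--Galerkin approach as the paper: the same eigenfunction basis of $\Lambda_\#$ (your $\Lambda_\#^2$ has the same eigenfunctions), the same energy estimates, the same Aubin--Lions compactness to obtain strong $L_2(0,T;\mathbf L_{2\#})$ convergence, and the same limit passage in the trilinear term via the rewriting $\langle(\mathbf u_m\cdot\nabla)\mathbf u_m,\mathbf w\rangle_\T=-\langle(\mathbf u_m\cdot\nabla)\mathbf w,\mathbf u_m\rangle_\T$. One minor point: your energy-inequality argument establishes \eqref{NS-eq:mik51ai=a} only for a.e.\ $t_0,t$ (where the strong $L_2$ convergence gives pointwise convergence of norms), and the paper then explicitly extends this to \emph{all} $[t_0,t]\subset[0,T]$ using the $\mathbf L_{2\#}$-weak continuity of $\mathbf u$ and weak lower semicontinuity of the norm---you should make that extension step explicit.
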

\begin{proof}
We prove the solution existence using the Faedo-Galerkin algorithm, cf., e.g., \cite[Chapt. 6, Sections 3, 6]{Ladyzhenskaya1969}, \cite[Chapter 1, Section 6.4]{Lions1969}, \cite[Chapter 3, Section 3.3]{Temam1995},  \cite[Chapter 3, Section 3]{Temam2001}, \cite[Section 4]{RRS2016}.
\paragraph{(a)} Let $\{\mathbf w_l\}=\mathbf w_1, \mathbf w_2,\ldots,\mathbf w_l,\ldots$ be 
the sequence of real orthonormal eigenfunctions of the Bessel potential operator $\Lambda_\#$ in $\dot{\mathbf H}^0_{\#\sigma}$, 
see Appendix \ref{BPOS}. 
This sequence constitutes 
an orthonormal basis in $\dot{\mathbf H}^0_{\#\sigma}$
and
is similar to a periodic version of the special basis employed in \cite[Chapter 1, Corollary 6.1]{Lions1969}. 
It belongs to $\dot{\mathbf C}^\infty_{\#\sigma}$ 
and can be explicitly expressed in terms of the Fourier harmonics,
see Remark \ref{RE.2}. 
Such choice of the linear independent functions particularly facilitates the proof of existence for arbitrary dimension $n\ge 2$.  
Another possible choice is given by the eigenfunctions of the isotropic Stokes operator in $\dot{\mathbf H}^0_{\#\sigma}$, cf. \cite[Section 2.2]{Temam1995}, \cite[Theorem 2.24]{RRS2016}.

For each integer $m\ge 1$, let us look for a solution
\begin{align}\label{NS-E5.14}
\mathbf u_m(\mathbf x,t)=\sum_{l=1}^m\eta_{l,m}(t)\mathbf w_l, \quad \eta_{l,m}(t)\in\R,
\end{align}
of the following discrete analogue of the initial-variational problem \eqref{NS-eq:mik51a}--\eqref{NS-eq:mik51in},
\begin{align}
\label{NS-eq:mik51ad}
&\langle {\mathbf u}'_m,{\mathbf w}_k\rangle _{\T }
+a_{\T}({\mathbf u}_m,{\bf w}_k)
+\langle({\mathbf u}_m\cdot \nabla ){\mathbf u}_m,{\bf w}_k\rangle _{\T}
=\langle\mathbf{f}, \mathbf w_k\rangle_{\T},\ \text{a.e. } t\in(0,T),\ \forall\, k\in \{1,\ldots,m\},\\
\label{NS-eq:mik51ind}
&\langle\mathbf u_m, \mathbf w_k\rangle_{\T}(\cdot,0)=\langle\mathbf u^0, \mathbf w_k\rangle_{\T},\quad \forall\, k\in \{1,\ldots,m\}.
\end{align} 
For a fixed $m$, equations \eqref{NS-eq:mik51ad}--\eqref{NS-eq:mik51ind} constitute an initial value problem for the nonlinear system of ordinary differential equations 
for unknowns $\eta_{l,m}(t)$,  $\ell\in \{1,\ldots,m\}$,
\begin{align}
\label{NS-eq:mik51ae}
&\sum_{l=1}^m\langle \mathbf w_l,{\mathbf w}_k\rangle _{\T}\,\partial_t\eta_{l,m}(t)
+\sum_{l=1}^m a_{\T}(t;{\mathbf w}_l,{\bf w}_k)\,\eta_{l,m}(t)
+\sum_{l,j=1}^m\langle({\mathbf w}_l\cdot \nabla ){\mathbf w}_j,{\bf w}_k\rangle _{\T}\eta_{l,m}(t)\eta_{j,m}(t)
\nonumber\\
&\hspace{21em}
=\langle\mathbf{f}, \mathbf w_k\rangle_{\T}\,,\ \text{a.e. } t\in(0,T),\quad \forall\, k\in \{1,\ldots,m\},\\
\label{NS-eq:mik51ine}
&\sum_{l=1}^m\langle\mathbf w_l, \mathbf w_k\rangle_{\T}\,\eta_{l,m}(0)=\langle\mathbf u^0, \mathbf w_k\rangle_{\T},\quad \forall\, k\in \{1,\ldots,m\}.
\end{align}
We have $\langle\mathbf{f}, \mathbf w_k\rangle_{\T}\in L_2(0,T)$ and
due to the orthonormality of the functions $\mathbf w_l$, we have $\langle \mathbf w_l,{\mathbf w}_k\rangle _{\T}=\delta_{\bs\ell k}$. 
Then by the Carath\'eodory existence theorem, see, e.g. \cite[Theorem 5.1]{Hale1980}, the ODE initial value problem \eqref{NS-eq:mik51ae}--\eqref{NS-eq:mik51ine} has an absolutely continuous solution $\eta_{l,m}(t)$, $l=1,\ldots,m$, on an interval $[0,T_m]$, $0<T_m\le T$. 

Multiplying equations \eqref{NS-eq:mik51ae} by $\eta_{k,m}$ and summing them up over $k\in \{1,\ldots,m\}$, and also doing the same with equations  \eqref{NS-eq:mik51ine}, we obtain 
\begin{align}
\label{NS-eq:mik51af}
&\langle \partial_t{\mathbf u}_m,{\mathbf u}_m\rangle _{\T }
+a_{\T}({\mathbf u}_m,{\mathbf u}_m)
+\langle({\mathbf u}_m\cdot \nabla ){\mathbf u}_m, {\mathbf u}_m\rangle_{\T}
=\langle\mathbf{f}, {\mathbf u}_m\rangle_{\T},\ \text{a.e. } t\in(0,T_m),\\
\label{NS-eq:mik51inf}
&\langle\mathbf u_m(\cdot,0), {\mathbf u}_m(\cdot,0)\rangle_{\T}=\langle\mathbf u^0, {\mathbf u}_m(\cdot,0)\rangle_{\T}.
\end{align} 
By equality \eqref{eq:mik55} for the trilinear term, equation \eqref{NS-eq:mik51af} is reduced to  
\begin{align}
\label{NS-eq:mik51af=}
&\frac12\partial_t\| {\mathbf u}_m\| _{\mathbf L_{2\#}}^2
+a_{\T}({\mathbf u}_m,{\mathbf u}_m)
=\langle\mathbf{f}, {\mathbf u}_m\rangle_{\T},\ \text{a.e. } t\in(0,T_m),
\end{align}

Inequality \eqref{NS-a-1-v2-S-}  for the quadratic form $a_{\T}$ and Yong's inequality for the right hand side of \eqref{NS-eq:mik51af=} imply
\begin{align}
\label{NS-eq:mik51ag}
\partial_t\| {\mathbf u}_m\| _{\mathbf L_{2\#}}^2
+\frac12 C_{\mathbb A}^{-1}\|{\mathbf u}_m\|_{\dot{\mathbf H}_\#^1}^2
\le\partial_t\| {\mathbf u}_m\| _{\mathbf L_{2\#}}^2
&+2a_{\T}({\mathbf u}_m,{\mathbf u}_m)
=2\langle\mathbf{f}, {\mathbf u}_m\rangle_{\T}
\le 2\|\mathbf f\|_{\dot{\mathbf H}_\#^{-1}} \|\mathbf u_m\|_{\dot{\mathbf H}_\#^1}\nonumber\\
&\le  \frac14 C_{\mathbb A}^{-1}\|\mathbf u_m\|^2_{\dot{\mathbf H}_\#^1} +4C_{\mathbb A}\|\mathbf f\|^2_{\dot{\mathbf H}_\#^{-1}},\ \text{a.e. } t\in(0,T_m).
\end{align}
Equation \eqref{NS-eq:mik51inf} implies
\begin{align}
\label{NS-eq:mik51ing}
\| {\mathbf u}_m(\cdot,0)\| _{\mathbf L_{2\#}}^2
=\langle\mathbf u^0, {\mathbf u}_m(\cdot,0)\rangle_{\T}
\le \| {\mathbf u}^0\| _{\mathbf L_{2\#}} \| {\mathbf u}_m(\cdot,0)\| _{\mathbf L_{2\#}}.
\end{align} 

Hence \eqref{NS-eq:mik51ag} and \eqref{NS-eq:mik51ing} lead to
\begin{align}
\label{NS-eq:mik51ah}
&\partial_t\| {\mathbf u}_m\| _{\mathbf L_{2\#}}^2
+ \frac14 C_{\mathbb A}^{-1}\|{\mathbf u}_m\|_{\dot{\mathbf H}_\#^1}^2
\le 4 C_{\mathbb A}\|\mathbf f\|^2_{\dot{\mathbf H}_\#^{-1}},\ \text{a.e. } t\in(0,T_m),
\\
\label{NS-eq:mik51inh}
&\| {\mathbf u}_m(\cdot,0)\| _{\mathbf L_{2\#}}
\le \| {\mathbf u}^0\| _{\mathbf L_{2\#}}.
\end{align} 
Integrating \eqref{NS-eq:mik51ah}, we get
\begin{align}
\label{NS-eq:mik51ai}
\| {\mathbf u}_m(\cdot,t)\| _{\mathbf L_{2\#}}^2
+\frac14 C_{\mathbb A}^{-1}\int_0^t\|{\mathbf u}_m(\cdot,\tau)\|_{\dot{\mathbf H}_\#^1}^2 d\tau
&\le  \| {\mathbf u}_m(\cdot,0)\| ^2_{\mathbf L_{2\#}}
+4 C_{\mathbb A}\int_0^t\|\mathbf f(\cdot,\tau)\|^2_{\dot{\mathbf H}_\#^{-1}}d\tau
\nonumber\\
&\le  \| {\mathbf u}^0\| ^2_{\mathbf L_{2\#}}
+4 C_{\mathbb A}\|\mathbf f\|^2_{L_2(0,T;\dot{\mathbf H}_\#^{-1})},
\quad t\in[0,T_m].
\end{align}
Estimate \eqref{NS-eq:mik51ai} particularly implies that the ODE initial value problem \eqref{NS-eq:mik51ae}--\eqref{NS-eq:mik51ine} has an absolutely continuous solution $\eta_{l,m}(t)$, $l=1,\ldots,m$, on the whole interval $[0,T]$, where the right hand side $\mathbf f$ 
is prescribed, i.e., we can take $T_m=T$.

Hence from \eqref{NS-eq:mik51ai} we conclude that 
\begin{align}
\label{NS-eq:mik51aj}
&\| {\mathbf u}_m\| ^2_{L_\infty(0,T;\mathbf L_{2\#})}
=\sup_{t\in [0,T]}\| {\mathbf u}_m(\cdot,t)\| _{\mathbf L_{2\#}}^2
\le  \| {\mathbf u}^0\| ^2_{\mathbf L_{2\#}}
+4 C_{\mathbb A}\|\mathbf f\|^2_{L_2(0,T;\dot{\mathbf H}_\#^{-1})},\\
\label{NS-eq:mik51ak}
&\|\mathbf u_m\|^2_{L_2(0,T;\dot{\mathbf H}_\#^{1})}
\le 4C_{\mathbb A}\left( \| {\mathbf u}^0\| ^2_{\mathbf L_{2\#}}
+4 C_{\mathbb A}\|\mathbf f\|^2_{L_2(0,T;\dot{\mathbf H}_\#^{-1})}\right).
\end{align}

 Recall that $\| {\mathbf u}\| _{\mathbf L_{2\#}}=\|\mathbf u\|_{{\mathbf H}_\#^0}$, while 
$\|\mathbf u\|_{{\mathbf H}_\#^s}=\|\mathbf u\|_{\dot{\mathbf H}_{\#\sigma}^s}$ for $\mathbf u\in{\dot{\mathbf H}_{\#\sigma}^s}$.
Estimates \eqref{NS-eq:mik51aj} and \eqref{NS-eq:mik51ak} mean that the sequence $\{\mathbf u_m\}$ is bounded in 
$L_\infty(0,T;\dot{\mathbf H}_{\#\sigma}^{0}))$ and in 
$L_2(0,T;\dot{\mathbf H}_{\#\sigma}^{1})$, implying that the sequence has a subsequence still denoted as $\{\mathbf u_m\}$  that converges weakly in $L_2(0,T;\dot{\mathbf H}_{\#\sigma}^{1})$ and weakly-star in $L_\infty(0,T;\dot{\mathbf H}_{\#\sigma}^{0})$ to a function 
$\mathbf u \in L_\infty(0,T;\dot{\mathbf H}_{\#\sigma}^{0})\cap L_2(0,T;\dot{\mathbf H}_{\#\sigma}^{1})$.
Note also that inequality \eqref{NS-eq:mik51aj} implies
also that
\begin{align*}%\label{NS-eq:mik51aj2}
&
\| {\mathbf u}(\cdot,t)\| _{\dot{\mathbf H}_{\#\sigma}^{0}}^2
\le  \| {\mathbf u}^0\| ^2_{\dot{\mathbf H}_{\#\sigma}^{0}}
+4 C_{\mathbb A}\|\mathbf f\|^2_{L_2(0,T;\dot{\mathbf H}_\#^{-1})}\,,\ \text{a.e. } t\in(0,T).
\end{align*}
\paragraph{(b)} Let us also prove that the sequence $\{\mathbf u'_m\}$ is bounded in $L_2(0,T;\dot{\mathbf H}_{\#\sigma}^{-n/2})$, cf. \cite[Chapter 1, Section 6.4]{Lions1969}. 
To this end, we multiply equations \eqref{NS-eq:mik51ad} by $\mathbf w_k$ and sum them up over $k\in \{1,\ldots,m\}$, to obtain
\begin{align}\label{E4.25Pm}
\mathbf u'_m -P_m\bs{\mathfrak L}\mathbf u_m+P_m[({\mathbf u}_m\cdot \nabla ){\mathbf u}_m]&=P_m\mathbf{f}\,,\ \text{a.e. } t\in(0,T),
\end{align}
where $P_m$ is the projector operator 
from ${\mathbf H}^{-n/2}_{\#\sigma}$ to ${\rm Span}\{\mathbf w_1,\ldots,\mathbf w_m\}$ 
defined in \eqref{m-Projector}
and we took into account that
\begin{align*}%\label{E4.35A}
P_m \mathbf u'_m=\sum_{k=1}^m\langle\mathbf u'_m,\mathbf w_k\rangle_\T\mathbf w_k
=\sum_{k=1}^m\sum_{l=1}^m\eta'_{l,m}(t)\langle\mathbf w_l,\mathbf w_k\rangle_\T\mathbf w_k
=\sum_{l=1}^m\eta'_{l,m}(t)\mathbf w_l
=\mathbf u'_m.
\end{align*}
Further, due to Theorem \ref{TE.1r}(iii), for any $\mathbf h\in{\mathbf H}_{\#}^{r}$, $r\in\R$, we have
\begin{align}\label{E4.26Pm}
\|P_m \mathbf h\|^2_{\dot{\mathbf H}_{\#\sigma}^{r}}
\le\left\|\mathbf h\right\|^2_{{\mathbf H}_{\#}^{r}}.
\end{align}
By \eqref{E4.26Pm}, \eqref{L-oper} and \eqref{TensNorm} we have
\begin{align*}
\|P_m \bs{\mathfrak L}\mathbf u_m\|^2_{\dot{\mathbf H}_{\#\sigma}^{-n/2}}
\le \|\bs{\mathfrak L}\mathbf u_m\|^2_{\dot{\mathbf H}_{\#\sigma}^{-n/2}}
\le \left\|\bs{\mathfrak L}\mathbf u_m\right\|^2_{{\mathbf H}_{\#}^{-1}}
\le \|\mathbb A\|^2\|\mathbf u_m\|^2_{{\mathbf H}_{\#}^1}
\end{align*}
and then by \eqref{NS-eq:mik51ak},
\begin{multline}\label{E4.27pm}
\|P_m \bs{\mathfrak L}\mathbf u_m\|^2_{L_2(0,T;\dot{\mathbf H}_{\#\sigma}^{-n/2})}
\le\|\bs{\mathfrak L}\mathbf u_m\|^2_{L_2(0,T;\dot{\mathbf H}_{\#\sigma}^{-1})}
\le \|\mathbb A\|^2\|\mathbf u_m\|^2_{L_2(0,T;{\mathbf H}_{\#}^1)}
\\
\le 4\|\mathbb A\|^2 C_{\mathbb A}\left( \| {\mathbf u}^0\| ^2_{\mathbf L_{2\#}}
+4C_{\mathbb A}\|\mathbf f\|^2_{L_2(0,T;\dot{\mathbf H}_\#^{-1})}\right).
\end{multline}
Next, by \eqref{E4.26Pm} we obtain
\begin{align}
\|P_m \mathbf f\|^2_{L_2(0,T;\dot{\mathbf H}_{\#\sigma}^{-n/2})}
\le\|\mathbf f\|^2_{L_2(0,T;\dot{\mathbf H}_{\#\sigma}^{-n/2})}
\le \left\|\mathbf f\right\|^2_{L_2(0,T;{\mathbf H}_{\#}^{-1})}.
\label{E4.29Pm}
\end{align}

For any $\mathbf v_1\in \mathbf H_{\#\sigma}^1$, $\mathbf v_2\in \mathbf H_{\#}^1$,
by  Theorem \ref{RS-T1-S4.6.1}(b) and the Sobolev interpolation inequality, we obtain
\begin{multline*}%\label{E4.37A}
\|(\mathbf v_1\cdot \nabla )\mathbf v_2\|^2_{\dot{\mathbf H}_{\#}^{-n/2}}
=\left\|\div(\mathbf v_1\otimes \mathbf v_2)\right\|^2_{{\mathbf H}_{\#}^{-n/2}}
\le \left\|\mathbf v_1\otimes \mathbf v_2\right\|^2_{(H_{\#}^{1-n/2})^{n\times n}}\\
\le C_*^2(1/2,1/2,n)\|\mathbf v_1\|^2_{{\mathbf H}_{\#}^{1/2}}\|\mathbf v_2\|^2_{{\mathbf H}_{\#}^{1/2}}
\le C_*^2(1/2,1/2,n)\|\mathbf v_1\|_{{\mathbf H}_{\#}^{0}}\|\mathbf v_1\|_{{\mathbf H}_{\#}^1}
\|\mathbf v_2\|_{{\mathbf H}_{\#}^{0}}\|\mathbf v_2\|_{{\mathbf H}_{\#}^1}.
\end{multline*}
Thus
\begin{align*}
\|P_m [({\mathbf u}_m\cdot \nabla ){\mathbf u}_m]\|^2_{\dot{\mathbf H}_{\#\sigma}^{-n/2}}
\le \left\|({\mathbf u}_m\cdot \nabla ){\mathbf u}_m\right\|^2_{{\mathbf H}_{\#}^{-n/2}}
\le C_*^2(1/2,1/2,n)\|\mathbf u\|^2_{{\mathbf H}_{\#}^{0}}\|\mathbf u\|^2_{{\mathbf H}_{\#}^1}
\end{align*}
and then by \eqref{NS-eq:mik51aj} and \eqref{NS-eq:mik51ak},
\begin{align}\label{E4.28Pm}
\|P_m [({\mathbf u}_m\cdot \nabla ){\mathbf u}_m]\|^2_{L_2(0,T;\dot{\mathbf H}_{\#\sigma}^{-n/2})}
&\le C_*^2(1/2,1/2,n)\|\mathbf u_m\|^2_{L_\infty(0,T;{\mathbf H}_{\#}^{0})}\|\mathbf u_m\|^2_{L_2(0,T;{\mathbf H}_{\#}^1)}
\nonumber\\
&\le 4C_*^2(1/2,1/2,n)C_{\mathbb A}\left( \| {\mathbf u}^0\| ^2_{\mathbf L_{2\#}}
+4 C_{\mathbb A}\|\mathbf f\|^2_{L_2(0,T;\dot{\mathbf H}_\#^{-1})}\right)^2.
\end{align}

Equation \eqref{E4.25Pm} and inequalities  \eqref{E4.27pm},  \eqref{E4.29Pm} and \eqref{E4.28Pm} imply that the sequence $\{\mathbf u'_m\}$ is bounded in $L_2(0,T;\dot{\mathbf H}_{\#\sigma}^{-n/2})$ and hence it has a subsequence converging to a function
${\mathbf u}^\dag\in L_2(0,T;\dot{\mathbf H}_{\#\sigma}^{-n/2})$ weakly in this space.

Let us prove that ${\mathbf u}'={\mathbf u}^\dag$.
Indeed, for any $\phi\in \mathcal C^\infty_c(0,T)$ and $\mathbf w\in\dot{\mathbf H}_{\#\sigma}^{n/2}$, evidently, 
$\mathbf v:=\mathbf w \phi\in L_2(0,T;\dot{\mathbf H}_{\#\sigma}^{n/2})=\left(L_2(0,T;\dot{\mathbf H}_{\#\sigma}^{-n/2})\right)^*$
and we have
\begin{multline}\label{E4.40A0}
\int_0^T \langle \mathbf u^\dag(\cdot,t), \mathbf w\rangle_\T \phi(t)dt
=\int_0^T \langle {\mathbf u}^\dag(\cdot,t), \mathbf v(\cdot,t)\rangle_\T dt
\\
=\int_0^T \langle \mathbf u^\dag(\cdot,t)-\mathbf u'_m(\cdot,t), \mathbf v(\cdot,t)\rangle_\T dt
+\int_0^T \langle \mathbf u'_m(\cdot,t), \mathbf w\rangle_\T \phi(t)dt.
\end{multline}
The first integral in the right hand side of \eqref{E4.40A0} tends to zero as $m\to \infty$ due to the weak convergence of $\mathbf u'_m$ to ${\mathbf u}^\dag$ in $L_2(0,T;\dot{\mathbf H}_{\#\sigma}^{n/2})$. 
For the second integral in the right hand side of \eqref{E4.40A0} we obtain,
\begin{multline}\label{E4.40A1}
\int_0^T \langle \mathbf u'_m(\cdot,t), \mathbf w\rangle_\T\phi(t) dt
=-\int_0^T \langle \mathbf u_m(\cdot,t), \mathbf w\rangle_\T\phi'(t) dt
\\
=\int_0^T \langle \mathbf u(\cdot,t)-\mathbf u_m(\cdot,t), \mathbf w\rangle_\T\phi'(t) dt
-\int_0^T \langle \mathbf u(\cdot,t), \mathbf w\rangle_\T\phi'(t) dt
\end{multline}
The first integral in the right hand side of \eqref{E4.40A1} tends to zero as $m\to \infty$ due to the weak convergence of $\mathbf u_m$ to ${\mathbf u}$ in $L_2(0,T;\dot{\mathbf H}_{\#\sigma}^{1})$.
Hence, taking the limits of \eqref{E4.40A0} and \eqref{E4.40A1} as $m\to \infty$, we obtain,
\begin{align*}%\label{E4.40A2}
\int_0^T \langle \mathbf u^\dag(\cdot,t), \mathbf w\rangle_\T \phi(t)dt
=-\int_0^T \langle \mathbf u(\cdot,t), \mathbf w\rangle_\T\phi'(t) dt
=\int_0^T \partial_t\langle \mathbf u(\cdot,t), \mathbf w\rangle_\T\phi(t) dt,
\end{align*}
which implies that $\langle \mathbf u^\dag(\cdot,t), \mathbf w\rangle_\T$ is the distributional derivative in time of $\langle \mathbf u(\cdot,t), \mathbf w\rangle_\T$ and thus as in the proof of Lemma \eqref{L4.9}(ii) the time derivative commutates with the dual product over $\T$, leading to
\begin{align*}
 \langle \mathbf u'(\cdot,t), \mathbf w\rangle_\T
=\partial_t\langle \mathbf u(\cdot,t), \mathbf w\rangle_\T
= \langle \mathbf u^\dag(\cdot,t), \mathbf w\rangle_\T
\end{align*}
in the sense of distributions on $(0,T)$, for any $\mathbf w\in\dot{\mathbf H}_{\#\sigma}^{n/2}$.
Since $\mathbf w$ is an arbitrary test function in $\dot{\mathbf H}_{\#\sigma}^{n/2}$, this implies that 
 $\mathbf u'={\mathbf u}^\dag$ and hence $\mathbf u'\in L_2(0,T;\dot{\mathbf H}_{\#\sigma}^{-n/2})$.

Applying now Theorem \ref{T-A-L} (the Aubin-Lions lemma) with 
$G=\dot{\mathbf H}_{\#\sigma}^{1}$, 
$H=\dot{\mathbf H}_{\#\sigma}^{0}$,
$K=\dot{\mathbf H}_{\#\sigma}^{-n/2}$
and $p=q=2$,
we conclude that  the subsequence $\{\mathbf u_m\}$ can be chosen in such a way that it converges to 
$\mathbf u \in L_\infty(0,T;\dot{\mathbf H}_{\#\sigma}^{0})\cap L_2(0,T;\dot{\mathbf H}_{\#\sigma}^{1})$ also strongly in $L_2(0,T;\dot{\mathbf H}_{\#\sigma}^{0})$.

Since $\mathbf u\in L_{2}(0,T;\dot{\mathbf H}_{\#\sigma}^{1})$ and $\mathbf u'\in L_{2}(0,T;\dot{\mathbf H}_{\#\sigma}^{-n/2})$,  Theorem \ref{LM-T3.1} implies that $\mathbf u$ is almost everywhere on $[0,T]$ equal to a function belonging to  $C^0([0,T];\dot{\mathbf H}_{\#\sigma}^{-(n-2)/4})$.
Further on, under $\mathbf u$ we will understand the redefined (on a zero-measure set in $[0,T]$) function belonging to  $C^0([0,T];\dot{\mathbf H}_{\#\sigma}^{-(n-2)/4})$, which also means 
that $
\| \mathbf u(\cdot,t)-{\mathbf u}(\cdot,0)\| _{\dot{\mathbf H}_{\#\sigma}^{-(n-2)/4}}\to 0
$
as ${t\to 0}$.
Since $\mathbf u\in L_{\infty}(0,T;\dot{\mathbf H}_{\#\sigma}^{0})$ as well, Lemma \ref{L1.4Tem}  implies that ${\mathbf u}$ is 
$\dot{\mathbf H}_{\#\sigma}^{0}$-weakly continuous 
in time on $[0,T]$ and hence
$
\lim_{t\to 0}\langle{\mathbf u}(\cdot,t),\mathbf v\rangle_\T
=\langle{\mathbf u}(\cdot,0),\mathbf v\rangle_\T\quad \forall\, \mathbf v\in \dot{\mathbf H}^0_{\#\sigma}.
$

\paragraph{(c)} Let us prove that the limit function $\mathbf u$ solves the initial-variational problem \eqref{NS-eq:mik51a}--\eqref{NS-eq:mik51in}.
First of all, equality \eqref{E4.25Pm} and inequality \eqref{E4.27pm} 
imply that
\begin{align*}%\label{E4.25Pmn}
\|\mathbf u'_m +P_m[({\mathbf u}_m\cdot \nabla ){\mathbf u}_m]\|^2_{L_2(0,T;{\mathbf H}_{\#\sigma}^{-1})}
&\le 2\|P_m\mathbf{f}\|^2_{L_2(0,T;{\mathbf H}_{\#}^{-1})} +2\|P_m\bs{\mathfrak L}\mathbf u_m\|^2_{L_2(0,T;{\mathbf H}_{\#}^{-1})}
\nonumber\\
&\le 2\|\mathbf{f}\|^2_{L_2(0,T;{\mathbf H}_{\#}^{-1})} 
+8\|\mathbb A\|^2 C_{\mathbb A}\left( \| {\mathbf u}^0\| ^2_{\mathbf L_{2\#}}
+4C_{\mathbb A}\|\mathbf f\|^2_{L_2(0,T;\dot{\mathbf H}_\#^{-1})}\right).
\end{align*}
Thus the sequence $P_m\mathbf D{\mathbf u}_m:=\mathbf u'_m +P_m[({\mathbf u}_m\cdot \nabla ){\mathbf u}_m]$ is bounded in $L_2(0,T;\dot{\mathbf H}_{\#\sigma}^{-1})$ and hence there exists a subsequence of  the sequence ${\mathbf u}_m$ such that the corresponding subsequence of the sequence $P_m\mathbf D{\mathbf u}_m$ weakly converges in this space to a distribution ${\mathbf U}\in L_2(0,T;\dot{\mathbf H}_{\#\sigma}^{-1})$.
Let us prove that ${\mathbf U}=\mathbf D{\mathbf u}:=\mathbf u' +\mathbb P_\sigma[({\mathbf u}\cdot \nabla ){\mathbf u}]$.
Indeed, for any $\phi\in L_2(0,T)$ and $\mathbf w\in\dot{\mathbf H}_{\#\sigma}^{n/2}$, evidently, 
$$
\mathbf v:=\mathbf w \phi\in L_2(0,T;\dot{\mathbf H}_{\#\sigma}^{n/2})=\left(L_2(0,T;\dot{\mathbf H}_{\#\sigma}^{-n/2})\right)^*
\subset L_2(0,T;\dot{\mathbf H}_{\#\sigma}^{1})=\left(L_2(0,T;\dot{\mathbf H}_{\#\sigma}^{-1})\right)^*,
$$
and we have

\begin{multline}\label{E4.40B}
\int_0^T \langle \mathbf {Du}-{\mathbf U}, \mathbf v\rangle_\T dt
=\int_0^T \langle 
\mathbf {Du}-P_m\mathbf D{\mathbf u}_m, \mathbf v\rangle_\T dt
+\int_0^T \langle P_m\mathbf D{\mathbf u}_m-{\mathbf U}, \mathbf v\rangle_\T dt
\\
=\int_0^T \langle \mathbf u'-\mathbf u'_m, \mathbf v\rangle_\T dt
+\int_0^T \langle \mathbb P_\sigma[({\mathbf u}\cdot \nabla ){\mathbf u}]-P_m[({\mathbf u}_m\cdot \nabla ){\mathbf u}_m], \mathbf v\rangle_\T dt
+\int_0^T \langle P_m\mathbf D{\mathbf u}_m -{\mathbf U}, \mathbf v\rangle_\T dt.
\end{multline}
The first and the last integrals in the right hand side of \eqref{E4.40B} tends to zero as $m\to \infty$ due to the weak convergence of $\mathbf u'_m$ to $\mathbf u'$  in $L_2(0,T;\dot{\mathbf H}_{\#\sigma}^{-n/2})$ and of $P_m\mathbf {Du}_m$ to ${\mathbf U}$ in $L_2(0,T;\dot{\mathbf H}_{\#\sigma}^{-1})$. 
For the middle integral in the right hand side of \eqref{E4.40B} we obtain, as in \cite[Section 6.4.4]{Lions1969}, for any  function $\mathbf w_k\in \dot{\mathbf C}_{\#\sigma}^{\infty}$ from our basis in $\dot{\mathbf H}_{\#\sigma}^{n/2}$, for $m\ge k$,
\begin{multline}\label{E4.43}
\int_0^T \langle P_m[({\mathbf u}_m\cdot \nabla ){\mathbf u}_m], \mathbf w_k\rangle_\T \phi(t)dt
=\int_0^T \langle ({\mathbf u}_m\cdot \nabla ){\mathbf u}_m, \mathbf w_k\rangle_\T \phi(t)dt
\\
=-\int_0^T \langle{\mathbf u}_m\cdot\nabla \mathbf w_k,{\mathbf u}_m\rangle_\T \phi(t)dt
=-\int_0^T \langle\mathbb P_\sigma[{\mathbf u}_m\cdot\nabla \mathbf w_k],{\mathbf u}_m\rangle_\T \phi(t)dt
\\
\to -\int_0^T \langle\mathbb P_\sigma[{\mathbf u}\cdot\nabla \mathbf w_k],{\mathbf u}\rangle_\T \phi(t)dt
=\int_0^T \langle\mathbb P_\sigma[({\mathbf u}\cdot \nabla ){\mathbf u}], \mathbf w_k\rangle_\T \phi(t)dt,
\quad m\to\infty
\end{multline}
by the strong convergence of $\{\mathbf u_m\}$ to $\mathbf u$ in in $L_2(0,T;\dot{\mathbf H}_{\#\sigma}^{0})$.
Since $\mathbb P_\sigma[({\mathbf u}\cdot \nabla ){\mathbf u}]$ belongs to $L_2(0,T;\dot{\mathbf H}_{\#\sigma}^{-n/2})$, 
$P_m[({\mathbf u}_m\cdot \nabla ){\mathbf u}_m]$ is uniformly bounded in this space
and $\{\mathbf w_k\}$ is a basis in $\dot{\mathbf H}_{\#\sigma}^{n/2}$, we conclude that the convergence in \eqref{E4.43} implies that 
$$
\int_0^T \langle P_m[({\mathbf u}_m\cdot \nabla ){\mathbf u}_m], \mathbf w\rangle_\T \phi(t) dt\to
\int_0^T \langle\mathbb P_\sigma[({\mathbf u}\cdot \nabla ){\mathbf u}], \mathbf w\rangle_\T \phi(t) dt,
\quad m\to\infty
$$
and thus
\begin{align*}
\int_0^T \langle \mathbf {Du}-{\mathbf U}, \mathbf w\rangle_\T \phi(t)dt=0\ \ \forall\, \phi\in L_2(0,T), \  
\ \forall\, \mathbf w\in\dot{\mathbf H}_{\#\sigma}^{n/2},
\end{align*}
implying that $\|\langle \mathbf {Du}-{\mathbf U}, \mathbf w\rangle_\T\|_{L_2(0,T)}=0$ for any 
$\mathbf w\in\dot{\mathbf H}_{\#\sigma}^{n/2}$ and thus $\langle \mathbf {Du}(\cdot,t)-{\mathbf U}(\cdot,t), \mathbf w\rangle_\T=0$ for a.e. $t\in(0,T)$. Choosing $\mathbf w=\Lambda_\#^n(\mathbf {Du}-{\mathbf U})$, we conclude that 
$\|\mathbf {Du}(\cdot,t)-{\mathbf U}(\cdot,t)\|_{\dot{\mathbf H}_{\#\sigma}^{-n/2}}=0$ for a.e. $t\in(0,T)$
and hence
$\|\mathbf {Du}-{\mathbf U}\|_{L_2(0,T;\dot{\mathbf H}_{\#\sigma}^{-n/2})}=0$, 
i.e., $\mathbf {Du}={\mathbf U}\in L_2(0,T;\dot{\mathbf H}_{\#\sigma}^{-1})$.

Now we continue reasoning as, e.g., in \cite[Chapter 1, Section 6.4.4]{Lions1969} to conclude that
the limit function $\mathbf u$ solves the initial-variational problem \eqref{NS-eq:mik51a}--\eqref{NS-eq:mik51in}.
Indeed, let us multiply equation \eqref{NS-eq:mik51ad} by an arbitrary $\phi\in L_2(0,T)$, integrate it in $t$ to obtain. 
\begin{align}
\label{NS-eq:mik51aC1}
\int_0^T\big[\left\langle {\mathbf u}'_m+\mathbb P_\sigma[({\mathbf u}_m\cdot \nabla ){\mathbf u}_m],{\mathbf w_k}\right\rangle _{\T}
+ a_{\T}({\mathbf u}_m,{\mathbf w_k})
- \langle\mathbf{f}, \mathbf w_k\rangle_{\T}\big]\phi(t)dt=0,\ \ 
\forall\, k\in \{1,2,\ldots\}.
\end{align}
To take the limit of \eqref{NS-eq:mik51aC1} as $m\to \infty$, we remark that the terms linearly depending on ${\mathbf u}_m$ tend to the corresponding terms for ${\mathbf u}$ due to the weak convergences discussed before.
 For the nonlinear term, by \eqref{eq:mik55} we have
\begin{multline*}
%\label{NS-eq:mik51aC2}
\int_0^T\left\langle\mathbb P_\sigma[({\mathbf u}_m\cdot \nabla ){\mathbf u}_m],{\mathbf w_k}\right\rangle _{\T}\phi(t)dt
=\int_0^T\left\langle({\mathbf u}_m\cdot \nabla ){\mathbf u}_m,{\mathbf w_k}\right\rangle _{\T}\phi(t)dt
=-\int_0^T\left\langle({\mathbf u}_m\cdot \nabla ){\mathbf w_k},{\mathbf u}_m\right\rangle _{\T}\phi(t)dt
\\
\to -\int_0^T\left\langle({\mathbf u}\cdot \nabla ){\mathbf w_k},{\mathbf u}\right\rangle _{\T}\phi(t)dt
=\int_0^T\left\langle({\mathbf u}\cdot \nabla ){\mathbf u},{\mathbf w_k}\right\rangle _{\T}\phi(t)dt
=\int_0^T\left\langle\mathbb P_\sigma[({\mathbf u}\cdot \nabla ){\mathbf u}],{\mathbf w_k}\right\rangle _{\T}\phi(t)dt,
\end{multline*}
where the limit is due to the strong convergence of ${\mathbf u}_m$ to ${\mathbf u}$ in $L_2(0,T;\dot{\mathbf H}_{\#\sigma}^{0})$ and the smoothness of ${\mathbf w_k}$.
Thus, we obtain
\begin{align}
\label{NS-eq:mik51aC}
\int_0^T\big[\left\langle {\mathbf u}'+\mathbb P_\sigma[({\mathbf u}\cdot \nabla ){\mathbf u}],{\mathbf w_k}\right\rangle _{\T}
+ a_{\T}({\mathbf u},{\mathbf w_k})
- \langle\mathbf{f}, \mathbf w_k\rangle_{\T}\big]\phi(t)dt=0,\ \ 
\forall\, k\in \{1,2,\ldots\}.
\end{align}
Since  $\mathbf {Du}={\mathbf u}'+\mathbb P_\sigma[({\mathbf u}\cdot \nabla ){\mathbf u}]\in L_2(0,T;\dot{\mathbf H}_{\#\sigma}^{-1})$ 
and $\{\mathbf w_k\}$ is a basis in $\dot{\mathbf H}_{\#\sigma}^{1}$, equation \eqref{NS-eq:mik51aC} implies that 
\begin{align}
\label{NS-eq:mik51aD}
\int_0^T\big[\left\langle {\mathbf u}'+\mathbb P_\sigma[({\mathbf u}\cdot \nabla ){\mathbf u}],{\mathbf w}\right\rangle _{\T}
+ a_{\T}({\mathbf u},{\mathbf w})
- \langle\mathbf{f}, \mathbf w\rangle_{\T}\big]\phi(t)dt=0,\ \ \forall\, \phi\in L_2(0,T), \  \ 
\forall\,\mathbf w\in \dot{\mathbf H}_{\#\sigma}^1.
\end{align}
Equation \eqref{NS-eq:mik51aD} means that
$$
\left\|\left\langle {\mathbf u}'+\mathbb P_\sigma[({\mathbf u}\cdot \nabla ){\mathbf u}],{\mathbf w}\right\rangle _{\T }
+a_{\T}({\mathbf u},{\mathbf w})
-\langle\mathbf{f}, \mathbf w\rangle_{\T}\right\|_{L_2(0,T)}=0\  \ 
\forall\,\mathbf w\in \dot{\mathbf H}_{\#\sigma}^1,
$$
which implies \eqref{NS-eq:mik51a}.

To prove \eqref{NS-eq:mik51in}, let us employ in \eqref{NS-eq:mik51aC1} an arbitrary $\phi\in \mathcal C^\infty[0,T]$ such that $\phi(T)=0$, integrate the first term by parts with account of \eqref{NS-eq:mik51ind} and take the limit as $m\to \infty$ to obtain
\begin{multline}
\label{NS-eq:mik51aE}
\int_0^T \big\{\left\langle {-\mathbf u}(\cdot,t),{\mathbf w_k}\right\rangle _{\T}\phi'(t) 
+\left\langle\mathbb P_\sigma[({\mathbf u}(\cdot,t)\cdot \nabla ){\mathbf u}(\cdot,t)]\phi(t),{\mathbf w_k}\right\rangle _{\T}
+a_{\T}({\mathbf u}(\cdot,t),{\mathbf w_k})\phi(t)
\\
- \langle\mathbf{f}(\cdot,t), \mathbf w_k\rangle_{\T}\phi(t)\big\}dt= \langle\mathbf u^0, \mathbf w_k\rangle_{\T}\phi(0),\  \ 
\forall\, k\in \{1,2,\ldots\}.
\end{multline}
Replacing in \eqref{NS-eq:mik51aE} $\mathbf u$ by its redefined version that is $\dot{\mathbf H}_{\#\sigma}^{0}$-weakly continuous in time (cf. the last paragraph of the step {\bf(b)})  and integrating by parts the first term in \eqref{NS-eq:mik51aE}, we get 
\begin{multline*}
%\label{NS-eq:mik51aF}
\int_0^T\big\{\left\langle {\mathbf u}'(\cdot,t)+\mathbb P_\sigma[({\mathbf u}(\cdot,t)\cdot \nabla ){\mathbf u}(\cdot,t)],{\mathbf w_k}\right\rangle _{\T}
+ a_{\T}({\mathbf u}(\cdot,t),{\mathbf w_k})
\\
- \langle\mathbf{f}(\cdot,t), \mathbf w_k\rangle_{\T}\big\}\phi(t)dt
=\langle\mathbf u^0, \mathbf w_k\rangle_{\T}\phi(0)-\langle\mathbf u(\cdot,0), \mathbf w_k\rangle_{\T}\phi(0),\ \ 
\forall\, k\in \{1,2,\ldots\}.
\end{multline*}
Comparing with \eqref{NS-eq:mik51aD} and taking into account that $\phi(0)$ is arbitrary, we obtain that 
$\langle\mathbf u^0-\langle\mathbf u(\cdot,0), \mathbf w_k\rangle_{\T}$, and because $ \mathbf w_k$ is a basis in $\dot{\mathbf H}_{\#\sigma}^{0}$, we conclude that $\mathbf u^0=\mathbf u(\cdot,0)$ thus proving the initial condition  \eqref{NS-eq:mik51in}.

The existence and uniqueness of the associated pressure $p\in L_2(0,T;\dot{H}_{\#}^{-n/2+1})$ follows from Lemma \ref{L4.2}(iii).

\paragraph{(d)} Let us prove the (strong) energy inequality (cf., \cite[Chapter 3, Remark 4(ii)]{Temam2001}, \cite[Theorem 4.6]{RRS2016} and references therein, for the isotropic constant-coefficient case). 
Here we will generalise the proof of \cite[Theorem 4.6]{RRS2016}.
To this end, let us consider again the subsequence $\{\mathbf u_m\}$ from the previous step, which still satisfies equation \eqref{NS-eq:mik51af=}.
Let $0\le t_0<t\le T$.
Multiplying  \eqref{NS-eq:mik51af=} by 2 and integrating it time, we get
\begin{align}
%\label{E4.18}
\| {\mathbf u}_m(\cdot,t)\| _{\mathbf L_{2\#}}^2
+2\int_{t_0}^{t}a_{\T}(\tau;{\mathbf u}_m(\cdot,\tau),{\mathbf u}_m(\cdot,\tau)) d\tau
\label{NS-eq:mik51ai=}
=  \| {\mathbf u}_m(\cdot,t_0)\| ^2_{\mathbf L_{2\#}}
+2\int_{t_0}^{t}\langle\mathbf{f}(\cdot,\tau), {\mathbf u}_m(\cdot,\tau)\rangle_{\T}d\tau.
\end{align}
We would like to take limits of each term in \eqref{NS-eq:mik51ai=} as $m\to\infty$.
First of all, since ${\mathbf u}_m$ converges to ${\mathbf u}$ strongly in $L_2(0,T;\dot{\mathbf H}_{\#\sigma}^{0})$, we obtain that 
\begin{align}\label{E4.37a}
\| {\mathbf u}_m(\cdot,\tau)\| _{\mathbf L_{2\#}}^2\to\| {\mathbf u}(\cdot,\tau)\| _{\mathbf L_{2\#}}^2, \text{ for a.e. }\tau\in[0,T].
\end{align}
Further, since ${\mathbf u}_m$ converges to ${\mathbf u}$ weakly in $L_2(0,T;\dot{\mathbf H}^1_{\#\sigma})$ and 
$\mathbf f\in{L_2(0,T;\dot{\mathbf H}_{\#}^{-1})}\subset(L_2(0,T;\dot{\mathbf H}^1_{\#\sigma}))^*$, we have
\begin{align}\label{E4.38a}
\int_{t_0}^{t}\langle\mathbf{f}(\cdot,\tau), {\mathbf u}_m(\cdot,\tau)\rangle_{\T}d\tau
\to \int_{t_0}^{t}\langle\mathbf{f}(\cdot,\tau), {\mathbf u}(\cdot,\tau)\rangle_{\T}d\tau, \quad\forall\,[t_0,t]\subset[0,T]
\end{align}
Finally,  ${\mathbf u}_m$ converges to ${\mathbf u}$ weakly in $L_2(0,T;\dot{\mathbf H}^1_{\#\sigma})$
and 
\begin{align*}%\label{eqnorm1}
|\!|\!|\mathbf w|\!|\!|_{L_2(t_0,t;\dot{\mathbf H}_{\#}^1)}:=\left(\int_{t_0}^{t}a_{\T}(\tau;{\bf w}(\cdot,\tau),{\mathbf w}(\cdot,\tau))d\tau\right)^{1/2}
\end{align*} 
is an equivalent norm  
in $L_2(t_0,t;\dot{\mathbf H}_{\#\sigma}^{1})$, see \eqref{eqnorm}.
Since $\mathbf u$ is a weak limit of $\mathbf u_m$ in ${L_2(t_0,t;\dot{\mathbf H}_{\#}^1)}$,  we have
(see, e.g., the Remark in Section 4.43 of \cite{Lusternik-Sobolev1975}) that
$$
|\!|\!|\mathbf u|\!|\!|^2_{L_2(t_0,t;\dot{\mathbf H}_{\#}^1)}
\le\underset{m\to\infty}{\lim\inf}\, |\!|\!|\mathbf u_m|\!|\!|^2_{L_2(t_0,t;\dot{\mathbf H}_{\#}^1)}.
$$
Hence
\begin{align}\label{eqnorm2}
\int_{t_0}^{t}a_{\T}(\tau;{\mathbf u}(\cdot,\tau),{\mathbf u}(\cdot,\tau))d\tau
\le
\underset{m\to\infty}{\lim\inf}\int_{t_0}^{t}a_{\T}(\tau;{\mathbf u}_m(\cdot,\tau),{\mathbf u}_m(\cdot,\tau))d\tau, \quad\forall\,[t_0,t]\subset[0,T].
\end{align} 
Taking $\underset{m\to\infty}{\lim\inf}$ from both sides of \eqref{NS-eq:mik51ai=}, due to \eqref{E4.37a}, \eqref{E4.38a} and \eqref{eqnorm2}, we obtain \eqref{NS-eq:mik51ai=a}
 for a.e. $[t_0,t]\subset[0,T]$.

Similar to the reasoning in the proof of Theorem 4.6 in \cite{RRS2016}, let us now prove that the (strong) energy inequality \eqref{NS-eq:mik51ai=a} holds also {\em for any} $[t_0,t]\subset[0,T]$.
Let us take some $t_0$ for which \eqref{NS-eq:mik51ai=a} holds for a.e. $t'>t_0$. 
Let us now choose {\em any} $t\in(t_0,T]$. 
Then there exists a sequence $t'_{i}\to t$ such that
\begin{align*}
\| {\mathbf u}(\cdot,t'_{i})\| _{\mathbf L_{2\#}}^2
+2\int_{t_0}^{t'_{i}}a_{\T}(\tau;{\mathbf u}(\cdot,\tau),{\mathbf u}(\cdot,\tau)) d\tau
%\label{NS-eq:mik51ai=a}
\le  \| {\mathbf u}(\cdot,t_0)\| ^2_{\mathbf L_{2\#}}
+2\int_{t_0}^{t'_{i}}\langle\mathbf{f}(\cdot,\tau), {\mathbf u}(\cdot,\tau)\rangle_{\T}d\tau.
\end{align*}
Since ${\mathbf u}\in L_2(0,T;\dot{\mathbf H}^1_{\#\sigma})$, we have
$$
\int_{t_0}^{t'_{i}}a_{\T}(\tau;{\mathbf u}(\cdot,\tau),{\mathbf u}(\cdot,\tau)) d\tau
\to\int_{t_0}^{t}a_{\T}(\tau;{\mathbf u}(\cdot,\tau),{\mathbf u}(\cdot,\tau)) d\tau,\quad
\int_{t_0}^{t'_{i}}\langle\mathbf{f}(\cdot,\tau), {\mathbf u}(\cdot,\tau)\rangle_{\T}d\tau
\to \int_{t_0}^{t}\langle\mathbf{f}(\cdot,\tau), {\mathbf u}(\cdot,\tau)\rangle_{\T}d\tau.
$$
On the other hand, the ${\mathbf L_2}_{\#}$-weak continuity of $\mathbf u$ implies that 
$$
\| {\mathbf u}(\cdot,t)\| _{\mathbf L_{2\#}}^2\le\underset{t'_i\to t}{\lim\inf}\| {\mathbf u}(\cdot,t'_{i})\| _{\mathbf L_{2\#}}^2.
$$
Thus
\begin{multline*}
\| {\mathbf u}(\cdot,t)\| _{\mathbf L_{2\#}}^2
+2\int_{t_0}^{t}a_{\T}(\tau;{\mathbf u}(\cdot,\tau),{\mathbf u}(\cdot,\tau)) d\tau
\le\underset{t'_i\to t}{\lim\inf}\left(\| {\mathbf u}(\cdot,t'_{i})\| _{\mathbf L_{2\#}}^2
+2\int_{t_0}^{t'_{i}}a_{\T}(\tau;{\mathbf u}(\cdot,\tau),{\mathbf u}(\cdot,\tau)) d\tau\right)
\\
%\label{NS-eq:mik51ai=a}
\le \underset{t'_i\to t}{\lim\inf}\left( \| {\mathbf u}(\cdot,t_0)\| ^2_{\mathbf L_{2\#}}
+2\int_{t_0}^{t'_{i}}\langle\mathbf{f}(\cdot,\tau), {\mathbf u}(\cdot,\tau)\rangle_{\T}d\tau\right)
= \| {\mathbf u}(\cdot,t_0)\| ^2_{\mathbf L_{2\#}}
+2\int_{t_0}^{t}\langle\mathbf{f}(\cdot,\tau), {\mathbf u}(\cdot,\tau)\rangle_{\T}d\tau
\end{multline*}
By a similar argument, we can take any $t_0$.
\end{proof}

%\appendix \section{APPENDICES}
\section{Auxiliary results}

\subsection{Advection term properties}\label{S7.2} 
The  divergence theorem and periodicity
imply the following identity for any ${\mathbf v}_1,{\mathbf v}_2,{\mathbf v}_3\in \mathbf C^\infty_\#$.
\begin{align}
\label{eq:mik54}
\left\langle({\mathbf v}_1\cdot \nabla ){\mathbf v}_2,{\mathbf v}_3\right\rangle _{\T}
&=\int_{\T}\nabla\cdot\left({\mathbf v}_1({\mathbf v}_2\cdot {\mathbf v}_3)\right)d{\mathbf x}
-\left\langle(\nabla \cdot{\mathbf v}_1){\mathbf v}_3
+({\mathbf v}_1\cdot \nabla ){\mathbf v}_3,{\mathbf v}_2\right\rangle _{\T}\nonumber
\\
&=
-\left\langle({\mathbf v}_1\cdot \nabla ){\mathbf v}_3,{\mathbf v}_2\right\rangle _{\T}
-\left\langle(\nabla \cdot{\mathbf v}_1){\mathbf v}_3,{\mathbf v}_2\right\rangle _{\T}
 \end{align}
Hence for any ${\mathbf v}_1,{\mathbf v}_2\in \mathbf C^\infty_\#$,
\begin{equation}
\label{eq:mik55a}
\left\langle ({\mathbf v}_1\cdot \nabla ){\mathbf v}_2,{\mathbf v}_2\right\rangle _{\T}
=-\frac12\left\langle(\nabla \cdot{\mathbf v}_1){\mathbf v}_2,{\mathbf v}_2\right\rangle _{\T}
=-\frac12\left\langle \div\,{\mathbf v}_1,|{\mathbf v}_2|^2\right\rangle _{\T}.
\end{equation}
In view of \eqref{eq:mik54} we obtain the identity
\begin{align}\label{E-B.20}
\left\langle({\mathbf v}_1\cdot \nabla ){\mathbf v}_2,{\mathbf v}_3\right\rangle _{\T}
&\!\!=\!-\left\langle({\mathbf v}_1\cdot \nabla ){\mathbf v}_3,{\mathbf v}_2\right\rangle _{\T}\quad
 \forall\ {\mathbf v}_1\in \mathbf C^\infty_{\#\sigma},\
{\mathbf v}_2,\, {\mathbf v}_3\in \mathbf C^\infty_\#\,,
\end{align}
and hence the following well known formula for any  ${\mathbf v}_1\in \mathbf C^\infty_{\#\sigma}$,
${\mathbf v}_2\in \mathbf C^\infty_\#$,
\begin{equation}
\label{eq:mik55}
\left\langle ({\mathbf v}_1\cdot \nabla ){\mathbf v}_2,{\mathbf v}_2\right\rangle _{\T}=0.
\end{equation}
Equation \eqref{eq:mik55} evidently holds also  for ${\mathbf v}_1$ and ${\mathbf v}_2$ from  the more general spaces, for which the left hand side in \eqref{eq:mik55}  is bounded and to which  $\mathbf C^\infty_{\#\sigma}$ and $\mathbf C^\infty_\#$, respectively, are densely embedded.

\subsection{Some point-wise multiplication results}\label{S-RS}  
Let us  accommodated to the periodic function spaces in $\R^n$, $n\ge 1$, a particular case of a much more general Theorem 1 in Section 4.6.1 of \cite{Runst-Sickel1996} about point-wise products of functions/distributions.
\begin{theorem}\label{RS-T1-S4.6.1}
Assume $n\ge 1$, $ s_1\le s_2$ and $s_1+s_2>0$.
Then there exists a constant $C_*(s_1,s_2,n)>0$ such that for any $f_1\in H^{s_1}_\#$ and $f_2\in H^{s_1}_\#$, 

(a)  \quad 
$
f_1\cdot f_2\in H^{s_1}_\# \text{ and } \|f_1\cdot f_2\|_{H^{s_1}_\#}\le  C_*(s_1,s_2,n) \|f_1\|_{H^{s_1}_\#}\, \|f_2\|_{H^{s_2}_\#}
\quad\text{if }s_2>n/2;
$

(b) \quad
$
f_1\cdot f_2\in H^{s_1+s_2-n/2}_\# \text{ and } \|f_1\cdot f_2\|_{H^{s_1+s_2-n/2}_\#}\le  C_* (s_1,s_2,n)\|f_1\|_{H^{s_1}_\#}\, \|f_2\|_{H^{s_2}_\#}
 \quad \text{if }s_2<n/2.
$
\end{theorem}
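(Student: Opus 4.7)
The plan is to establish both parts by reducing to weighted $\ell^2$ bounds on the discrete Fourier convolution
\[
\widehat{f_1 f_2}(\bs\xi)=\sum_{\bs\eta\in\Z^n}\hat{f_1}(\bs\eta)\,\hat{f_2}(\bs\xi-\bs\eta),
\]
which is legitimate for $f_j\in\mathcal{C}^\infty_\#$ and then extended by the density of $\mathcal{C}^\infty_\#$ in $H^{s_j}_\#$. The analytic ingredients are Peetre's inequality $\varrho(\bs\xi)^{s}\le 2^{|s|}\varrho(\bs\eta)^{|s|}\varrho(\bs\xi-\bs\eta)^{s}$, Young's convolution inequality on $\Z^n$, and the summability criterion $\|\varrho^{-t}\|_{\ell^2(\Z^n)}<\infty$ iff $t>n/2$.

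For part (a), I would first prove that $H^{s_2}_\#$ is a Banach algebra. Cauchy--Schwarz together with $s_2>n/2$ yields $\|\hat{f_j}\|_{\ell^1(\Z^n)}\le\|\varrho^{-s_2}\|_{\ell^2}\|f_j\|_{H^{s_2}_\#}$, and the subadditive form $\varrho(\bs\xi)^{s_2}\lesssim\varrho(\bs\eta)^{s_2}+\varrho(\bs\xi-\bs\eta)^{s_2}$ of Peetre (valid since $s_2\ge 0$) decomposes the convolution into two pieces, each controlled by Young's $\ell^2*\ell^1\hookrightarrow\ell^2$ through the product $\|f_1\|_{H^{s_2}_\#}\|f_2\|_{H^{s_2}_\#}$. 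Dualising via the identity $\langle f_1 f_2,g\rangle_\T=\langle f_1,f_2\,g\rangle_\T$, valid on smooth functions and extended by density, upgrades this to the endpoint bound for multiplication by $f_2\in H^{s_2}_\#$ on $H^{-s_2}_\#$. Complex interpolation of the Bessel-potential scale then delivers the estimate on every $H^s_\#$ with $s\in[-s_2,s_2]$; the hypothesis $s_1+s_2>0$ together with $s_1\le s_2$ places $s_1$ inside $(-s_2,s_2]$, so the claim of (a) follows.

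For part (b), set $s_*:=s_1+s_2-n/2$. When $s_1\ge 0$, the Sobolev embeddings $H^{s_j}_\#\hookrightarrow L^{p_j}_\#$ with $1/p_j=1/2-s_j/n$ (applicable because $0\le s_j<n/2$), H\"older's inequality, and the dual embedding $L^q_\#\hookrightarrow H^{s_*}_\#$ with $1/q=1/p_1+1/p_2$ chain together to yield $\|f_1 f_2\|_{H^{s_*}_\#}\lesssim\|f_1\|_{H^{s_1}_\#}\|f_2\|_{H^{s_2}_\#}$. When $s_1<0$, I would reduce by duality: for any test $g\in H^{-s_*}_\#$ the identity $\langle f_1 f_2,g\rangle_\T=\langle f_1,f_2\,g\rangle_\T$ shifts the task to bounding $f_2 g$ in $H^{-s_1}_\#$. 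The pair of indices $(s_2,-s_*)$ is strictly positive (since $s_1<0$ forces $s_2>0$ via $s_1+s_2>0$, and $-s_*=n/2-s_1-s_2>n/2-s_2>0$), both indices are strictly below $n/2$ (for $-s_*$ this is again $s_1+s_2>0$), and their sum equals $n/2-s_1>0$; hence the nonnegative-index sub-case of (b) just proved gives $f_2 g\in H^{(s_2)+(-s_*)-n/2}_\#=H^{-s_1}_\#$ with the required norm bound.

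The step I expect to be most delicate is the low-regularity corner of part (b) in which $s_1$ is negative and $-s_*$ is only slightly positive: there neither Fourier sequence is in $\ell^1$, the product is defined solely through the dual pairing, and the constant $C_*(s_1,s_2,n)$ must be threaded through the reduction without creating a circular dependency on itself. A uniform alternative, which I would deploy if the dichotomy above becomes awkward, is a Bony-type paraproduct decomposition $f_1 f_2=T_{f_1}f_2+T_{f_2}f_1+R(f_1,f_2)$ built from periodic Littlewood--Paley projectors: the two paraproducts $T$ are handled by frequency-localised Young estimates in $\ell^2$, while the convergence of the resonant remainder $R$ in $H^{s_*}_\#$ is precisely what the strict condition $s_1+s_2>0$ secures.
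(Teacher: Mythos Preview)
Your approach is entirely different from the paper's: the paper does not prove this statement directly at all, but simply invokes Theorem~1 in Section~4.6.1 of Runst--Sickel (for the standard Bessel-potential spaces on $\R^n$) and transfers it to the torus via the equivalence of periodic and nonperiodic Sobolev norms on the cube. What you propose is an ab initio proof on $\Z^n$, which is more informative but also more exposed.

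Part~(a) is fine: the algebra property for $s_2>n/2$, the duality step to $H^{-s_2}_\#$, and complex interpolation across $[-s_2,s_2]$ are all standard and correctly assembled, and the hypothesis $s_1+s_2>0$ is exactly what places $s_1$ in the open interpolation interval.

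Part~(b), however, has a genuine gap, and not where you flagged it. In the sub-case $s_1\ge 0$ you invoke a ``dual embedding $L^q_\#\hookrightarrow H^{s_*}_\#$'' with $1/q=1-(s_1+s_2)/n$ and $s_*=s_1+s_2-n/2$. This embedding is only valid when $s_*\le 0$; once $s_1+s_2>n/2$ (which is perfectly compatible with $s_1\le s_2<n/2$, e.g.\ $s_1=s_2=n/3$) you have $q>2$ and $s_*>0$, and there is no inclusion of a Lebesgue space into a positive-order Sobolev space. So the H\"older--Sobolev chain stops at $L^q_\#\subset L^2_\#=H^0_\#$ and does not reach $H^{s_*}_\#$. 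Worse, your duality reduction for $s_1<0$ lands precisely in this regime: the auxiliary pair $(s_2,-s_*)$ has both entries positive with sum $n/2-s_1>n/2$, so the target index $-s_1$ is strictly positive and you are calling the broken sub-case. The paraproduct alternative you mention at the end would repair this (the resonant piece $R(f_1,f_2)$ is exactly what supplies the positive smoothness via the almost-orthogonality gain when $s_1+s_2>0$), but it is not an optional refinement here---it, or an equivalent device such as a Kato--Ponce/fractional Leibniz estimate, is required to close the argument whenever $s_*>0$.
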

\begin{proof}
Items (a) and (b) follow, respectively, from items (i) and (iii) of \cite[Theorem 1 in Section 4.6.1]{Runst-Sickel1996} when we take into account the norm equivalence in the standard and periodic Sobolev spaces.
\end{proof}

\subsection{Spectrum of the periodic Bessel potential operator}\label{BPOS}
In this section we assume that vector-functions/distributions $\mathbf u$ are generally complex-valued and the Sobolev spaces $\dot{\mathbf H}_{\#\sigma}^{s}$ are complex.
Let us recall the definition 
\begin{align}\label{E3.14sigma}
 \left(\Lambda_\#^{r}\,{\mathbf u}\right)(\mathbf x)
:=\sum_{\bs\xi\in\dot\Z^n}\varrho(\bs\xi)^{r}\widehat {\mathbf u}(\bs\xi)e^{2\pi i \mathbf x\cdot\bs\xi}\quad 
\forall\, \mathbf u\in  \dot{\mathbf H}_{\#\sigma}^{s},\ s\,r in\R.
\end{align}
of  the continuous periodic Bessel potential operator 
$\Lambda_\#^r: \dot{\mathbf H}_{\#\sigma}^s\to \dot{\mathbf H}_{\#\sigma}^{s-r}$, $r\in\R$,
see \eqref{E3.14}, \eqref{E3.15}, \eqref{3.23}.
\begin{theorem}\label{TE.1r}
Let $r\in\R$, $r\ne 0$.

(i) Then the operator $\Lambda^r_\#$ in  $\dot{\mathbf H}_{\#\sigma}^{0}$  possesses  a (non-strictly) monotone sequence of real eigenvalues $\lambda^{(r)}_j$ and a real orthonormal sequence of associated eigenfunctions $\mathbf w_j$ such that
\begin{align}
\label{S.2r}
&\Lambda^r_\# \mathbf w_j=\lambda^{(r)}_j \mathbf w_j,\  j \geq 1,\ 
\lambda^{(r)}_j>0,\\  
&\lambda^{(r)}_j \rightarrow+\infty,\  j \rightarrow+\infty \text{ if }r>0;
\quad \lambda^{(r)}_j \rightarrow 0,\  j \rightarrow+\infty \text{ if }r<0;
\\
&\mathbf{w}_j \in \dot{\mathbf C}_{\#\sigma}^{\infty}, \quad
(\mathbf w_j, \mathbf w_k)_{\dot{\mathbf H}_{\#\sigma}^{0}}
=\delta_{j k} \quad
\forall\, j, k >0.
\end{align}
(ii) Moreover, the sequence $\{\mathbf{w}_j\}$ is an orthonormal basis in  $\dot{\mathbf H}_{\#\sigma}^{0}$, that is 
\begin{align}\label{S.9ru2} 
\mathbf u=\sum_{i=1}^\infty \langle \mathbf u, \mathbf w_j\rangle_\T \mathbf w_j
\end{align}
where the series converges in $\dot{\mathbf H}_{\#\sigma}^{0}$ for any $\mathbf u\in \dot{\mathbf H}_{\#\sigma}^{0}$.

(iii) In addition, the sequence $\{\mathbf{w}_j\}$ is also an orthogonal basis in $\dot{\mathbf H}_{\#\sigma}^{r}$ with
$$
(\mathbf w_j, \mathbf w_k)_{\dot{\mathbf H}_{\#\sigma}^{r}}
=\lambda_j^{(r)}\lambda_k^{(r)}\delta_{jk}\quad 
\forall\, j, k >0.
$$
and  for any $\mathbf u\in \dot{\mathbf H}_{\#\sigma}^{r}$ series \eqref{S.9ru2}  converges also in $\dot{\mathbf H}_{\#\sigma}^{r}$, that is, the sequence of partial sums 
\begin{align}\label{m-Projector}
P_m\mathbf u:=\sum_{j=1}^m \langle\mathbf u,\mathbf w_j\rangle_\T \mathbf w_j
\end{align}
converges to $\mathbf u$ in $\dot{\mathbf H}_{\#\sigma}^{r}$ as $m\to \infty$.
The operator $P_m$ defined by \eqref{m-Projector} is for any $r\in\R$ the orthogonal projector operator from ${\mathbf H}^{r}_{\#}$ to ${\rm Span}\{\mathbf w_1,\ldots,\mathbf w_m\}$. 
\end{theorem}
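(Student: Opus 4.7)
The strategy is to diagonalize $\Lambda_\#^r$ explicitly using Fourier harmonics together with the divergence-free condition, and then recombine conjugate modes to obtain real eigenfunctions. Fix $\bs\xi\in\dot\Z^n$. The $n$-dimensional complex Fourier subspace $V_{\bs\xi}:=\{\mathbf a\,e^{2\pi i\mathbf x\cdot\bs\xi}:\mathbf a\in\C^n\}$ is invariant under $\Lambda_\#^r$ with eigenvalue $\varrho(\bs\xi)^r$ (by the very definition of $\Lambda_\#^r$ via Fourier multipliers). The divergence-free condition reads $\bs\xi\cdot\mathbf a=0$ in Fourier, so $V_{\bs\xi}\cap\dot{\mathbf H}_{\#\sigma}^0$ is $(n-1)$-dimensional. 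Let $\Xi^+\subset\dot\Z^n$ be a set such that $\dot\Z^n=\Xi^+\sqcup(-\Xi^+)$ (for example, lexicographically positive $\bs\xi$), and for each $\bs\xi\in\Xi^+$ fix a real orthonormal basis $\mathbf a_{\bs\xi,1},\dots,\mathbf a_{\bs\xi,n-1}$ of $\{\mathbf a\in\R^n:\mathbf a\cdot\bs\xi=0\}$. Define
\begin{equation*}
\mathbf w_{\bs\xi,k,+}(\mathbf x):=\sqrt 2\,\mathbf a_{\bs\xi,k}\cos(2\pi\mathbf x\cdot\bs\xi),\qquad
\mathbf w_{\bs\xi,k,-}(\mathbf x):=\sqrt 2\,\mathbf a_{\bs\xi,k}\sin(2\pi\mathbf x\cdot\bs\xi),
\end{equation*}
for $\bs\xi\in\Xi^+$, $k=1,\dots,n-1$. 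Because $\varrho(-\bs\xi)=\varrho(\bs\xi)$, each of these is a real-valued eigenfunction of $\Lambda_\#^r$ with eigenvalue $\varrho(\bs\xi)^r>0$, belongs to $\dot{\mathbf C}^\infty_{\#\sigma}$, and a direct integration on $\T$ shows $(\mathbf w_{\bs\xi,k,\pm},\mathbf w_{\bs\eta,l,\pm'})_{\dot{\mathbf H}_{\#\sigma}^0}=\delta_{\bs\xi\bs\eta}\delta_{kl}\delta_{\pm,\pm'}$.

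Next I enumerate the above family as a single sequence $\{\mathbf w_j\}_{j\ge 1}$ with the eigenvalues $\lambda_j^{(r)}$ rearranged monotonically: nondecreasing if $r>0$, nonincreasing if $r<0$. This rearrangement is possible because for every $M>0$ the set $\{\bs\xi\in\dot\Z^n:\varrho(\bs\xi)\le M\}$ is finite, so each eigenvalue has finite multiplicity $2(n-1)\cdot\#\{\bs\xi\in\Xi^+:\varrho(\bs\xi)=\text{const}\}$; the asymptotics $\lambda_j^{(r)}\to+\infty$ (for $r>0$) and $\lambda_j^{(r)}\to 0$ (for $r<0$) follow from $\varrho(\bs\xi)\to\infty$ as $|\bs\xi|\to\infty$. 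This establishes (i).

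For (ii), fix $\mathbf u\in\dot{\mathbf H}_{\#\sigma}^0$. By \eqref{eq:mik11} its Fourier series $\sum_{\bs\xi\in\dot\Z^n}\widehat{\mathbf u}(\bs\xi)e^{2\pi i\mathbf x\cdot\bs\xi}$ converges in $L_{2\#}$-norm. Reality of $\mathbf u$ gives $\widehat{\mathbf u}(-\bs\xi)=\overline{\widehat{\mathbf u}(\bs\xi)}$, and the divergence-free condition $\bs\xi\cdot\widehat{\mathbf u}(\bs\xi)=0$ implies $\widehat{\mathbf u}(\bs\xi)=\sum_{k=1}^{n-1}c_{\bs\xi,k}\mathbf a_{\bs\xi,k}$ with $c_{\bs\xi,k}\in\C$. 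Pairing the $\bs\xi$ and $-\bs\xi$ contributions via the cosine/sine identity
\begin{equation*}
\widehat{\mathbf u}(\bs\xi)e^{2\pi i\mathbf x\cdot\bs\xi}+\overline{\widehat{\mathbf u}(\bs\xi)}e^{-2\pi i\mathbf x\cdot\bs\xi}
= 2\sum_{k=1}^{n-1}\bigl(\Re c_{\bs\xi,k}\cos(2\pi\mathbf x\cdot\bs\xi)-\Im c_{\bs\xi,k}\sin(2\pi\mathbf x\cdot\bs\xi)\bigr)\mathbf a_{\bs\xi,k}
\end{equation*}
rewrites the series as $\sum_j\langle\mathbf u,\mathbf w_j\rangle_\T\mathbf w_j$ converging in $\dot{\mathbf H}_{\#\sigma}^0$, where the identification of coefficients with $\langle\mathbf u,\mathbf w_j\rangle_\T$ is a short computation using orthogonality of sines and cosines (and agrees with $(\mathbf u,\mathbf w_j)_{\dot{\mathbf H}_{\#\sigma}^0}$ for real $\mathbf u$). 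This gives \eqref{S.9ru2} together with Parseval's identity.

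Part (iii) is essentially a change of scale. Since $\mathbf w_j$ and $\mathbf w_k$ are Fourier-supported on $\pm\bs\xi_j$ and $\pm\bs\xi_k$ respectively, the definition \eqref{E3.3i} yields
\begin{equation*}
(\mathbf w_j,\mathbf w_k)_{\dot{\mathbf H}_{\#\sigma}^r}
=\varrho(\bs\xi_j)^{r}\varrho(\bs\xi_k)^{r}(\mathbf w_j,\mathbf w_k)_{\dot{\mathbf H}_{\#\sigma}^0}
=\lambda_j^{(r)}\lambda_k^{(r)}\delta_{jk},
\end{equation*}
so $\{\mathbf w_j\}$ is an orthogonal (not orthonormal) system in $\dot{\mathbf H}_{\#\sigma}^r$. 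To prove it is a basis, I apply the isometry $\Lambda_\#^r:\dot{\mathbf H}_{\#\sigma}^r\to\dot{\mathbf H}_{\#\sigma}^0$: for $\mathbf u\in\dot{\mathbf H}_{\#\sigma}^r$, expand $\Lambda_\#^r\mathbf u$ in $\dot{\mathbf H}_{\#\sigma}^0$ via (ii), then apply $\Lambda_\#^{-r}$ to recover $\mathbf u=\sum_j\langle\mathbf u,\mathbf w_j\rangle_\T\mathbf w_j$ with convergence in $\dot{\mathbf H}_{\#\sigma}^r$ (the dual pairing $\langle\mathbf u,\mathbf w_j\rangle_\T$ is well-defined for any $r\in\R$ because $\mathbf w_j\in\dot{\mathbf C}^\infty_{\#\sigma}$). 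Since partial sums $P_m\mathbf u$ truncate this series, Parseval for the scaled inner product shows $P_m$ is the orthogonal projector onto $\mathrm{Span}\{\mathbf w_1,\dots,\mathbf w_m\}$ in each $\dot{\mathbf H}_{\#\sigma}^r$.

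The main technical obstacle is the careful real-valued bookkeeping in paragraph 1 -- choosing $\Xi^+$ and the transverse frames $\{\mathbf a_{\bs\xi,k}\}$ so that the enumerated sequence consists of real, divergence-free, smooth, orthonormal eigenfunctions with the right multiplicity; all other steps are essentially Parseval and the spectral calculus of Fourier multipliers.
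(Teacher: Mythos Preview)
Your argument is correct, but it follows a genuinely different route from the paper's proof. The paper proceeds abstractly: for $r>0$ it observes that $\Lambda_\#^{-r}:\dot{\mathbf H}_{\#\sigma}^0\to\dot{\mathbf H}_{\#\sigma}^0$ is compact (by Rellich--Kondrachov) and self-adjoint, then invokes the Hilbert--Schmidt theorem to obtain the eigenvalue sequence and orthonormal basis; positivity of eigenvalues comes from positive-definiteness of $\Lambda_\#^{-r}$, smoothness from bootstrapping $\Lambda_\#^{-rk}\mathbf w_j=(\lambda_j^{(-r)})^k\mathbf w_j$, and reality from pairing complex-conjugate eigenfunctions. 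Only \emph{after} the proof does the paper write down the explicit Fourier representation (the formulas you start from) as a remark. Your proof reverses this order: you build the real cosine/sine eigenfunctions directly from the Fourier multiplier structure and verify all properties by hand, which is more elementary (no compactness theorem or abstract spectral theorem needed) and yields the explicit basis immediately. The trade-off is that the paper's approach would transfer to other self-adjoint operators lacking an explicit Fourier diagonalization, while yours is specific to Fourier multipliers but fully constructive. Your part~(iii), using the isometry $\Lambda_\#^r:\dot{\mathbf H}_{\#\sigma}^r\to\dot{\mathbf H}_{\#\sigma}^0$ to transport the basis, coincides with the paper's argument for that part.
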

\begin{proof}
Let 
first $r>0$ and let 
us consider the continuous periodic Bessel potential operator 
$\Lambda_\#^{-r}: \dot{\mathbf H}_{\#\sigma}^0\to \dot{\mathbf H}_{\#\sigma}^{r}$.
Hence by the Rellich-Kondrachov theorem the operator 
$\Lambda_\#^{-r}: \dot{\mathbf H}_{\#\sigma}^0\to \dot{\mathbf H}_{\#\sigma}^{0}$ is compact. 
It is also self-adjoint 
since for any $\mathbf u, \mathbf v\in  \dot{\mathbf H}_{\#\sigma}^{0}$ we have,

$$
(\Lambda_\#^{-r}\mathbf u, \mathbf v)_{\dot{\mathbf H}_{\#\sigma}^{0}}=\langle \Lambda_\#^{-r}\mathbf u, \bar{\mathbf v}\rangle_\T
=\langle \mathbf u, \Lambda_\#^{-r} \bar{\mathbf v}\rangle_\T=(\mathbf u, \Lambda_\#^{-r}\mathbf v)_{\dot{\mathbf H}_{\#\sigma}^{0}}.
$$

Then the Hilbert-Schmidt theorem (see, e.g., \cite[Theorem 8.94]{Renardy-Rogers2004}) implies that
there is a sequence of nonzero real eigenvalues $\left\{\lambda^{(-r)}_j\right\}_{i=1}^\infty$ of the operator 
$\Lambda_\#^{-r}: \dot{\mathbf H}_{\#\sigma}^0\to \dot{\mathbf H}_{\#\sigma}^{0}$, such that the sequence $\left|\lambda^{(-r)}_j\right|$ is monotone non-increasing and
$
\lim _{i \rightarrow \infty} \lambda^{(-r)}_j=0 .
$
Furthermore, if each eigenvalue of $\Lambda_\#^{-r}$ is repeated in the sequence according to its multiplicity, then  there exists an orthonormal (in $\dot{\mathbf H}_{\#\sigma}^0$) set $\left\{\mathbf w_j\right\}_{i=1}^\infty$ of the corresponding eigenfunctions, i.e.,
\begin{align}\label{S.6r}
\Lambda_\#^{-r} \mathbf w_j=\lambda^{(-r)}_j \mathbf w_j .
\end{align}
Moreover, the sequence $\left\{\mathbf w_j\right\}_{i=1}^\infty$ is an orthonormal basis in $\dot{\mathbf H}_{\#\sigma}^{0}$
for $\dot{\mathbf H}_{\#\sigma}^{r}$ as a subset of $\dot{\mathbf H}_{\#\sigma}^{0}$.

In addition, since the eigenvalues are real, \eqref{S.6r} implies that the eigenfunctions are either real or appear for the same eigenvalue in complex-conjugate pairs and hence their real and imaginary parts are also eigenfunctions. This means that we can choose the orthonormal basis consisting of real eigenfunctions only.

Since $\dot{\mathbf H}_{\#\sigma}^{r}$ is dense in  $\dot{\mathbf H}_{\#\sigma}^{0}$, the sequence $\left\{\mathbf w_j\right\}_{i=1}^\infty$ is an orthonormal basis  for  the entire space $\dot{\mathbf H}_{\#\sigma}^{0}$.
The operator $\Lambda_\#^{-r}$ can be represented as
\begin{align}\label{S.7r} 
\Lambda_\#^{-r} \mathbf v=\sum_{i=1}^\infty \lambda^{(-r)}_j\langle\mathbf v,\mathbf w_j\rangle_\T \mathbf w_j
\quad\forall\, \mathbf v\in \dot{\mathbf H}_{\#\sigma}^{0},
\end{align}
where the series converges in $\dot{\mathbf H}_{\#\sigma}^{0}$.

Let us remark that for any $\mathbf v\in \dot{\mathbf H}_{\#\sigma}^{0}$
\begin{align*}%\label{S.10}
(\Lambda^{-r}_\# \mathbf v,{\mathbf v})_{\dot{\mathbf H}_{\#\sigma}^{0}}
=\langle\Lambda^{-r}_\# \mathbf v,\bar{\mathbf v}\rangle_\T
=\langle\Lambda^{-r/2}_\# \mathbf v,\overline{\Lambda^{-r/2}_\#\mathbf v}\rangle_\T=
\|{\Lambda^{-r/2}_\#\mathbf v}\|^2_{\dot{\mathbf H}_{\#\sigma}^{0}}=\|\mathbf v\|^2_{\dot{\mathbf H}_{\#\sigma}^{r/2}}\ge\|\mathbf v\|^2_{\dot{\mathbf H}_{\#\sigma}^{0}},
\end{align*}
that is, $\Lambda^{-r}_\# $ is a positive-definite operator.
To conclude that all $\lambda_j$ are positive, we observe that for the unit real eigenfunctions $\mathbf w_j$, \eqref{S.6r} implies
\begin{align*}%\label{S.10}
\lambda^{(-r)}_j=\lambda^{(-r)}_j \langle\mathbf w_j,\mathbf w_j\rangle_\T=\langle\Lambda^{-r}_\# \mathbf w_j,\mathbf w_j\rangle_\T
=\langle\Lambda^{-r/2}_\# \mathbf w_j,\Lambda^{-r/2}_\#\mathbf w_j\rangle_\T>0.
\end{align*}

Applying $\Lambda^r$ to \eqref{S.6r}, we obtain
\begin{align}\label{S.8r}
\Lambda_\#^{r} \mathbf w_j=\lambda^{(r)}_j \mathbf w_j, \text{ where } \lambda_j^{(r)}=1/\lambda_j^{(-r)}
\end{align}
implying \eqref{S.2r} with $\lambda_j^{(r)}=1/\lambda_j^{(-r)}$ and the coinciding eigenfunctions for the operators $\Lambda_\#^{r}$ and $\Lambda_\#^{-r}$.  

Since $\mathbf w_j\in \dot{\mathbf H}_{\#\sigma}^{0}$ and $\lambda_j\ne 0$, equation
\eqref{S.6r} implies 
$\mathbf w_j\in \dot{\mathbf H}_{\#\sigma}^{r}$.
Moreover, applying to \eqref{S.6r} operator $\Lambda^{-r(k-1)}$, with any integer $k$, and employing consecutively \eqref{S.8r} or \eqref{S.6r}, we obtain
\begin{align}\label{S.9r}
\Lambda^{-rk}_\# \mathbf w_j=(\lambda^{(-r)}_j)^k \mathbf w_j\quad \forall\, k\in \Z.
\end{align}
and taking into account the continuity of the operator
$\Lambda_\#^{-rk}: \dot{\mathbf H}_{\#\sigma}^0\to \dot{\mathbf H}_{\#\sigma}^{kr}$ for any integer $k$, 
we conclude that $\mathbf w_j\in \dot{\mathbf C}_{\#\sigma}^{\infty}$.

Finally, let us prove that the sequence $\{\mathbf{w}_j\}$  is an orthogonal basis also in $\dot{\mathbf H}_{\#\sigma}^{r}$.
To this end, let $\mathbf u\in \dot{\mathbf H}_{\#\sigma}^{r}$. 
We know that the series \eqref{S.9ru2} 
converges in  $\dot{\mathbf H}_{\#\sigma}^{0}$. 
Let us prove that it converges also in $\dot{\mathbf H}_{\#\sigma}^{r}$, that is, the sequence of its partial sums converges in this space.
Indeed,
\begin{align}\label{S.10ru2} 
\sum_{j=1} \langle\mathbf u,\mathbf w_j\rangle_\T \mathbf w_j
=\sum_{j=1} \langle\mathbf u,\lambda^{(r)}_j\mathbf w_j\rangle_\T \lambda^{(-r)}_j\mathbf w_j
=\sum_{j=1} \langle\mathbf u,\Lambda^r_\#\mathbf w_j\rangle_\T \Lambda^{-r}_\#\mathbf w_j
=\Lambda^{-r}_\#\sum_{j=1} \langle\Lambda^r_\#\mathbf u,\mathbf w_j\rangle_\T \mathbf w_j
\end{align}
Since $\mathbf u\in \dot{\mathbf H}_{\#\sigma}^{r}$ we have that $\Lambda^r_\#\mathbf u\in\dot{\mathbf H}_{\#\sigma}^{0}$ implying that the sequence $\sum_{i=1} \langle\Lambda^r_\#\mathbf u,\mathbf w_j\rangle_\T \mathbf w_j$ converges in 
$\dot{\mathbf H}_{\#\sigma}^{0}$  to $\Lambda^r_\#\mathbf u$ as $m\to\infty$.
The continuity of the operator $\Lambda_\#^{-r}: \dot{\mathbf H}_{\#\sigma}^0\to \dot{\mathbf H}_{\#\sigma}^{r}$ then implies that the right hand side of  \eqref{S.10ru2} converges in $\dot{\mathbf H}_{\#\sigma}^{r}$ to $\mathbf u$ together with the sequence of the partial sums in the left hand side.
This means that series \eqref{S.9ru2} converges in $\dot{\mathbf H}_{\#\sigma}^{r}$ to $\mathbf u$ as well.
Thus the set $\{\mathbf{w}_j\}$  is complete in $\dot{\mathbf H}_{\#\sigma}^{r}$.

The orthogonality of the set $\{\mathbf{w}_j\}$ in $\dot{\mathbf H}_{\#\sigma}^{r}$ is implied by the relations
$$
(\mathbf w_j, \mathbf w_k)_{\dot{\mathbf H}_{\#\sigma}^{r}}
=(\Lambda_\#^{r}\mathbf w_j, \Lambda_\#^{r}\mathbf w_k)_{\dot{\mathbf H}_{\#\sigma}^{0}}
=(\lambda_j^{(r)}\mathbf w_j, \lambda_k^{(r)}\mathbf w_k)_{\dot{\mathbf H}_{\#\sigma}^{0}}
=\lambda_j^{(r)}\lambda_k^{(r)}\langle\mathbf w_j, \mathbf w_k\rangle_\T
=\lambda_j^{(r)}\lambda_k^{(r)}\delta_{jk}.
$$
Hence the set $\{\mathbf{w}_j\}$ is an orthogonal basis in $\dot{\mathbf H}_{\#\sigma}^{r}$.

Although we started from $r>0$, in the proof we covered the cases of both positive and negative $r$.
\end{proof}

Similar to the reasoning at the end of Section 2.2 in \cite{Temam1995}, for the eigenvalues and eigenfunctions of the isotropic Stokes operator in periodic setting, let us provide an explicit representation of the eigenvalues and eigenfunctions of the operator 
$\Lambda_\#^r: \dot{\mathbf H}_{\#\sigma}^0\to \dot{\mathbf H}_{\#\sigma}^{0}$, $r\in\R$, $r\ne 0$.

Employing representations \eqref{eq:mik11} and \eqref{E3.14sigma} in \eqref{S.2r}, we obtain for a fixed $j$,
\begin{align}\label{S-E.16}
\sum_{\bs\xi\in\dot\Z^n}\varrho(\bs\xi)^{r}\widehat{\mathbf w}_j(\bs\xi)e^{2\pi i \mathbf x\cdot\bs\xi}
=\lambda^{(r)}_j\sum_{\bs\xi\in\dot\Z^n}\widehat{\mathbf w}_j(\bs\xi)e^{2\pi i \mathbf x\cdot\bs\xi},
\end{align}
that is,
\begin{align}\label{S-E.16a}
\left(\varrho(\bs\xi)^{r}-\lambda^{(r)}_j\right)\widehat{\mathbf w}_j(\bs\xi)=0
\quad\forall\,\bs\xi\in\dot\Z^n .
\end{align}
This implies that the eigenvalues and the corresponding eigenfunctions of the operator $\Lambda^r$ can be explicitly represented as 
$\{\lambda^{(r)}_j\}=\{\lambda^{(r)}_{\bs\eta,\beta}\}$, $\{\mathbf w_j\}=\{\mathbf w_{\bs\eta,\beta}\}$, where $\bs\eta\in\dot\Z^n$, $\beta=\{1,\ldots,n-1\}$,
\begin{align}\label{S-E.17}
\lambda^{(r)}_{\bs\eta,\beta}=\varrho(\bs\eta)^r=(1+|\bs\eta|^2)^{r/2},\quad
{\mathbf w}_{\bs\eta,\beta}(\mathbf x)=\mathring{\mathbf w}_{\bs\eta,\beta}e^{2\pi i \mathbf x\cdot\bs\eta}.
\end{align}
For a fixed $\bs\eta$, the $n-1$ orthonormal constant real vectors $\mathring{\mathbf w}_{\bs\eta,\beta}$, $\beta=\{1,\ldots,n-1\}$ are obtained by the orthogonalisation in $\R^n$ of the  real vector set
$$
\widetilde{\mathbf w}_{\bs\eta,\alpha}=\mathbf e_\alpha-\frac{\eta_\alpha \bs\eta}{|\bs \eta|^2},\quad \alpha=\{1,\ldots,n\},
$$
where $\mathbf e_\alpha$ are canonical (coordinate) vectors in $\R^n$.
Note that $(\widetilde{\mathbf w}_{\bs\eta,\alpha}\cdot\bs\eta)=0$.

\begin{remark}\label{RE.2}
Relations \eqref{S-E.17} particularly imply that $\lambda^{(r)}_{\bs\eta,\beta}=\lambda^r_{\bs\eta,\beta}$, where $\lambda_{\bs\eta,\beta}:=\lambda^{(1)}_{\bs\eta,\beta}=\varrho(\bs\eta)=(1+|\bs\eta|^2)^{1/2}$, i.e., $\lambda^{(r)}_j=\lambda^r_j$ and the corresponding eigenfunctions coincide for any $r\in\R$, $r\ne 0$. Since the sequence of eigenfunctions  $\{\mathbf w_j\}$ corresponding to $\lambda^{(r)}_j$ is the same for any $r\in\R$,  $r\ne 0$, Theorem \ref{TE.1r} implies that the sequence constitutes a real orthogonal basis in any space $\dot{\mathbf H}_{\#\sigma}^{r}$, $r\in\R$.
\end{remark}

\subsection{Isomorphism of divergence and gradient operators in periodic spaces}
In the following assertion we provide for arbitrary $s\in\R$ and dimension $n\ge 2$ the periodic version of Bogovskii/deRham--type results well known for non-periodic domains and particular values of $s$, see, e.g., \cite{Bogovskii1979},  \cite{AmCiMa2015} and references therein.
\begin{lemma}\label{div-grad-is}
Let $s\in\R$ and $n\ge 2$. 
The operators 
\begin{align}
&\div\, :  \dot{\mathbf H}_{\# g}^{s+1}\to  \dot{H}_{\#}^{s},
\label{2.37}
\\
&{\rm grad}\, :  \dot{H}_{\#}^{s}\to  \dot{\mathbf H}_{\# g}^{s-1}
\label{2.38}
\end{align}
are isomorphisms.
\end{lemma}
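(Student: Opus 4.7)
The plan is to argue in Fourier coefficients, exploiting the fact that for $q\in \dot H_\#^s$ the Fourier series runs only over $\dot\Z^n$ (since $\hat q(\mathbf 0)=0$), so the multipliers $2\pi i\bs\xi$ (for the gradient) and $-4\pi^2|\bs\xi|^2$ (for $\div\nabla=\Delta$) never vanish and can be inverted termwise. The elementary inequalities \eqref{eq:mik9}, namely $\tfrac12\varrho(\bs\xi)^2\le|2\pi\bs\xi|^2\le\varrho(\bs\xi)^2$ on $\dot\Z^n$, will convert all the manipulations into equivalent norms in the spaces $\dot H_\#^{\,\cdot}$ and $\dot{\mathbf H}_{\#g}^{\,\cdot}$ (recalling that the latter carries the inherited norm \eqref{eq:mik10}).

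First I would handle the gradient operator \eqref{2.38}. For $q\in\dot H_\#^s$, by \eqref{eq:mik11} we have $\widehat{\nabla q}(\bs\xi)=2\pi i\bs\xi\,\hat q(\bs\xi)$, so
\begin{align*}
\|\nabla q\|_{\dot{\mathbf H}_\#^{s-1}}^2=\sum_{\bs\xi\in\dot\Z^n}\varrho(\bs\xi)^{2(s-1)}|2\pi\bs\xi|^2|\hat q(\bs\xi)|^2,
\end{align*}
which by \eqref{eq:mik9} lies between $\tfrac12\|q\|_{\dot H_\#^s}^2$ and $\|q\|_{\dot H_\#^s}^2$. Hence $\nabla$ is continuous, injective (if $\nabla q=0$ then $\hat q(\bs\xi)=0$ for $\bs\xi\ne\mathbf 0$, and $\hat q(\mathbf 0)=0$ because $q\in\dot H_\#^s$), and surjective onto $\dot{\mathbf H}_{\#g}^{s-1}$ by the very definition of that space; its inverse is bounded by the same equivalence. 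This yields isomorphism \eqref{2.38}.

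Next I would treat \eqref{2.37}. Any $\mathbf w\in\dot{\mathbf H}_{\#g}^{s+1}$ is of the form $\mathbf w=\nabla q$ with $q\in\dot H_\#^{s+2}$, so $\widehat{\div\mathbf w}(\bs\xi)=-4\pi^2|\bs\xi|^2\hat q(\bs\xi)$ and, using \eqref{eq:mik9} twice,
\begin{align*}
\|\div\mathbf w\|_{\dot H_\#^s}^2=\sum_{\bs\xi\in\dot\Z^n}\varrho(\bs\xi)^{2s}\,16\pi^4|\bs\xi|^4|\hat q(\bs\xi)|^2\sim\sum_{\bs\xi\in\dot\Z^n}\varrho(\bs\xi)^{2(s+2)}|\hat q(\bs\xi)|^2\sim\|\mathbf w\|_{\dot{\mathbf H}_{\#g}^{s+1}}^2,
\end{align*}
where the last equivalence is just the step already used for $\nabla$ applied with exponent $s+1$. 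This gives both continuity and boundedness from below of $\div$ on $\dot{\mathbf H}_{\#g}^{s+1}$, which entails injectivity and that the range is closed. Surjectivity is established by construction: given $f\in\dot H_\#^s$, define $q$ by $\hat q(\mathbf 0):=0$ and $\hat q(\bs\xi):=-\hat f(\bs\xi)/(4\pi^2|\bs\xi|^2)$ for $\bs\xi\in\dot\Z^n$; the estimate above shows $q\in\dot H_\#^{s+2}$, hence $\mathbf w:=\nabla q\in\dot{\mathbf H}_{\#g}^{s+1}$, and by construction $\div\mathbf w=f$.

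There is no real obstacle: the argument is essentially bookkeeping of Fourier multipliers, the key point being that on $\dot\Z^n$ the symbols $2\pi i\bs\xi$ and $-4\pi^2|\bs\xi|^2$ are bounded above and below (in absolute value) by constant multiples of $\varrho(\bs\xi)$ and $\varrho(\bs\xi)^2$ respectively, which is exactly \eqref{eq:mik9}. The only subtlety worth pointing out is that one must work in the dotted spaces throughout so as to avoid the singularity at $\bs\xi=\mathbf 0$ when inverting the Laplace symbol; the hypothesis $\mathbf w\in\dot{\mathbf H}_{\#g}^{s+1}$ (equivalently, $q\in\dot H_\#^{s+2}$) and $f\in\dot H_\#^s$ ensures exactly this.
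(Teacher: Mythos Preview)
Your proof is correct and follows essentially the same Fourier-multiplier approach as the paper: both invert the symbols $2\pi i\bs\xi$ and $-4\pi^2|\bs\xi|^2$ termwise on $\dot\Z^n$ and use \eqref{eq:mik9} for the norm equivalences. The only cosmetic differences are that you treat the gradient first and then handle injectivity of $\div$ via the two-sided estimate obtained from $\div\nabla=\Delta$, whereas the paper treats $\div$ first and proves its injectivity by invoking $\dot{\mathbf H}_{\#g}^{s+1}\cap\dot{\mathbf H}_{\#\sigma}^{s+1}=\{\mathbf 0\}$; the constructed inverses are identical.
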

\begin{proof}
(i) Since $\dot{\mathbf H}_{\# g}^{s+1}\subset \dot{\mathbf H}_{\#}^{s+1}$, operator \eqref{2.37} is continuous.
Let $f\in \dot{H}_{\#}^{s}$ and let us consider the equation 
\begin{align}\label{E2.39}
\div\,\mathbf F=f
\end{align}
for $\mathbf F\in \dot{\mathbf H}_{\# g}^{s+1}$.
Calculating the Fourier coefficients of both sides of the equation, we obtain
\begin{align*}%\label{E2.40}
2\pi i\bs\xi\cdot\widehat{\mathbf F}(\bs\xi)=\widehat f(\bs\xi), \quad \bs\xi\in\dot\Z^n.
\end{align*}
By inspection one can see that this equation has a solution in the form
\begin{align}\label{E2.41}
\widehat{\mathbf F}(\bs\xi)=\frac{\bs\xi \widehat f(\bs\xi)}{2\pi i|\bs\xi|^2}, \quad \bs\xi\in\dot\Z^n,
\end{align}
that is,
\begin{align*}%\label{E2.41}
\widehat{\mathbf F}(\bs\xi)=2\pi i \bs\xi\widehat q=\widehat{\nabla q},
\text { where }
\widehat q=-\frac{\widehat f(\bs\xi)}{(2\pi)^2|\bs\xi|^2}, \quad \bs\xi\in\dot\Z^n.
\end{align*}
By \eqref{E2.41}, \eqref{eq:mik9} and \eqref{eq:mik10}, we obtain
$$
\|\mathbf F\|^2_{\dot{\mathbf H}_{\#}^{s+1}}
=\sum_{\bs\xi\in\dot\Z^n}\varrho(\bs\xi)^{2(s+1)}|\widehat{\mathbf F}(\bs\xi)|^2
=\sum_{\bs\xi\in\dot\Z^n}\varrho(\bs\xi)^{2s}\frac{\varrho(\bs\xi)^{2}}{(2\pi)^2|\bs\xi|^2}|\widehat{f}(\bs\xi)|^2
\le 2\sum_{\bs\xi\in\dot\Z^n}\varrho(\bs\xi)^{2s}|\widehat{f}(\bs\xi)|^2
=2\|f\|^2_{\dot{H}_{\#}^{s}}.
$$
Hence the solution $\mathbf F$ given by \eqref{E2.41} belongs to $\dot{\mathbf H}_{\# g}^{s+1}$ and satisfies the estimate 
$
\|\mathbf F\|_{\dot{\mathbf H}_{\#}^{s+1}}\le\sqrt{2}\|f\|_{\dot{H}_{\#}^{s}}.
$
There are no other solutions in $\dot{\mathbf H}_{\# g}^{s+1}$ since otherwise the difference, $\widetilde{\mathbf F}$, of two solutions of equation \eqref{E2.39} would satisfy equation $\div\,\widetilde{\mathbf F}=0$,  and hence belong to $\dot{\mathbf H}_{\# g}^{s+1}\cap \dot{\mathbf H}_{\# \sigma}^{s+1}=\{\mathbf 0\}$.
Thus operator \eqref{2.37} is an isomorphism.

(ii)
By the definition of the space $\dot{\mathbf H}_{\# g}^{s-1}$, operator \eqref{2.38} is continuous.
Let $\mathbf F\in \dot{\mathbf H}_{\# g}^{s-1}$ and let us consider the equation 
\begin{align}\label{E2.42-0}
\nabla f=\mathbf F
\end{align}
for $f\in \dot{H}_{\#}^{s}$.
Equation \eqref{E2.42-0} has at most one solution since otherwise the difference of any two solutions, $\widetilde f$, would satisfy the equation 
$\nabla \widetilde f=\mathbf 0$ implying that $\widetilde f=const=0$ because $f\in \dot{H}_{\#}^{s}$.
Taking into account that $\mathbf F=\nabla q$ for some $q\in \dot{H}_{\#}^{s}$, we conclude that there exists a solution of equation \eqref{E2.42-0}, namely $f=q$. 

Let us calculate the norm estimate for this solution.
Calculating the Fourier coefficients of both sides of equation \eqref{E2.42-0}, we obtain
\begin{align}\label{E2.42-1}
2\pi i\bs\xi \widehat{f}(\bs\xi)=\widehat{\mathbf F}(\bs\xi), \quad \bs\xi\in\dot\Z^n.
\end{align}
Then
\begin{align}\label{E2.42}
\widehat f(\bs\xi)=\frac{\bs\xi\cdot\widehat{\mathbf F}(\bs\xi)}{2\pi i|\bs\xi|^2}, \quad \bs\xi\in\dot\Z^n.
\end{align}
By \eqref{E2.42}, \eqref{eq:mik9} and \eqref{eq:mik10}, we obtain
$$
\|f\|^2_{\dot{H}_{\#}^{s}}=\sum_{\bs\xi\in\dot\Z^n}\varrho(\bs\xi)^{2s}|\widehat{f}(\bs\xi)|^2
=\sum_{\bs\xi\in\dot\Z^n}\frac{\varrho(\bs\xi)^{2s}}{(2\pi)^2|\bs\xi|^4}|\bs\xi\cdot\widehat{\mathbf F}(\bs\xi)|^2
\le 2\sum_{\bs\xi\in\dot\Z^n}\varrho(\bs\xi)^{2(s-1)}|\widehat{\mathbf F}(\bs\xi)|^2
=2\|\mathbf F\|^2_{\dot{\mathbf H}_{\#}^{s-1}}.
$$
Hence the solution $f$ given by \eqref{E2.42} belongs to $\dot{H}_{\#}^{s}$ and satisfies the estimate 
$
\|f\|_{\dot{H}_{\#}^{s}}\le\sqrt{2}\|\mathbf F\|_{\dot{\mathbf H}_{\#}^{s-1}}.
$
Thus operator \eqref{2.38} is an isomorphism.
\end{proof}

\subsection{Some functional analysis results}
The Aubin–Lions Lemma, see 
\cite[Chapter 1, Theorem 5.1]{Lions1969}, has been generalised in \cite{Simon1987}.
We provide it in the form of Theorem 4.12 in \cite{RRS2016}.
\begin{theorem}[Aubin–Lions Lemma]\label{T-A-L} Suppose that $G \subset H \subset K$ where $G, H$ and $K$ are reflexive Banach spaces and the embedding $G \subset H$ is compact. Let $1\le p\le\infty$ and $1\le q <\infty$. If the sequence $u_n$ is bounded in $L_q(0, T ; G)$ and $\partial_t u_n$ is bounded in $L_{p}(0, T ; K)$, then there exists a subsequence of $u_n$ that is strongly convergent in $L_{q}(0, T ; H)$.
\end{theorem}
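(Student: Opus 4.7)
The plan is to prove the lemma in two logical stages tied together by Ehrling's interpolation inequality. Stage one is to establish that, under the hypotheses $G\subset H\subset K$ with $G\hookrightarrow H$ compact, for every $\eta>0$ there exists $C_\eta>0$ such that
\[
\|v\|_H\le \eta\,\|v\|_G+C_\eta\,\|v\|_K\quad\forall\,v\in G.
\]
I would prove this by contradiction: negating the claim yields $\eta_0>0$ and $\tilde v_m\in G$ with $\|\tilde v_m\|_G=1$ and $\|\tilde v_m\|_H>\eta_0+m\|\tilde v_m\|_K$, so $\|\tilde v_m\|_K\to 0$; the compact embedding extracts a subsequence converging strongly to some $\tilde v$ in $H$, hence also in $K$ by continuity $H\hookrightarrow K$; the $K$-limit must be $0$, so $\tilde v=0$, forcing $\|\tilde v_m\|_H\to 0$ and contradicting $\|\tilde v_m\|_H>\eta_0$. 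Raising the resulting inequality to the $q$-th power and integrating pointwise in $t\in(0,T)$ gives, for all $n,m$,
\[
\|u_n-u_m\|_{L_q(0,T;H)}\le C\eta\,\|u_n-u_m\|_{L_q(0,T;G)}+CC_\eta\,\|u_n-u_m\|_{L_q(0,T;K)}.
\]
Setting $M:=\sup_n\|u_n\|_{L_q(0,T;G)}<\infty$, the first term is controlled by $2CM\eta$; since $\eta>0$ is arbitrary, it suffices to extract a subsequence that is Cauchy in $L_q(0,T;K)$.

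Stage two is to produce such a subsequence by applying the Fr\'echet--Kolmogorov--Simon compactness criterion in the Bochner space $L_q(0,T;K)$. Two ingredients must be verified. First, for each $0\le t_1<t_2\le T$, the set $\bigl\{\int_{t_1}^{t_2}u_n(s)\,ds\bigr\}_n$ is pre-compact in $K$: by H\"older's inequality it is bounded in $G$ by $(t_2-t_1)^{1-1/q}M$, and composing the compact embedding $G\hookrightarrow H$ with the continuous injection $H\hookrightarrow K$ yields a compact map $G\hookrightarrow K$, so the bounded image is pre-compact. Second, the uniform translation estimate
\[
\sup_n\|u_n(\cdot+h)-u_n(\cdot)\|_{L_q(0,T-h;K)}\longrightarrow 0,\qquad h\to 0^+,
\]
follows from the fundamental theorem of calculus in $K$: writing $u_n(t+h)-u_n(t)=\int_t^{t+h}\partial_s u_n(s)\,ds$ and applying H\"older when $p>1$ gives $\|u_n(t+h)-u_n(t)\|_K\le h^{1-1/p}\|\partial_t u_n\|_{L_p(0,T;K)}$, whence the required uniform decay after integration in $t$. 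Invoking the Fr\'echet--Kolmogorov--Simon theorem then yields a subsequence strongly convergent in $L_q(0,T;K)$, which by stage one is strongly convergent also in $L_q(0,T;H)$.

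The main obstacle I anticipate is the borderline case $p=1$ of the translation estimate, where the crude H\"older bound gives no $h^\alpha$ decay; one must instead rely on the uniform absolute continuity of the scalar integrals $I\mapsto\int_I\|\partial_s u_n(s)\|_K\,ds$, which follows from their uniform $L_1$-bound by a standard Vitali-type argument, and then transfer this absolute continuity to the $K$-valued difference through the fundamental-theorem identity. A secondary point of care is that for $q=1$ the space $L_1(0,T;G)$ is not reflexive, so a preliminary weak-compactness extraction is unavailable; however this is harmless for the present argument, since the Fr\'echet--Kolmogorov--Simon criterion delivers a strongly Cauchy subsequence directly, without any prior weak step.
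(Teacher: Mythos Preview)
The paper does not supply its own proof of this theorem: it is stated as a quotation of the literature, with references to Lions, Simon, and Robinson--Rodrigo--Sadowski, so there is no in-paper argument to compare against. Your outline is the standard route to Simon's compactness theorem via Ehrling's lemma and the Fr\'echet--Kolmogorov--Simon criterion, and stage one together with the $p>1$ half of stage two is fine.

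There is, however, a genuine slip in your handling of the borderline $p=1$. You claim that the uniform absolute continuity of the integrals $I\mapsto\int_I\|\partial_s u_n(s)\|_K\,ds$ ``follows from their uniform $L_1$-bound by a standard Vitali-type argument''. This is false: a uniform $L_1$ bound is strictly weaker than equi-integrability (take $f_n=n\chi_{[0,1/n]}$, which has $\|f_n\|_{L_1}=1$ but $\int_0^{1/n}f_n=1$ for every $n$). Fortunately the repair is elementary and does not need equi-integrability at all. From
\[
\|u_n(t+h)-u_n(t)\|_K\le\int_t^{t+h}\|\partial_s u_n(s)\|_K\,ds\le M_1:=\sup_n\|\partial_t u_n\|_{L_1(0,T;K)}
\]
you obtain a uniform pointwise bound, and combining it with the Fubini estimate $\int_0^{T-h}\int_t^{t+h}\|\partial_s u_n(s)\|_K\,ds\,dt\le hM_1$ gives
\[
\int_0^{T-h}\|u_n(t+h)-u_n(t)\|_K^q\,dt\le M_1^{\,q-1}\int_0^{T-h}\|u_n(t+h)-u_n(t)\|_K\,dt\le M_1^{\,q}\,h,
\]
which delivers the required uniform translation decay for every $1\le q<\infty$ without any Vitali-type hypothesis.
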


The following assertion is available in \cite[Chapter 3, Lemma 1.4]{Temam2001}
\begin{lemma}\label{L1.4Tem} Let $X$ and $Y$ be two Banach spaces, such that $X\subset Y$with a continuous injection. If a function $v$ belongs to $L_\infty (0, T; X)$ and is weakly continuous with values in $Y$, then $v$ is weakly continuous with values in $X$.
\end{lemma}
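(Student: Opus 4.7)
The plan is to fix an arbitrary $t_0\in[0,T]$ and any functional $\phi\in X^*$, and show that $\langle\phi,v(t)\rangle_{X^*,X}\to\langle\phi,v(t_0)\rangle_{X^*,X}$ as $t\to t_0$. The strategy has two stages: first identify $v(t_0)$ unambiguously as an element of $X$ (since $v\in L_\infty(0,T;X)$ only specifies the value almost everywhere), then establish the weak convergence in $X$ via a subsequence-extraction argument exploiting the uniform bound in $X$ together with the assumed weak convergence in $Y$.

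For the first stage, set $M:=\|v\|_{L_\infty(0,T;X)}$ and pick $t_n\to t_0$ on which $\|v(t_n)\|_X\le M$. In the setting of interest in this paper, $X$ is reflexive (a Hilbert Sobolev space), so by the Eberlein--\v{S}mulian theorem a subsequence satisfies $v(t_{n_k})\rightharpoonup \bar v$ weakly in $X$ with $\|\bar v\|_X\le M$. The continuous embedding $X\hookrightarrow Y$ dualises to a bounded map $Y^*\to X^*$, so weak convergence in $X$ implies weak convergence in $Y$; combined with the hypothesis $v(t_n)\rightharpoonup v(t_0)$ weakly in $Y$ and uniqueness of weak limits in $Y$, this forces $\bar v=v(t_0)$, so in particular $v(t_0)\in X$ with $\|v(t_0)\|_X\le M$.

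For the second stage, suppose for contradiction that $v$ is not $X$-weakly continuous at $t_0$. Then there would exist $\phi\in X^*$, $\varepsilon>0$, and a sequence $t_n\to t_0$ with $|\langle\phi,v(t_n)-v(t_0)\rangle|\ge\varepsilon$ for all $n$. Since $\|v(t_n)\|_X\le M$, the extraction argument of the previous paragraph applied to this very sequence produces a subsequence with $v(t_{n_k})\rightharpoonup v(t_0)$ weakly in $X$, whence $\langle\phi,v(t_{n_k})\rangle\to\langle\phi,v(t_0)\rangle$, contradicting the choice of $\phi$. Hence $v(t)\rightharpoonup v(t_0)$ weakly in $X$ as $t\to t_0$, and since $t_0$ was arbitrary, $v$ is weakly continuous into $X$ on $[0,T]$.

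The main obstacle is the weak sequential compactness invoked twice above: it requires either reflexivity of $X$ or that bounded subsets of $X$ are weakly relatively compact by some other route (for example if $X=Z^*$ with $Z$ separable, via Banach--Alaoglu and the metrisability of the weak-$*$ topology on bounded sets). In the applications within this paper no difficulty arises because the relevant $X$ is always a Hilbert space, so reflexivity is automatic; the bare statement as written tacitly requires such an assumption on $X$.
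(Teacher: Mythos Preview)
The paper does not supply its own proof of this lemma; it simply quotes the statement from \cite[Chapter 3, Lemma 1.4]{Temam2001}. Your argument is the standard one (and is essentially the proof Temam gives): use the uniform $X$-bound to extract $X$-weakly convergent subsequences and identify the limit via the assumed $Y$-weak continuity.

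Two minor remarks. First, in your second stage you write ``since $\|v(t_n)\|_X\le M$'' for the sequence furnished by the contradiction hypothesis; this is legitimate, but only because your first stage, applied at \emph{every} point of $[0,T]$, upgrades the a.e.\ bound to a pointwise bound $\|v(t)\|_X\le M$ for all $t$. It would be cleaner to state this upgrade explicitly before entering the second stage. Second, your caveat about reflexivity is well taken: the subsequence extraction via Eberlein--\v{S}mulian genuinely needs it (or an equivalent compactness hypothesis), and the lemma as stated in the paper omits this. As you note, all applications here take $X$ to be a periodic Sobolev space, hence Hilbert, so nothing is lost.
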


Theorem 3.1 and Remark 3.2 in Chapter 1 of \cite{Lions-Magenes1}  imply the following assertion.
\begin{theorem}\label{LM-T3.1}
Let $X$ and $Y$ be separable Hilbert spaces and $X\subset Y$ with continuous injection.
Let $u\in W^1(0,T;X, Y)$.
Than $u$ almost everywhere on $[0,T]$ equals to a function   
$\tilde u\in \mathcal C^0([0,T];Z)$, where $Z=[X,Y]_{1/2}$ is the intermediate space,
Then the trace $u(0)\in Z$ is defined as the corresponding value of $\tilde u\in \mathcal C^0([0,T];Z)$ at $t=0$.
\end{theorem}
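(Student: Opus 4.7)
The plan is to reduce the statement to a single pointwise embedding estimate
\begin{equation*}
\|u(t)\|_{[X,Y]_{1/2}}^2 \le C\bigl(\|u\|_{L_2(\R;X)}^2 + \|u'\|_{L_2(\R;Y)}^2\bigr),\qquad t\in\R,
\end{equation*}
valid for smooth compactly supported $u:\R\to X$. First I would extend $u\in W^1(0,T;X,Y)$ to some $\tilde u\in W^1(\R;X,Y)$ of compact support with $\|\tilde u\|_{W^1(\R;X,Y)} \le C_T\|u\|_{W^1(0,T;X,Y)}$, via a standard bounded linear extension (reflection across the endpoints $0$ and $T$ composed with a smooth cut-off). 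Mollifying in time, $u_n:=\rho_n\ast\tilde u$ with $\rho_n$ a smooth approximation of the identity on $\R$, gives $u_n\in C^\infty(\R;X)$ with $u_n\to\tilde u$ in $W^1(\R;X,Y)$.

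The key step is the pointwise estimate. Without loss of generality $X$ is dense in $Y$: otherwise, taking the orthogonal decomposition $Y=\overline{X}^{Y}\oplus (\overline{X}^{Y})^{\perp}$ and noting that $u(t)\in X$ for a.e.\ $t$, the component of $u$ in $(\overline{X}^{Y})^{\perp}$ vanishes and one may replace $Y$ by $\overline{X}^{Y}$. By the Riesz representation theorem applied to the $X$-inner product, the embedding induces a positive self-adjoint operator $A$ on $Y$ with $D(A)=X$ and $\|Ax\|_Y=\|x\|_X$, whose functional calculus identifies $[X,Y]_{1/2}$ with $D(A^{1/2})$ equipped with the graph norm. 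By the spectral theorem it then suffices to check the scalar inequality
\begin{equation*}
\lambda\,|\alpha(t)|^2 \le \int_{\R}\bigl(\lambda^2|\alpha(s)|^2 + |\alpha'(s)|^2\bigr)\,ds\qquad (\lambda\ge 0,\ \alpha\in C^1_c(\R)),
\end{equation*}
which follows from $|\alpha(t)|^2=2\,\mathrm{Re}\!\int_{-\infty}^t \alpha(s)\,\overline{\alpha'(s)}\,ds$ and the weighted Young inequality $2\lambda|\alpha||\alpha'|\le\lambda^2|\alpha|^2+|\alpha'|^2$. Integrating the scalar bound against the spectral measure of $A$ then yields the desired pointwise estimate for every $u_n$.

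Applying the pointwise estimate to the differences $u_n-u_m$ shows that $\{u_n\}$ is Cauchy in $C^0([0,T];[X,Y]_{1/2})$, so it converges uniformly to some $\tilde u\in C^0([0,T];[X,Y]_{1/2})$. Since $u_n\to u$ also in $L_2(0,T;X)\hookrightarrow L_2(0,T;[X,Y]_{1/2})$, the two limits must coincide a.e.\ on $[0,T]$, giving the desired representative; the trace $u(0):=\tilde u(0)\in[X,Y]_{1/2}$ is then unambiguously defined. The main obstacle is the pointwise estimate itself: once the scalar inequality is in hand, the remaining density argument is routine, but the spectral reduction needs careful treatment of the cases where $X$ is not dense in $Y$ and where $A$ has continuous spectrum.
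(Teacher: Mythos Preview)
The paper does not give its own proof of this theorem; it is simply quoted as a consequence of Theorem~3.1 and Remark~3.2 in Chapter~1 of Lions--Magenes, so there is nothing to compare against at the level of detail you have supplied.

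Your outline is in fact a faithful reconstruction of the Lions--Magenes argument. They, too, pass (after extension to $\R$ and regularisation in $t$) to a self-adjoint operator $\Lambda$ on $Y$ with $D(\Lambda)=X$, identify $[X,Y]_{1/2}$ with $D(\Lambda^{1/2})$, and reduce the trace inequality to a one-parameter family of scalar estimates via the spectral theorem; your inequality $\lambda|\alpha(t)|^2\le\int_\R(\lambda^2|\alpha|^2+|\alpha'|^2)$ is exactly the scalar core of their proof. The only place where your sketch is a little too quick is the sentence ``By the Riesz representation theorem \dots\ the embedding induces a positive self-adjoint operator $A$ on $Y$ with $D(A)=X$ and $\|Ax\|_Y=\|x\|_X$'': this is true, but it is more than Riesz --- it is the content of Lions--Magenes, Chapter~1, \S2 (one applies spectral theory to the Gram operator $S$ of the embedding, defined by $(Su,v)_X=(u,v)_Y$, and sets $A=(S^{-1})^{1/2}$ or equivalently passes to the graph-norm operator). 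Once that is granted, the rest of your argument --- the scalar bound, integration against the spectral measure, and the Cauchy-in-$\mathcal C^0$ conclusion for the mollified sequence --- is correct and complete. Your remark on reducing to the case $X$ dense in $Y$ is also appropriate (and harmless here, since in every application in the paper the embedding is dense).
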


Let us prove the following assertion inspired by  Lemmas 1.2 and 1.3 in Chapter 3 of  \cite{Temam2001}.
\begin{lemma}\label{L4.9}%Real version
Let $s,s'\in\R$, $s'\le s$ and $u\in W^1(0,T;H^s_\#,H^{s'}_\#)$ be real-valued. 

(i) Then
\begin{align}\label{E3.69}
\partial_t\|u\|^2_{H^{(s+s')/2}_\#}= 2\langle \Lambda_\#^{s'}u',\Lambda_\#^{s}u\rangle_{\T} = 2\langle \Lambda_\#^{s'+s}u',u\rangle_{\T} 
\end{align}
for a.e. $t\in(0,T)$ and also in the distribution sense on $t\in (0,T)$.

(ii) Moreover, for any real $v\in W^1(0,T;{  H}_{\#}^{-s'}, {  H}_{\#}^{-s})$ and $t\in(0,T]$,
\begin{align}
\label{E3.30}
\int_0^t\left[\langle {  u}' (\tau),{  v} (\tau)\rangle _{\T }
+\langle {  u} (\tau),{  v}' (\tau)\rangle _{\T }\right]d\tau
=\left\langle {  u}(t),{  v}(t)\right\rangle_{\T }
-\left\langle {  u}(0),{  v}(0)\right\rangle _{\T }.
\end{align}
\end{lemma}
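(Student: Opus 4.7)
The plan is to reduce both parts of Lemma \ref{L4.9} to the classical Lions--Magenes/Temam differentiation formula $\partial_t\|w\|_H^2 = 2\langle w',w\rangle_{V',V}$ on a Hilbert triple $V\subset H\subset V'$, via a symmetric Bessel-potential change of variable that places everything in the standard setting. Set $\sigma := (s-s')/2 \ge 0$ and $w := \Lambda_\#^{(s+s')/2}u$. Because the operator $\Lambda_\#^r$ from \eqref{E3.14} commutes with $\partial_t$ and is an isomorphism $H_\#^r \to H_\#^{r-(s+s')/2}$ for every $r \in \R$, one has $w \in L_2(0,T;H_\#^{\sigma})$ and $w' = \Lambda_\#^{(s+s')/2}u' \in L_2(0,T;H_\#^{-\sigma})$, so we land exactly in the triple $H_\#^{\sigma}\subset L_{2\#}\subset H_\#^{-\sigma}$.

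For part (i), the classical result (e.g.\ \cite[Chapter 3, Lemma 1.2]{Temam2001} or \cite[Chapter 1]{Lions-Magenes1}) yields
\begin{equation*}
\partial_t\|w(\cdot,t)\|_{L_{2\#}}^2 = 2\langle w'(\cdot,t),w(\cdot,t)\rangle_\T,
\end{equation*}
both a.e.\ on $(0,T)$ and in $\mathcal D'(0,T)$. By Parseval, $\|w\|_{L_{2\#}}^2 = \|u\|_{H_\#^{(s+s')/2}}^2$. A direct Fourier expansion via \eqref{E3.3}, combined with the reality relation $\hat u(-\bs\xi) = \overline{\hat u(\bs\xi)}$, identifies the three expressions $\langle w',w\rangle_\T$, $\langle\Lambda_\#^{s'}u',\Lambda_\#^s u\rangle_\T$ and $\langle\Lambda_\#^{s+s'}u',u\rangle_\T$ with the common series $\sum_{\bs\xi\in\Z^n}\varrho(\bs\xi)^{s+s'}\hat u'(\bs\xi)\overline{\hat u(\bs\xi)}$, which establishes \eqref{E3.69}.

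For part (ii), I would proceed by polarisation. In addition to $\tilde u := w = \Lambda_\#^{(s+s')/2}u$, set $\tilde v := \Lambda_\#^{-(s+s')/2}v$. Then $\tilde v \in L_2(0,T;H_\#^{\sigma})$ and $\tilde v' \in L_2(0,T;H_\#^{-\sigma})$, so $\tilde u$ and $\tilde v$ both lie in $W^1(0,T;H_\#^{\sigma},H_\#^{-\sigma})$, the setting to which part (i) applies. The same Fourier bookkeeping yields the identifications $\langle u,v\rangle_\T = \langle\tilde u,\tilde v\rangle_\T$, $\langle u',v\rangle_\T = \langle\tilde u',\tilde v\rangle_\T$ and $\langle u,v'\rangle_\T = \langle\tilde u,\tilde v'\rangle_\T$, so it suffices to prove \eqref{E3.30} for $\tilde u,\tilde v$. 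Applying part (i) to $\tilde u$, $\tilde v$, and $\tilde u+\tilde v$ and subtracting the three resulting identities makes the squared terms cancel and isolates the distributional equality $\partial_t\langle\tilde u,\tilde v\rangle_\T = \langle\tilde u',\tilde v\rangle_\T + \langle\tilde u,\tilde v'\rangle_\T$ on $(0,T)$. Both sides belong to $L_1(0,T)$ by Cauchy--Schwarz, and $\tilde u,\tilde v\in\mathcal C^0([0,T];L_{2\#})$ by Theorem \ref{LM-T3.1}, so integration from $0$ to $t\in(0,T]$ produces \eqref{E3.30}.

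The only technical point hiding behind this argument is the classical formula $\partial_t\|w\|_H^2 = 2\langle w',w\rangle_{V',V}$ itself, usually proved by convolving $w$ with a time mollifier (after extending $w$ by reflection across the endpoints of $[0,T]$), differentiating the smooth-in-time approximation pointwise, and passing to the limit after pairing with a test function $\phi\in\mathcal C_c^\infty(0,T)$. Since we may invoke it as a black box from \cite{Temam2001} or \cite{Lions-Magenes1}, the remaining work is routine Fourier bookkeeping together with the polarisation algebra above, and no further real obstacle arises.
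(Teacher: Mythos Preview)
Your proof is correct and takes a genuinely different route from the paper's. The paper proves both parts by direct approximation: for (i) it picks smooth-in-time functions $u_m\to u$ in $W^1(0,T;H^s_\#,H^{s'}_\#)$, verifies \eqref{E3.69} pointwise for each $u_m$ by elementary differentiation, and passes to the limit in $L_1(0,T)$ and hence in distributions; for (ii) it runs the same approximation simultaneously for $u$ and $v$, writes the classical product rule for the smooth pair $(u_m,v_k)$, integrates, and takes the double limit. Your approach instead conjugates by $\Lambda_\#^{\pm(s+s')/2}$ to land in the symmetric triple $H_\#^\sigma\subset L_{2\#}\subset H_\#^{-\sigma}$, invokes the classical Temam/Lions--Magenes lemma once as a black box for (i), and then obtains (ii) from (i) by pure polarisation algebra rather than a second approximation argument. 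Your argument is more economical and makes the structure transparent --- the general $(s,s')$ statement is just the classical $(\sigma,-\sigma)$ statement after an isometric change of variable --- while the paper's argument is more self-contained, needing only the density of smooth-in-time functions in $W^1$ rather than an external reference.
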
 
\begin{proof}
(i) Since $u\in W^1(0,T;H^s_\#,H^{s'}_\#)$,
there exists a sequence of infinitely differentiable functions $\{u_m\}$   from $[0, T]$ onto $H^{s}_\#$, such that
\begin{align}\label{E3.70}
u_m\to u \text{ in } W^1(0,T;H^s_\#,H^{s'}_\#)\quad  \text{as }m \rightarrow \infty.
\end{align}
For each $u_m$, we have
\begin{align} \label{E3.71}
\partial_t\|u_m(t)\|^2_{H^{(s+s')/2}_\#}
&=\partial_t\|\Lambda_\#^{(s+s')/2}u_m(t)\|^2_{H^{0}_\#}
=\partial_t\left\langle\Lambda_\#^{(s+s')/2}u_m(t), \Lambda_\#^{(s+s')/2}{u_m(t)}\right\rangle_{\T}\nonumber\\
&=2\Re\left\langle\Lambda_\#^{(s+s')/2}{u}_m^{\prime}(t), \Lambda_\#^{(s+s')/2}{u_m(t)}\right\rangle_{\T}
=2\Re\left\langle \Lambda_\#^{s'}u_m^{\prime}(t), \Lambda_\#^{s}{u_m(t)}\right\rangle_{\T}.
\end{align}
By \eqref{E3.70},
\begin{align*}
&\|u_m\|^2_{H^{s}_\#}=\|\Lambda_\#^{s}u_m\|^2_{L_{2\#}}\to \|\Lambda_\#^{s}u\|^2_{L_{2\#}}=\|u\|^2_{H^{s}_\#} \text { in } L_{1\#}(0, T), \\
&\|u'_m\|^2_{H^{s'}_\#}=\|\Lambda_\#^{s'}u'_m\|^2_{L_{2\#}}\to \|\Lambda_\#^{s'}u'\|^2_{L_{2\#}}=\|u'\|^2_{H^{s'}_\#} 
\text { in } L_{1\#}(0, T).
\end{align*}
Hence
\begin{align*} 
\left\langle \Lambda_\#^{s'}u_m^{\prime}, \Lambda_\#^{s}{u_m}\right\rangle_{\T} \to
\left\langle \Lambda_\#^{s'}u^{\prime}, \Lambda_\#^{s}{u}\right\rangle_{\T}\quad \text { in } {L}_{1\#}(0, T).
\end{align*}
These convergences also  hold for a.e. $t\in(0,T)$ and  in the distribution sense; therefore we are allowed to pass to the limit in \eqref{E3.71} in the distribution sense, arriving at \eqref{E3.69} in the limit.

(ii) Since $u\in W^1(0,T;H^s_\#,H^{s'}_\#)$ and $v\in W^1(0,T;H^{-s'}_\#,H^{-s}_\#)$, the dual products under the integral in \eqref{E3.30} are bounded in $L_1(0,T)$ and hence the integral is well defined. 
On the other hand, Theorem \ref{LM-T3.1} implies that $u$ and $v$ almost everywhere on $[0,T]$ equal to, respectively, functions   
$\tilde u\in \mathcal C^0([0,T];{H}_{\#}^{(s+s')/2})$ and $\tilde v\in \mathcal C^0([0,T];{H}_{\#}^{-(s+s')/2})$.
Then the traces $u(t), v(t), u(0),  v(0)$ are defined as the corresponding values of $\tilde u$ and $\tilde v$, implying that the dual products in the last two terms in \eqref{E3.30} are well defined.
Further in the proof we redefine $u$ and $v$ on a set of measure zero in $[0,T]$ as the functions $\tilde u$ and $\tilde v$, respectively.

There exists a sequence of infinitely differentiable functions $\{v_k\}$   from $[0, T]$ onto $H^{-s'}_\#$, such that
$v_k\to v \text{ in } W^1(0,T;H^{-s'}_\#,H^{-s}_\#),$  $k \rightarrow \infty$.
For each $u_m$ and $v_k$, we have
\begin{align*} %\label{E3.71w}
&\left\langle u'_m(t), v_k(t)\right\rangle_{\T}+\left\langle u_m(t), v'_k(t)\right\rangle_{\T} = \partial_t\langle u_m(t),v_k(t)\rangle_\T,
\end{align*}
which after the integration in $t$ leads to
\begin{align*}
%\label{E5.1km}
\int_0^t\left[\langle {u}'_m (\tau),{  v}_k (\tau)\rangle _{\T }
+\langle {  u}_m (\tau),{  v}'_k (\tau)\rangle _{\T }\right]d\tau
=\left\langle {  u}_m(t),{  v}_k(t)\right\rangle_{\T }
-\left\langle {  u}_m(0),{  v}_k(0)\right\rangle _{\T }.
\end{align*}
Taking the limits as $m\to\infty$ and $k\to \infty$, we get \eqref{E3.30}.
\end{proof}

%\paragraph{Acknowledgement.}

\paragraph{Data availability statement}
This paper has no associated data.

\end{document}